\newcommand{\rright}{\right}
\newcommand{\lleft}{\left}
\newcommand{\rrVert}{\Vert}
\newcommand{\rrvert}{\vert}
\newcommand{\llVert}{\Vert}
\newcommand{\llvert}{\vert}
\def\argmin{\operatorname{\arg\min}}
\newcommand{\indic}{\mathbh{1}}
\newtheorem{Th}{Theorem}
\newtheorem{Prop}{Proposition}
\newtheorem{Lemma}{Lemma}
\newtheorem{Cor}{Corollary}
\newcommand{\E}{\mathbb{E}}
\renewcommand{\P}{\mathbb{P}}
\newcommand{\R}{\mathbb{R}}
\newcommand{\Q}{\mathbb{Q}}
\renewcommand{\L}[1]{\mathbb{L}_{#1}}
\newcommand{\e}{\varepsilon}
\newcommand{\p}{\varphi}
\newcommand{\red}[1]{\mathbf{#1}}
\newcommand{\proc}{\mathrm{proc}}
\newcommand{\eqref}[1]{(\ref{#1})}
\newcolumntype{d}[1]{D{.}{.}{#1}}
\def\sfrac#1#2{#1/#2}
\def\afrac#1#2{#1/(#2)}
\begin{document}
\begin{frontmatter}

\title{Lasso and probabilistic inequalities for multivariate point processes}
\runtitle{Lasso and probabilistic inequalities for multivariate point processes}

\begin{aug}
\author[1]{\inits{N.R.}\fnms{Niels Richard}~\snm{Hansen}\thanksref{1}\ead[label=e1]{Niels.R.Hansen@math.ku.dk}},
\author[2]{\inits{P.}\fnms{Patricia}~\snm{Reynaud-Bouret}\thanksref{2}\ead[label=e2]{Patricia.Reynaud-Bouret@unice.fr}}
\and
\author[3]{\inits{V.}\fnms{Vincent}~\snm{Rivoirard}\corref{}\thanksref{3}\ead[label=e3]{Vincent.Rivoirard@dauphine.fr}}
\address[1]{Department of Mathematical
Sciences, University of Copenhagen, Universitetsparken 5, 2100
Copenhagen, Denmark. \printead{e1}}
\address[2]{Univ. Nice Sophia Antipolis, CNRS, LJAD, UMR 7351, 06100
Nice, France.\\ \printead{e2}}
\address[3]{CEREMADE, CNRS-UMR 7534, Universit\'e Paris Dauphine, Place
Mar\'echal de Lattre de Tassigny, 75775 Paris Cedex 16,
France. INRIA Paris-Rocquencourt, projet Classic.\\ \printead{e3}}
\end{aug}

\received{\smonth{9} \syear{2012}}
\revised{\smonth{7} \syear{2013}}

%
\begin{abstract}
Due to its low computational cost, Lasso is an attractive
regularization method for high-dimensional statistical
settings. In this paper, we consider multivariate counting
processes depending on an unknown function parameter to be
estimated by linear combinations of a fixed dictionary. To
select coefficients, we propose an adaptive
$\ell_1$-penalization methodology, where data-driven
weights of the penalty are derived from new
Bernstein type inequalities for martingales. Oracle inequalities are
established under assumptions on the Gram matrix of the
dictionary. Nonasymptotic probabilistic results for
multivariate Hawkes processes are proven, which allows us
to check these assumptions by considering general
dictionaries based on histograms, Fourier or wavelet
bases. Motivated by problems of neuronal activity
inference, we finally carry out a simulation study for
multivariate Hawkes processes and compare our methodology
with the \textit{adaptive Lasso procedure} proposed by Zou in
(\textit{J. Amer. Statist. Assoc.} \textbf{101} (2006)
1418--1429). We observe an excellent behavior of our procedure. We rely on theoretical
aspects for the essential question of tuning our
methodology. Unlike adaptive Lasso of (\textit{J. Amer. Statist. Assoc.} \textbf{101} (2006)
1418--1429), our
tuning procedure is proven to be robust with respect to
all the parameters of the problem, revealing its potential
for concrete purposes, in particular in neuroscience.
\end{abstract}

%
\begin{keyword}
\kwd{adaptive estimation}
\kwd{Bernstein-type inequalities}
\kwd{Hawkes processes}
\kwd{Lasso procedure}
\kwd{multivariate counting process}
\end{keyword}

\end{frontmatter}

\section{Introduction}
The Lasso, proposed in \cite{Tib}, is a well-established method that
achieves sparsity of an
estimated parameter vector via $\ell_1$-penalization. In this paper, we
focus on using Lasso to select and estimate coefficients in the
basis expansion of intensity processes for multivariate point
processes.

Recent examples of applications of multivariate point processes
include the modeling of multivariate neuron spike data
\cite{Masud:2011,Pillow:2008}, stochastic kinetic modeling
\cite{Bowsher:2010} and the modeling of the distribution of ChIP-seq
data along the genome \cite{CSWH}. In the previous examples, the
intensity of a future occurrence of a point depends on the history of
all or some of the coordinates of the point processes, and it is of
particular interest to estimate this dependence. This can be achieved
using a parametric family of models, as in several of the papers
above. Our aim is to provide a nonparametric method based on the
Lasso.

The statistical properties of Lasso are particularly well understood
in the context of regression with i.i.d. errors or for density
estimation where a range of \emph{oracle inequalities} have been
established. These inequalities, now widespread in the literature,
provide theoretical error bounds that hold on events with a
controllable (large) probability. See, for instance,
\cite{bertin11:_adapt_dantiz,BRT,bunea4,bunea3,bunea2,bunea,geer}.
We refer the reader to \cite{Buhlmann:2011} for an excellent account
on many state-of-the-art results. One main challenge in this context
is to obtain as weak conditions as possible on the design -- or Gram
-- matrix. The other important challenge is to be able to provide an
$\ell_1$-penalization procedure that provides excellent performance
from both theoretical and practical points of view. Standard Lasso
proposed in \cite{Tib} and based on deterministic constant weights
constitutes a major contribution from the methodological point of view,
but underestimation due to its shrinkage nature may lead to poor practical
performance in some contexts. Alternative two step procedures have been
suggested to overcome this drawback (see \cite{Mein,vdGBZ,Zou}). Zou
in \cite{Zou} also discusses problems for standard Lasso to cope with
variable selection and consistency simultaneously. He overcomes these
problems by introducing nonconstant data-driven $\ell_1$-weights
based on preliminary consistent estimates.

\subsection{Our contributions}

In this paper, we consider an $\ell_1$-penalized least squares
criterion for the estimation of coefficients in the expansion of a
function parameter. As in \cite{bertin11:_adapt_dantiz,HMZ,vdGBZ,Zou}, we consider nonconstant data-driven weights. However the setup
is here that of multivariate point processes
and the function parameter that lives in a Hilbert space determines
the point process intensities.
Even in this unusual context, the least squares criterion also involves a
random Gram matrix as well, and in this respect, we present a fairly
standard oracle inequality
with a strong condition on this Gram matrix (see Theorem~\ref
{th:oracle} in Section~\ref{sec:Lasso}).

One major contribution of this article is to provide probabilistic
results that enable us to calibrate $\ell_1$-weights in the most
general setup (see Theorem~\ref{th:oraclewithd} in Section~\ref{sec:Lasso}).
This is naturally linked to sharp Bernstein type
inequalities for martingales. In the literature, those kinds of
inequalities generally provide upper bounds for the martingale that
are deterministic and unobservable \cite{SW,vdG}. To choose
data-driven weights, we need observable bounds. More recently, there
have been some attempts to use self-normalized processes in order to
provide more flexible and random upper bounds \cite{BT,penaa,Pena,DvZ}. Nevertheless, those bounds are usually not (completely)
observable when dealing with counting processes. We prove a result
that goes further in this direction by providing a completely sharp
random observable upper bound for the martingale in our counting
process framework (see Theorem~\ref{th:weakBer} in Section~\ref{sec:Bernstein}).

The second main contribution is to provide a quite theoretical and
abstract framework to deal with processes whose intensity is (or is
well approximated by) a linear transformation of deterministic
parameters to infer.
This general framework also allows for different asymptotics in terms
of the number of observed processes or in terms of the duration of the
recording of observations, according to the setup. We focus in this
paper on three main examples: the Poisson model, the Aalen
multiplicative intensity model and the multivariate Hawkes process,
but many other situations can be expressed in the present framework,
which has the advantage of full flexibility. The first two examples
have been extensively studied in the literature as we detail
hereafter, but Hawkes processes are typical of situations where very little
is known from a nonparametric point of view, and where fully
implementable adaptive methods do not exist until the present work, to
the best of our knowledge. They also constitute processes that are
often used
in practice -- in particular in neuroscience -- as explained below.

It is also notable that we, in
each of these three previous examples, can verify explicitly if the
strong condition
on the Gram matrix mentioned previously is fulfilled with probability
close to 1 (see Section~\ref{sec:oraclegeneral} for the Poisson and Aalen cases and Section~\ref{sec:hawkes}
for the Hawkes case). For
the multivariate Hawkes process, this involves novel probabilistic
inequalities. Even though the Hawkes processes have been studied
extensively in the literature, see \cite{BM,DVJ}, very little is known
about exponential inequalities and nonasymptotic tail
control. Besides the univariate case \cite{RBRoy},
no exponential inequality controlling the number of points
per interval is known to us. We derive such results and other sharp
controls on the convergence in the ergodic theorem to obtain
control on the Gram matrix.

Finally, we carry out a simulation study in Section~\ref{sec:simu} for the most
intricate process, namely the multivariate Hawkes process, with a main
aim: to convince practitioners, for instance in neuroscience, that
this method is indeed fully implementable and gives good results in
practice. Data-driven weights for practical purposes are slight
modifications of theoretical ones. These modifications essentially aim
at reducing the number of tuning parameters to one. Due to
nonnegligible shrinkage that is unavoidable, in particular for large
coefficients, we propose a two step procedure where estimation of
coefficients is handled by using ordinary least squares estimation on
the support preliminary determined by our Lasso methodology. Tuning
issues are extensively investigated in our simulation study, and
Table~\ref{tableN2} in Section~\ref{results} shows that our
methodology can easily and robustly be tuned by using limit values
imposed by assumptions of Theorem~\ref{th:oraclewithd}. We naturally
compare our procedure with \textit{adaptive Lasso} of \cite{Zou} for
which weights are proportional to the inverse of ordinary least
squares estimates. The latter is very competitive for estimation
aspects since shrinkage becomes negligible if the preliminary
OLS estimates are large. But adaptive Lasso does not incorporate
random fluctuations of coefficient estimators. So it is most of the
time outperformed by our procedure. In particular, tuning adaptive
Lasso in the Hawkes setting is a difficult task, which cannot be
tackled by using standard cross-validation. Our simulation study shows
that the performance of adaptive Lasso is very sensitive to the choice
of the tuning
parameter. 
Robustness with respect to tuning is another advantage of our method
over adaptive Lasso. For simulations, the framework of neuronal
networks is used. Our short study proves that our methodology can be
used for solving concrete problems in neuroscience such as the
inference of functional connectivity graphs.

\subsection{Multivariate counting process}\label{general}
The framework introduced here and used throughout the paper aims at
unifying several situations, making the reading easier. Main examples
are then shortly described, illustrating the use of this setup.

We consider an $M$-dimensional counting process
$(N_t^{(m)})_{m=1,\ldots,M}$, which can also be seen as a random point
measure on $\R$ with marks in $\{1, \ldots, M\}$, and corresponding
\emph{predictable}
intensity processes $(\lambda_t^{(m)})_{m = 1, \ldots, M}$ under a
probability measure $\mathbb{P}$ (see \cite{Bre} or \cite{DVJ} for
precise definitions).

Classical models assume that the intensity $\lambda_t^{(m)}$ can be
written as
a \emph{linear predictable transformation} of a deterministic function
parameter $f_0$ belonging to a Hilbert space $\mathcal{H}$ (the
structure of $\mathcal{H}$, and then of $f_0$, will differ according to
the context, as illustrated below). We denote this
linear transformation by
%
\begin{equation}
\label{defpsi} \psi(f) = \bigl(\psi^{(1)}(f),\ldots,\psi^{(M)}(f)
\bigr).
\end{equation}
Therefore, for classical models, for any $t$,
%
\begin{equation}
\label{egalite} \lambda^{(m)}_t = \psi^{(m)}_t(f_0).
\end{equation}
The main goal in classical settings is to estimate $f_0$ based on observing
$(N_t^{(m)})_{m=1,\ldots,M}$ for $t\in[0,T]$. Actually, we do not
require in Theorems~\ref{th:oracle} and \ref{th:oraclewithd} that (\ref{egalite}) holds. Our aim is
mainly to furnish an estimate of the best linear approximation $\psi^{(m)}
_t(f_0)$ of the underlying intensity $\lambda^{(m)}_t$. 

Let us illustrate the general setup with three main examples: First,
the case
with i.i.d. observations of an inhomogeneous Poisson process on
$[0,1]$ and unknown intensity, second, the well known Aalen
multiplicative intensity model and third,
the central example of the multivariate Hawkes process. For the first
two models, asymptotics is with respect to $M$ ($T$ is fixed). For the
third one, $M$ is fixed and asymptotics is with respect to $T$.

\subsubsection{The Poisson model}\label{sec:PoiIntro}
Let us start with a very simple example which will be somehow a toy
problem here compared to the other examples.
In this example, we take $T = 1$ and assume that we observe $M$
i.i.d. Poisson processes on $[0,1]$ with common intensity
$f_0\dvtx [0,1]\longmapsto\R_+$. Asymptotic properties are obtained when
$M$ tends to infinity. In this case, the intensity $\lambda^{(m)}$ of the
$m$th process $N^{(m)}$ is $f_0$, which does not depend on $m$: Therefore,
for any $m\in\{1,\ldots,M\}$ and any $t$, we set
\[
\psi^{(m)}_t(f_0):=f_0(t),
\]
and $ \mathcal{H} = \L{2}([0,1])$ is equipped with the classical norm
defined by
\[
\llVert f \rrVert = \biggl(\int_0^1
f^2(t)\,\mathrm{d}t \biggr)^{1/2}.
\]
%
This framework has already been extensively studied from an adaptive
point of view: see
for instance \cite{Reynaud-Bouret,WN} for model selection methods,
\cite{RBR} for wavelet thresholding estimation or \cite{RBTMRG} for
kernel estimates. In this context, our present general result matches
with existing minimax adaptation results where asymptotics is with
respect to $M$.
\subsubsection{The Aalen multiplicative intensity model}\label{sec:AalIntro}
This is one of the most popular counting processes because of its
adaptivity to various situations (from Markov models to censored
lifetime models) and its various applications to biomedical data (see
\cite{andersen}). Given $\mathcal{X}$ a Hilbert space,
we consider $f_0\dvtx [0,T]\times\mathcal{X}\longmapsto\R_+$, and we set for
any $t\in\R$,
\[
\lambda^{(m)}_t = \psi^{(m)}_t(f_0):=
f_0\bigl(t,X^{(m)}\bigr)Y_t^{(m)},
\]
where $Y^{(m)}$ is an observable predictable process and $X^{(m)}$
represents covariates. In this case, $\mathcal{H}=\L{2}([0,T]\times
\mathcal{X})$. Our goal is to estimate $f_0$ and not to select
covariates. So, to fix ideas one sets $\mathcal{X}=[0,1]$ and
$T=1$. Hence $\mathcal{H}$ can be identified with
$\L{2}([0,1]^2)$. For right-censored data, $f_0$ usually represents
the hazard rate. The presence of covariates in this pure nonparametric
model is the classical generalization of the semi-parametric
model proposed by Cox (see \cite{Letue}, for instance). Note that the
Poisson model is a special case of the Aalen model.

The classical framework consists in assuming that $(X^{(m)}, Y^{(m)},
N^{(m)})_{m=1,\ldots,M}$ is an i.i.d. $M$-sample and as for the
Poisson model, it is natural to investigate asymptotic properties when
$M\to+\infty$. If there are no covariates, several adaptive approaches
already exist: See \cite{BCa,BCb,RB} for various penalized
least-squares contrasts and \cite{chagny} for kernel methods in
special cases of censoring. In the presence of covariates, one can
mention \cite{aalen,andersen} for a parametric approach,
\cite{CGG,Letue} for a model selection approach and \cite{GG} for a
Lasso approach. Let us also cite \cite{BmK} where covariate selection
via penalized MLE has been studied. Once again, our present general
result matches with existing oracle results. In \cite{chagny},
exponential control of random fluctuations leading to adaptive
results are derived without using the martingale theory. In more
general frameworks (as in \cite{CGG}, for instance), martingales are
required and this even when i.i.d. processes are involved.
\subsubsection{Hawkes processes}\label{sec:IntroHawkes}
Hawkes processes are the point processes equivalent to autoregressive
models. In seismology, Hawkes processes can model earthquakes and
their aftershocks \cite{VJO}. More recently they have been used to
model favored or avoided distances between occurrences of motifs
\cite{GS} or Transcription Regulatory Elements \cite{CSWH} on the
DNA.\vadjust{\goodbreak} We can also mention the use of Hawkes processes as models of
social interactions \cite{MC} or financial phenomena \cite{BDHM}.

In the univariate setting, with $M=1$, the intensity of a nonlinear
Hawkes process $(N_t)_{t>0}$ is given by
\[
\lambda_t=\phi \biggl(\int_{-\infty}^{t-}h(t-u)
\,\mathrm{d}N_u \biggr),
\]
where $\phi\dvtx \R\mapsto\R_+$ and $h\dvtx \R_+\mapsto\R$ (see \cite{BM}). A
particular case is Hawkes's self exciting point process, for which $h$
is nonnegative and $\phi(x)=\nu+x$ where $\nu>0$ (see
\cite{BM,DVJ,hawkes}). For instance, for seismological purposes, $\nu$
represents the spontaneous occurrences of real original
earthquakes. The function $h$ models self-interaction: after a shock
at time $u$, we observe an aftershock at time $t$ with large
probability if $h(t-u)$ is large.

These notions can be easily extended to the multivariate setting and in
this case the intensity of $N^{(m)}$ takes the form:
\[
\lambda_t^{(m)}=\phi^{(m)} \Biggl(\sum
_{\ell=1}^M\int_{-\infty}^{t-}h_\ell^{(m)}
(t-u)\,\mathrm{d}N^{(\ell)} (u) \Biggr).
\]
Theorem~7 of \cite{BM} gives conditions on the functions $\phi^{(m)}$
(namely Lipschitz properties) and on the functions $h_\ell^{(m)}$ to obtain
existence and uniqueness of a stationary version of the associated
process. Throughout this paper, we assume that for any $m\in\{1,\ldots
,M\}$,
\[
\phi^{(m)}(x)=\bigl(\nu^{(m)}+x\bigr)_+,
\]
where $\nu^{(m)}>0$ and $(\cdot)_+$ denotes the positive part. Note that in
\cite{CSWH,chornoboy}, the case $\phi^{(m)}(x)=\exp(\nu^{(m)}+x)$ was studied.
However, Lipschitz properties required in \cite{BM} are not satisfied
in this case. By introducing, as previously, the linear predictable
transformation $\psi(f) = (\psi^{(1)}(f),\ldots,\psi^{(M)}(f))$
with for any $m$ and any $t$
%
\begin{equation}
\label{eq:intensity} \psi^{(m)}_t(f_0):=
\nu^{(m)}+ \sum_{\ell=1}^M \int
_{-\infty}^{t-}h_\ell ^{(m)}(t-u)
\,\mathrm{d}N^{(\ell)} (u),
\end{equation}
with $f_0=(\nu^{(m)}, (h_\ell^{(m)})_{\ell=1,\ldots,M})_{m=1,\ldots,M}$,
we have $\lambda_t^{(m)}=(\psi^{(m)}_t(f_0))_+$. Note that the upper
integration limits in
(\ref{eq:intensity}) are $t-$, that is, the integrations are all over the
open interval $(-\infty, t)$. This assures predictability of the intensity
disregarding the values of $h_\ell^{(m)}(0)$. Alternatively, it can be
assumed throughout that $h_\ell^{(m)}(0) = 0$, in which case the
integrals in (\ref{eq:intensity}) can be over $(-\infty,t]$ without
compromising predictability.
The parameter $\nu^{(m)}$ is called the \textit{spontaneous rate}, whereas
the function $h_\ell^{(m)}$ is called the \textit{interaction function}
of $N^{(\ell)}$ on $N^{(m)}$. The goal is to estimate $f_0$ by using Lasso
estimates. In the sequel, we will assume that the support of $h_\ell
^{(m)}$ is bounded. By
rescaling, we can then assume that the support is in $[0,1]$, and we
will do so throughout. Note
that in this case we will need to observe the process on $[-1,T]$ in
order to compute $\psi^{(m)}_t(f_0)$ for $t \in[0,T]$. The Hilbert space
$\mathcal{H}$ associated with this setup is
\begin{eqnarray*}
\mathcal{H}&=&\bigl(\R\times\L{2}\bigl([0,1]\bigr)^M
\bigr)^M\\
&=& \biggl\{f= \bigl( \bigl(\mu^{(m)},
\bigl(g_\ell^{(m)}\bigr)_{\ell=1,\ldots,M}\bigr)_{m=1,\ldots,M}
\bigr):
\\
&&\hphantom{\biggl\{}g_\ell^{(m)} \mbox{ with support in } [0,1] \mbox{
and }\llVert f \rrVert ^2= \sum_m
\bigl(\mu^{(m)}\bigr)^2 + \sum_m
\sum_\ell \int_0^1
g_\ell^{(m)}(t)^2 \,\mathrm{d}t <\infty \biggr\}.
\end{eqnarray*}
Some theoretical results are established in this general setting but to
go further, we shall consider in Section~\ref{sec:hawkes} the case
where the functions $h_\ell^{(m)}$ are nonnegative and then $\lambda_t^{(m)}$
is a linear function of $f_0$, as in Sections~\ref{sec:PoiIntro} and
\ref{sec:AalIntro}:
\[
\lambda_t^{(m)}=\psi^{(m)}_t(f_0).
\]
The multivariate point process associated with this setup is called
the \textit{multivariate Hawkes self exciting point process} (see \cite
{hawkes}). In this
example, $M$ is fixed and asymptotic properties are obtained when $T$
tends to infinity.

To the best of our knowledge, nonparametric estimation for Hawkes
models has
only been proposed in \cite{RBS} in the univariate setting where the
method is based on $\ell_0$-penalization of the least-squares
contrast. However, due to $\ell_0$-penalization, the criterion is not
convex and the computational cost, in particular for the memory
storage of all the potential estimators, is huge. Therefore, this
method has never been adapted to the multivariate setting. Moreover, the
penalty term in this method is not data-driven and ad-hoc tuning
procedures have been used for simulations. This motivates the present
work and the use of a convex Lasso criterion combined with data-driven
weights, to provide a fully implementable and theoretically valid
data-driven method, even in the multivariate case.

\subsubsection*{Applications in neuroscience} Hawkes processes can
naturally be applied to model neuronal activity. Extracellular action
potentials can be recorded by electrodes and the recorded data for the
neuron $m$ can be seen as a point process, each point corresponding to
the peak of one action potential of this neuron (see \cite{brette}, for
instance, for precise definitions). When $M$ neurons are simultaneously
recorded, one can assume that we are faced with a realization of $N=(N^{(m)}
)_{m=1,\ldots,M}$ modeled by a multivariate Hawkes process.
We then assume that
the intensity associated with the activity of the neuron $m$ is given
by $\lambda_t^{(m)}=(\psi^{(m)}_t(f_0))_+$, where $\psi^{(m)}_t(f_0)$ is given in
(\ref{eq:intensity}). 
At any occurrence $u<t$ of $N^{(\ell)}$, $\psi^{(m)}_t(f_0)$ increases
(excitation) or decreases (inhibition) according to the sign of $h_\ell
^{(m)}(t-u)$. Modeling inhibition is essential from the neurobiological
point of view. So, we cannot assume that all interaction functions are
nonnegative, and we cannot omit the positive part. More details are
given in Section~\ref{sec:simu}.

In neuroscience, Hawkes processes combined with maximum likelihood
estimation have been used in the seminal paper \cite{chornoboy}, but
the application of the method requires a 
too huge number of observations for realistic practical purposes.
Models based on Hawkes processes have nevertheless been recently
discussed in neuroscience, since they constitute one of the few simple
models able to produce dependence graphs between neurons, that may be
interpreted in neuroscience as functional connectivity graphs \cite
{PSCR,PSCR2}. However, many nonparametric statistical questions arise
that are not solved yet in order to furnish a fully applicable tool for
real data analysis \cite{KRS}. We think that the Lasso-based
methodology presented in this paper may furnish the first robust tool
in this direction.
\subsection{Notation and overview of the paper}
Some notation from the general theory of stochastic integration is
useful to simplify the otherwise quite heavy notation.
If $H=(H^{(1)},\ldots,H^{(M)})$ is a multivariate process with locally
bounded coordinates, say, and $X=(X^{(1)},\ldots,X^{(M)})$ is a
multivariate semi-martingale, we define the real valued process
$H\bullet X$ by
\[
H\bullet X_t :=\sum_{m=1}^M
\int_0^t H_s^{(m)}
\,\mathrm{d}X_s^{(m)}.
\]
Given $\mathfrak{F}\dvtx  \R\longmapsto\R$ we use $\mathfrak{F}(H)$ to
denote the
coordinatewise application of $\mathfrak{F}$, that is $\mathfrak{F}(H) =
(\mathfrak{F}(H^{(1)}),\ldots,\mathfrak{F}(H^{(M)}))$. In particular,
\[
\mathfrak{F}(H)\bullet X_t =\sum_{m=1}^M
\int_0^t \mathfrak {F}\bigl(H_s^{(m)}
\bigr) \,\mathrm{d}X_s^{(m)}.
\]
We also define the following scalar product on the space of
multivariate processes. For any multivariate processes
$H=(H^{(1)},\ldots,H^{(M)})$ and $K=(K^{(1)},\ldots,K^{(M)})$, we set
\[
\langle H,K\rangle_{\proc}:=\sum_{m=1}^M
\int_0^T H^{(m)}_s
K^{(m)}_s \,\mathrm{d}s,
\]
the corresponding norm being denoted $\llVert  H  \rrVert _{\proc}$.
Since $\psi$ introduced in (\ref{defpsi}) is linear, the Hilbert space
$\mathcal{H}$ inherits a bilinear form from the previous scalar
product, that we denote, for all $f,g$ in~$\mathcal{H}$,
\[
\langle f, g\rangle_T := \bigl\langle \psi(f),\psi(g)\bigr
\rangle_{\proc} = \sum_{m=1}^M
\int_0^T \psi^{(m)}_s(f)
\psi^{(m)}_s(g) \,\mathrm{d} s,
\]
and the corresponding quadratic form is denoted $\llVert  f  \rrVert _T^2$.

The compensator $\Lambda=(\Lambda^{(m)})_{m=1,\ldots,M}$ of
$N=(N^{(m)})_{m=1,\ldots,M}$ is finally defined for all $t$ by
\[
\Lambda^{(m)}_t=\int_0^t
\lambda^{(m)}_s \,\mathrm{d}s.
\]

Section~\ref{sec:Lasso} gives our main oracle inequality and the choice
of the $\ell_1$-weights in the general framework of counting processes.
Section~\ref{sec:Bernstein} provides the fundamental Bernstein-type
inequality. Section~\ref{sec:oraclegeneral} details the meaning of the
oracle inequality in the Poisson and Aalen setups. The probabilistic
results needed for the Hawkes processes as well as the interpretation
of the oracle inequality in this framework is done in Section~\ref{sec:hawkes}. Simulations on multivariate Hawkes processes are
performed in Section~\ref{sec:simu}. The last Section is dedicated to
the proofs of our results.

\section{Lasso estimate and oracle inequality}\label{sec:Lasso}
We wish to estimate the true underlying intensity so our main goal
consists in estimating the parameter $f_0$. For this purpose, we assume
we are given $\Phi$ a dictionary of functions (whose cardinality is
denoted $\llvert  \Phi \rrvert $) and we define $f_a$ as a linear
combination of the functions of $\Phi$, that is,
\[
f_a:=\sum_{\varphi\in\Phi} a_{\varphi}
\varphi,
\]
where $a=(a_\varphi)_{\varphi\in\Phi}$ belongs to $\R^{\Phi}$. 
Then, since $\psi$ is linear, we get
\[
\psi(f_a)=\sum_{\varphi\in\Phi} a_{\varphi}
\psi(\varphi).
\]
We use the following least-squares contrast $\mathcal{C}$ defined on
$\mathcal{H}$ by
%
\begin{equation}
\label{contrast} \mathcal{C}(f):=-2 \psi(f) \bullet N_T + \llVert f
\rrVert _T^2, \qquad \forall f\in \mathcal{H}.
\end{equation}
This contrast, or some variations of $\mathcal{C}$, have already been
used in particular setups (see, for instance, \cite{RBS} or \cite{GG}).
The main heuristic justification lies in following arguments. Since
$\psi(f)$ is a predictable process, the compensator at time $T$ of
$\mathcal{C}(f)$ is given by
\[
\tilde{\mathcal{C}}(f):=-2 \psi(f) \bullet\Lambda_T + \llVert f
\rrVert _T^2,
\]
which can also be written as $\tilde{\mathcal{C}}(f)= -2 \langle \psi
(f),\lambda\rangle _{\proc} + \llVert  \psi(f)  \rrVert _{\proc}^2$. Note that $\tilde
{\mathcal{C}}$ is minimum when $\llVert  \psi(f)-\lambda  \rrVert _{\proc}$ is
minimum. If $\lambda=\psi(f_0)$ and if $\llVert  \cdot  \rrVert _T$ is a norm on the
Hilbert space $\mathcal{H,}$ then the unique minimizer of $\tilde
{\mathcal{C}}$ is $f_0$. Therefore, to get the best linear
approximation of $\lambda$ of the form $\psi(f)$, it is natural to look
at minimizers of $\mathcal{C}(f)$.
Restricting to linear combinations of functions of $\Phi$, since $\psi$
is linear, we obtain
\[
\mathcal{C}(f_a)=-2 a' b + a'G a,
\]
where $a'$ denotes the transpose of the vector $a$ and for $\varphi_1,
\varphi_2 \in\Phi$,
%
\begin{equation}
\label{bbG} b_{\varphi_1}=\psi(\varphi_1)\bullet
N_T,\qquad  G_{\varphi_1,\varphi_2}= \langle \varphi_1,
\varphi_2\rangle _T.
\end{equation}
Note that both the vector $b$ of dimension $\llvert  \Phi \rrvert $ and the Gram matrix
$G$ of dimensions $\llvert  \Phi \rrvert \times\llvert  \Phi \rrvert $ are random but
nevertheless observable.

To estimate $a$ we minimize the contrast, $\mathcal{C}(f_a)$, subject
to an
$\ell_1$-penalization on the $a$-vector. That is, we introduce the
following $\ell_1$-penalized estimator
%
\begin{equation}
\label{Lasso} \hat a\in\mathop{\argmin}\limits_{a\in\R^\Phi}\bigl\{-2a'b+
a'G a+ 2d'\llvert a \rrvert \bigr\},
\end{equation}
where $\llvert  a \rrvert =(\llvert  a_\varphi \rrvert )_{\varphi\in\Phi}$ and $d \in\R_+^\Phi$. With
a good choice of $d$ the solution of (\ref{Lasso}) will achieve both
sparsity and good statistical properties. Finally, we let $\hat
f=f_{\hat a}$ denote the Lasso estimate associated with $\hat a$.

Our first result establishes theoretical properties of $\hat f$ by
using the classical oracle approach. More precisely, we establish a
bound on the risk of $\hat f$ if some conditions are true. This is a
nonprobabilistic result that only relies on the definition of $\hat
{a}$ by
(\ref{Lasso}). In the next section we will
deal with the probabilistic aspect, which is to prove that the
conditions are fulfilled with large probability.

\begin{Th}\label{th:oracle}
Let $c>0$. If
%
\begin{equation}
\label{normlower} G\succeq c I
\end{equation}
and if for all $\varphi\in\Phi$
%
\begin{equation}
\label{Coeffcontrol} \llvert b_\varphi-\bar b_\varphi \rrvert \leq
d_\varphi,
\end{equation}
where
\[
\bar b_\varphi=\psi(\varphi)\bullet\Lambda_T,
\]
then there exists an absolute constant $C$, independent of $c$, such that
%
\begin{equation}
\label{oraclerand} \bigl\llVert \psi(\hat{f})-\lambda \bigr\rrVert
_{\proc}^2\leq C\inf_{a\in\R^{\Phi}} \biggl\{\bigl
\llVert \lambda-\psi(f_a) \bigr\rrVert _{\proc}^2
+ c^{-1}\sum_{\varphi\in
S(a)}d_\varphi^2
\biggr\},
\end{equation}
where $S(a)$ is the support of $a$.
If $\lambda=\psi(f_0)$, the oracle inequality (\ref{oraclerand}) can
also be rewritten as
%
\begin{equation}
\label{oracle} \llVert \hat{f}-f_0 \rrVert _{T}^2
\leq C\inf_{a\in\R^{\Phi}} \biggl\{\llVert f_0-f_a
\rrVert _{T}^2 + c^{-1}\sum
_{\varphi\in S(a)}d_\varphi^2 \biggr\}.
\end{equation}
\end{Th}

The proof of Theorem~\ref{th:oracle} is given in
Section~\ref{sec:proof-oracle}. Note that assumption \eqref{normlower}
ensures that $G$ is invertible and then coordinates of $\hat a$ are
finite almost surely.
Assumption \eqref{normlower} also ensures that $\llVert  f  \rrVert _T$ is a real
norm on $f$ at least when $f$ is a linear combination of the functions
of $\Phi$.

Two terms are involved on the right-hand sides of (\ref{oraclerand})
and \eqref{oracle}. The first one is an approximation term
and the second one can be viewed as a variance term providing a control
of the random fluctuations of the $b_\p$'s around the $\bar
b_\p$'s. Note that $b_\p-\bar b_\p=\psi(\varphi)\bullet(N-\Lambda)_T$
is a martingale (see also the comments after
Theorem~\ref{th:oraclewithd} for more details). The approximation term
can be small but the price to pay may be a large support of $a$,
leading to large values for the second term. Conversely, a sparse
$a$ leads to a small second term. But in this case the
approximation term is potentially larger. Note that if the function
$f_0$ can be approximated by a sparse linear combination of the
functions of $\Phi$, then we obtain a sharp control of $\llVert  \hat
f-f_0 \rrVert _{T}^2$. In particular, if $f_0$ can be decomposed on the
dictionary, so we can write $f_0=f_{a_0}$ for some $a_0\in\R^\Phi$,
then (\ref{oracle}) gives
\[
\llVert \hat{f}-f_0 \rrVert _{T}^2\leq C
c^{-1}\sum_{\varphi\in S(a_0)}d_\varphi^2.
\]
In this case, the right-hand side can be viewed as the sum of the
estimation errors made by estimating the components of $a_0$.

Such oracle inequalities are now classical in the huge literature of
Lasso procedures. See, for instance, \cite{bertin11:_adapt_dantiz,BRT,bunea4,bunea3,bunea2,bunea,KLT,geer}, who established oracle
inequalities in the same spirit as in Theorem~\ref{th:oracle}. We
bring out the paper \cite{CT}, which gives technical and heuristic
arguments for justifying optimality of such oracle inequalities (see
Section~1.3 of \cite{CT}). Most of these papers deal with independent data.

In the sequel, we prove that assumption
\eqref{normlower} is satisfied with large probability by using the
same approach as \cite{RV2,RV3} and to a lesser extent as Section~2.1
of \cite{CT} or \cite{RV1}. Section~\ref{sec:hawkes} is in particular mainly
devoted to show that \eqref{normlower} holds with large probability for
the multivariate Hawkes processes.

For Theorem~\ref{th:oracle} to be of interest, the condition on the
martingale, condition \eqref{Coeffcontrol}, needs to hold with large
probability as well. From this control, we deduce convenient
data-driven $\ell_1$-weights that are the key parameters of our
estimate. Note that our estimation procedure does not depend on the
value of $c$ in \eqref{normlower}. So knowing the latter is not
necessary for implementing our procedure. Therefore, one of the main
contributions of the paper
is to provide new sharp concentration inequalities that are satisfied by
multivariate point processes. This is the main goal of
Theorem~\ref{th:weakBer} in Section~\ref{sec:Bernstein} where we
establish Bernstein type inequalities for martingales. We apply it to
the control of \eqref{Coeffcontrol}. This allows us to derive the
following result, which specifies the choice of the $d_\p$'s needed to
obtain the oracle inequality with large probability.

\begin{Th}
\label{th:oraclewithd}
Let $N=(N^{(m)})_{m=1,\ldots,M}$ be a multivariate counting process with
predictable intensities $\lambda^{(m)}_t$ and almost surely finite
corresponding compensator $\Lambda^{(m)}_t$. 
Define
\[
\Omega_{V,B}= \Bigl\{ \mbox{for any } \p\in\Phi, \sup
_{t\in
[0,T],m}\bigl\llvert \psi^{(m)}_t(\p) \bigr
\rrvert \leq B_\p\mbox{ and } \bigl(\psi(\p)\bigr)^2
\bullet N_T \leq V_\p \Bigr\},
\]
for positive deterministic constants $B_\p$ and $V_\p$ and
\[
\Omega_c= \{ G\succeq cI \}.
\]
Let $x$ and $\e$ be strictly positive constants and define for all $\p
\in\Phi$,
%
\begin{equation}
\label{d} d_\p=\sqrt{2(1+\e)\hat V_\p^\mu
x} + \frac{B_\p x}{3},
\end{equation}
with
\[
\hat{V}_\p^\mu= \frac{\mu}{\mu-\phi(\mu)} \bigl(\psi(\p)
\bigr)^2\bullet N_T + \frac{B_\p^2x}{\mu-\phi(\mu)}
\]
for a real number $\mu$ such that $\mu>\phi(\mu)$, where $\phi(u)=\exp(u)-u-1$.
Let us consider the Lasso estimator $\hat f$ of $f_0$ defined in
Section~\ref{sec:Lasso}.
Then, with probability larger than
\[
1-4\sum_{\p\in\Phi} \biggl(\frac{\log (1+\afrac{\mu V_\p}{B_\p^2
x} )}{\log(1+\e)} +1
\biggr)\mathrm{e}^{-x}-\P\bigl(\Omega_{V,B}^c\bigr)- \P
\bigl(\Omega_c^c\bigr),
\]
inequality (\ref{oracle}) is satisfied, that is,
\[
\bigl\llVert \psi(\hat{f})-\lambda \bigr\rrVert _{\proc}^2
\leq C\inf_{a\in\R^{\Phi}} \biggl\{\bigl\llVert \lambda-
\psi(f_a) \bigr\rrVert _{\proc}^2 +
c^{-1}\sum_{\p\in S(a)}d_\p
^2 \biggr\}.
\]
If moreover $\lambda=\psi(f_0)$, then
\[
\llVert \hat{f}-f_0 \rrVert _{T}^2\leq C
\inf_{a\in\R^{\Phi}} \biggl\{\llVert f_0-f_a
\rrVert _{T}^2 + c^{-1}\sum
_{\p\in S(a)}d_\p^2 \biggr\},
\]
where $C$ is a constant independent of $c$, $\Phi$, $T$ and $M$.
\end{Th}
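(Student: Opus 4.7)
The plan is to combine Theorem~\ref{th:oracle} with Theorem~\ref{th:weakBer} and a union bound over the dictionary. By Theorem~\ref{th:oracle}, the oracle inequality (\ref{oracle}) follows as soon as (\ref{normlower}) and (\ref{Coeffcontrol}) both hold. The first event is exactly $\Omega_c$, which contributes $\P(\Omega_c^c)$ to the failure probability. The remaining task is to show that (\ref{Coeffcontrol}) holds simultaneously in $\varphi$ on $\Omega_{V,B}$ up to a failure probability of $4\sum_{\varphi}(\log(1+\mu V_\varphi/(B_\varphi^2 x))/\log(1+\varepsilon) + 1)e^{-x}$.

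For each fixed $\varphi \in \Phi$, I plan to identify $b_\varphi - \bar b_\varphi = \psi(\varphi) \bullet (N-\Lambda)_T$ as a purely discontinuous martingale. On $\Omega_{V,B}$, its jumps are bounded in absolute value by $B_\varphi$, and its optional (observable) quadratic variation at time $T$ equals $(\psi(\varphi))^2 \bullet N_T \leq V_\varphi$. Theorem~\ref{th:weakBer} is then applied to this martingale; it is precisely designed to deliver a Bernstein-type bound of the form $\sqrt{2(1+\varepsilon)\hat V_\varphi^\mu x} + B_\varphi x/3 = d_\varphi$, expressed through the observable proxy $\hat V_\varphi^\mu$ rather than through the unobservable predictable quadratic variation $\langle \psi(\varphi) \bullet (N-\Lambda) \rangle_T = (\psi(\varphi))^2 \bullet \Lambda_T$. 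The shape of $\hat V_\varphi^\mu$ with its coefficients $\mu/(\mu - \phi(\mu))$ and $B_\varphi^2 x/(\mu - \phi(\mu))$ arises from the elementary inequality $e^{\mu u} - 1 - \mu u \leq \phi(\mu) u^2$ applied jumpwise, which, thanks to the bounded-jump assumption and the condition $\mu > \phi(\mu)$, can be solved for $\langle M \rangle_T$ in terms of $(\psi(\varphi))^2 \bullet N_T$ up to an exponentially unlikely remainder.

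To turn this into a fully data-driven deviation inequality, a peeling (slicing) argument is used over the range of $(\psi(\varphi))^2 \bullet N_T$ on $\Omega_{V,B}$: slicing the interval $[B_\varphi^2 x / \mu, V_\varphi]$ into geometric pieces of ratio $1+\varepsilon$ produces at most $\log(1+\mu V_\varphi/(B_\varphi^2 x))/\log(1+\varepsilon) + 1$ slices, on each of which a fixed-level Bernstein bound applies with two-sided probability $2e^{-x}$, the variance inflation $1+\varepsilon$ absorbing the slice width; a union bound over slices and over the two directions of deviation accounts for the factor $4$. A further union bound over $\varphi \in \Phi$, intersected with $\Omega_c \cap \Omega_{V,B}$, yields the probability in the statement, after which Theorem~\ref{th:oracle} produces (\ref{oracle}) on that event. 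The main obstacle is the transfer from the unobservable predictable quadratic variation to the observable jump quadratic variation $(\psi(\varphi))^2 \bullet N_T$, combined with the peeling needed to make the resulting bound data-driven; this is exactly what Theorem~\ref{th:weakBer} accomplishes, and once it is invoked the remaining steps are routine.
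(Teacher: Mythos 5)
Your high-level plan matches the paper's: apply Theorem~\ref{th:oracle} on the event $\Omega_c \cap \Omega_{V,B} \cap \{\forall\p,\ |b_\p-\bar b_\p|\le d_\p\}$, and establish the last event via Theorem~\ref{th:weakBer} (applied two-sidedly) together with a union bound over $\Phi$, with $w=B_\p^2x/(\mu-\phi(\mu))$ and $v=\mu V_\p/(\mu-\phi(\mu))+B_\p^2x/(\mu-\phi(\mu))$ giving $\log(v/w)=\log\bigl(1+\mu V_\p/(B_\p^2x)\bigr)$.

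However, there is a genuine gap: you invoke Theorem~\ref{th:weakBer} directly on the process $H=\psi(\p)$, but Theorem~\ref{th:weakBer} requires the almost-sure exponential integrability hypothesis (\ref{expcond}), namely $\exp(\xi H/B)\bullet\Lambda_t<\infty$ and $\exp(\xi H^2/B^2)\bullet\Lambda_t<\infty$ a.s.\ for all $\xi\in(0,3)$. The bound $\sup_{t,m}|\psi_t^{(m)}(\p)|\le B_\p$ holds only on the (random) event $\Omega_{V,B}$, not almost surely, so there is no a priori reason that (\ref{expcond}) holds for $H=\psi(\p)$ — for a general multivariate counting process the intensities and $\psi(\p)$ can be unbounded. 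The paper resolves this by introducing the stopping time $\tau'=\inf\{t\ge 0:\ \sup_m|\psi_t^{(m)}(\p)|>B_\p\}$ and applying Theorem~\ref{th:weakBer} to the \emph{truncated predictable} process $H_t^{(m)}=\psi_t^{(m)}(\p)\,\indic_{t\le\tau'}$, for which (\ref{expcond}) can be verified directly because $|H_s^{(m)}|\le B_\p$ for $s<\tau'$. One then observes that on $\Omega_{V,B}$ one has $\tau'\ge T$, so the truncated process agrees with $\psi(\p)$ on $[0,T]$, and $H\bullet(N-\Lambda)_T=b_\p-\bar b_\p$, $\hat V^\mu=\hat V_\p^\mu$. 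Without this truncation step the application of Theorem~\ref{th:weakBer} is not justified. A smaller imprecision: the factor $4$ comes from applying Theorem~\ref{th:weakBer} to both $H$ and $-H$ (a factor $2$) combined with the factor $2$ already present in the conclusion of Theorem~\ref{th:weakBer} (which stems from combining the martingale deviation bound with the substitution $H^2\bullet\Lambda_\tau\le\hat V^\mu$), not from "slices and two directions" as you describe.
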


The first oracle inequality gives a control of the difference between
the true intensity and $\psi(\hat{f})$. The equality $\lambda=\psi
(f_0)$ is not required and we can apply this result, for instance, with
$\lambda=(\psi(f_0))_+$.

Of course, the smaller the $d_\p$'s the better the oracle inequality.
Therefore, when $x$ increases, the probability bound and the $d_\p$'s
increase and we have to realize a compromise to obtain a
meaningful oracle inequality on an event with large probability. The
choice of $x$ is deeply discussed below, in Sections~\ref{sec:oraclegeneral} and \ref{sec:hawkes} for theoretical purposes and
in Section~\ref{sec:simu} for practical purposes.

Let us first discuss more deeply the definition of $d_\p$ (derived from
subsequent Theorem~\ref{th:weakBer}) which seems intricate. Up to a
constant depending on the choice of $\mu$ and $\e$, $d_\p$ is of same
order as $\max ( \sqrt{x(\psi(\p))^2\bullet N_T },B_\p x )$. To
give more insight on the values of $d_\p$, let us consider the very
special case where for any $m\in\{1,\ldots,M\}$ for any $s$,
$\psi_s^{(m)}(\p)=c_m1_{\{s\in A_m\}}$, where $c_m$ is a positive
constant and $A_m$ a compact set included into $[0,T]$. In this case,
by naturally choosing $B_\p=\max_{1\leq m\leq M} c_m$, we have:
\[
\sqrt{x\bigl(\psi(\p)\bigr)^2\bullet N_T }\geq
B_\p x\quad \iff\quad \sum_{m=1}^Mc_m^2N^{(m)}_{A_m}
\geq x\max_{1\leq m\leq M} c_m^2,
\]
where $N^{(m)}_{A_m}$ represents the number of points of $N^{(m)}$
falling in $A_m$. For more general vector functions $\psi(\p)$, the
term $\sqrt{x(\psi(\p))^2\bullet N_T }$ will dominate $B_\p x$ if the
number of points of the process lying where $\psi(\p)$ is large,
is significant. In this case, the leading
term in $d_\p$ is expected to be the quadratic term
$\sqrt{2(1+\e)\frac{\mu}{\mu-\phi(\mu)}x(\psi(\p))^2\bullet N_T }$ and
the linear
terms in $x$ can be viewed as residual terms. Furthermore, note that
when $\mu$ tends to $0$,
\[
\frac{\mu}{\mu-\phi(\mu)}=1+\frac{\mu}{2}+\mathrm{o}(\mu), \qquad \frac{x}{\mu-\phi(\mu
)}\sim
\frac{x}{\mu}\to+\infty
\]
since $x>0$. So, if $\mu$ and $\e$ tend to $0$, the quadratic term
tends to $\sqrt{2 x(\psi(\p))^2\bullet N_T }$, but the price to pay is
the explosion of the linear term in $x$. In any case, it is possible
to make the quadratic term as close to $\sqrt{2 x(\psi(\p))^2\bullet
N_T }$ as desired. Basically, this term cannot be improved (see
comments after Theorem~\ref{th:weakBer} for probabilistic arguments).

Let us now discuss the choice of $x$. In more classical contexts such
as density estimation based on an $n$-sample, the choice $x=\gamma\log
(n)$ plugged in the parameters analog to the $d_\p$'s is convenient,
since it both ensures a small probability bound and a meaningful order
of magnitude for the oracle bound (see \cite{bertin11:_adapt_dantiz}
for instance).
See also Sections~\ref{sec:oraclegeneral} and \ref{sec:hawkes} for similar evaluations in our setup. But it
has also been further established that the choice $\gamma=1$ is the
best. Indeed if the components of $d$ are chosen smaller than the
analog of $\sqrt{2 x(\psi(\p))^2\bullet N_T }$ in the density
framework, then the resulting estimator is definitely bad from the
theoretical point of view, but simulations also show that, to some
extent, if the components of $d$ are larger than the analog of $\sqrt{2
x(\psi(\p))^2\bullet N_T }$, then the estimator deteriorates too. A
similar result is out of reach in our setting, but similar conclusions
may remain valid here since density estimation often provides some
clues about what happens for more intricate heteroscedastic models. In
particular, the main heuristic justifying the optimality of this tuning
result in the density setting is that the quadratic term (and in
particular the constant $\sqrt{2}\,$) corresponds to the rate of the
central limit theorem and in this sense, it provides the ``best
approximation'' for the fluctuations. 
For further discussion, see the simulation study in Section~\ref{sec:simu}.

Finally, it remains to control $\P(\Omega_{V,B})$ and $\P(\Omega_c)$.
These are the goals of Section~\ref{sec:oraclegeneral} for Poisson and
Aalen models and Section~\ref{sec:hawkes} for multivariate Hawkes processes.

\section{Bernstein type inequalities for multivariate point
processes}\label{sec:Bernstein}
We establish a Bernstein type concentration inequality based on
boundedness assumptions. This result, which has an interest per se from
the probabilistic point of view, is the key result to derive the
convenient values for the vector $d$ in Theorem~\ref{th:oraclewithd}
and so is capital from the statistical perspective.

\begin{Th}
\label{th:weakBer}
Let $N=(N^{(m)})_{m=1,\ldots,M}$ be a multivariate counting process with
predictable intensities $\lambda^{(m)}_t$ and corresponding compensator
$\Lambda^{(m)}_t$ with respect to some given filtration.
Let $B>0$. Let $H=(H^{(m)})_{m=1,\ldots,M}$ be a multivariate predictable
process such that for all $\xi\in(0,3)$, for all~$t$,
%
\begin{equation}
\label{expcond} \exp(\xi H/B)\bullet\Lambda_t<\infty\qquad  \mbox{a.s.}\quad  \mbox{and}\quad
\exp\bigl(\xi H^2/B^2\bigr)\bullet\Lambda_t<
\infty \qquad \mbox{a.s.}
\end{equation}
Let us consider the martingale defined for all $t\geq0$ by
\[
M_t=H\bullet(N-\Lambda)_t.
\]
Let $v>w$ and $x$ be positive constants and let $\tau$ be a bounded
stopping time.
Let us define
\[
\hat{V}^\mu= \frac{\mu}{\mu-\phi(\mu)}H^2\bullet
N_\tau+ \frac{B^2x}{\mu
-\phi(\mu)}
\]
for a real number $\mu\in(0,3)$ such that $\mu>\phi(\mu)$, where $\phi
(u)=\exp(u)-u-1$.
Then, for any $\e>0$,
%
\begin{eqnarray}
\label{bracketT} &&\P \biggl(M_\tau\geq\sqrt{2 (1+\e)
\hat{V}^\mu x} + \frac{Bx}{3} \mbox{ and } w\leq
\hat{V}^\mu\leq v \mbox{ and } \sup_{m,t\leq\tau} \bigl
\llvert H^{(m)}_t \bigr\rrvert \leq B \biggr)\nonumber
\\[-8pt]\\[-8pt]
&&\quad \leq2 \biggl(\frac{\log(v/w)}{\log(1+\e)}+1 \biggr) \mathrm{e}^{-x}.\nonumber
\end{eqnarray}
\end{Th}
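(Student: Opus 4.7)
The plan is to apply the standard exponential supermartingale for jump martingales twice---first to $M$ itself to obtain a classical Bernstein-type bound involving the predictable quadratic variation $V_\tau := H^2 \bullet \Lambda_\tau$, and then to the auxiliary predictable process $K := -\mu H^2/B^2$ in order to dominate $V_\tau$ by the observable quantity $\hat V_\tau := H^2 \bullet N_\tau$---and finally to carry out a geometric peeling argument to promote the deterministic variance threshold coming out of the classical Bernstein bound into the random self-normalized threshold $\hat V^\mu_\tau$ that appears in the statement.

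The starting point is the well-known fact that, under the integrability assumption \eqref{expcond}, the process $\mathcal{E}_t(K) := \exp\bigl(K \bullet (N-\Lambda)_t - \phi(K) \bullet \Lambda_t\bigr)$ is a nonnegative supermartingale starting at $1$ for any predictable $K$; this follows from It\^o's formula since $\phi(K) \bullet \Lambda$ compensates $(e^K - 1 - K) \bullet N$. Applied to $K = \xi H$ with $\xi \in (0, 3/B)$, Bennett's elementary bound $\phi(u) \le u^2/(2(1-u/3))$ for $u<3$ together with $|\xi H| \le \xi B$ on $\{\sup_{m,t}|H_t^{(m)}|\le B\}$, Markov's inequality, and the optimal choice $\xi^\ast = (\sqrt{v^*/(2x)} + B/3)^{-1}$, yields the classical Bernstein inequality
\[
\P\bigl(M_\tau \ge \sqrt{2 v^* x} + Bx/3,\ V_\tau \le v^*,\ \sup_{m,t\le\tau}|H_t^{(m)}| \le B\bigr) \le e^{-x}
\]
for every deterministic $v^* > 0$. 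Reapplying the same supermartingale to $K = -\mu H^2/B^2$, which takes values in $[-\mu,0]$ on $\{\sup|H|\le B\}$, monotonicity of $u \mapsto \phi(u)/u^2$ on $|u|\le \mu$ (valid since $\mu \in (0,3)$) gives $\phi(K) \le (\phi(\mu)/\mu^2) K^2 \le \phi(\mu) H^2/B^2$ (using $H^2 \le B^2$), hence $\mathcal{E}_\tau(K) \ge \exp\bigl(-\tfrac{\mu}{B^2}(\hat V_\tau - V_\tau) - \tfrac{\phi(\mu)}{B^2} V_\tau\bigr)$, and Markov produces $\P\bigl((\mu-\phi(\mu)) V_\tau \ge \mu \hat V_\tau + B^2 x,\ \sup|H|\le B\bigr) \le e^{-x}$, which after division by $\mu - \phi(\mu) > 0$ reads exactly $\P(V_\tau \ge \hat V^\mu_\tau,\ \sup|H|\le B) \le e^{-x}$.

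A union bound between the two previous displays yields, for every deterministic $v^* > 0$, $\P(M_\tau \ge \sqrt{2 v^* x} + Bx/3,\ \hat V^\mu_\tau \le v^*,\ \sup|H|\le B) \le 2 e^{-x}$. To conclude, I peel $[w, v]$ geometrically: set $v_k = (1+\e)^{k+1} w$ for $k = 0, \ldots, K$ with $K + 1 \le \log(v/w)/\log(1+\e) + 1$, so that the intervals $[v_k/(1+\e), v_k]$ cover $[w, v]$. On the slice $\{\hat V^\mu_\tau \in [v_k/(1+\e), v_k]\}$ one has $\sqrt{2(1+\e)\hat V^\mu_\tau x} \ge \sqrt{2 v_k x}$, so the combined bound above applied with $v^* = v_k$ and summed over $k$ produces the target $2(\log(v/w)/\log(1+\e) + 1) e^{-x}$ bound. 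The hard part is the middle step: the peculiar coefficients in $\hat V^\mu = \frac{\mu}{\mu-\phi(\mu)}\hat V_\tau + \frac{B^2 x}{\mu-\phi(\mu)}$ are dictated precisely by the auxiliary supermartingale calculation with the specific choice $K = -\mu H^2/B^2$, which is what allows the observable $\hat V_\tau$ to replace the unobservable $V_\tau$ in the self-normalized bound. A minor technicality is that $\{\sup_{m,t}|H_t^{(m)}| \le B\}$ is not a stopping time event; this is handled by working on the stopping time $\tau \wedge \inf\{t : \sup_{m,s\le t}|H_s^{(m)}| > B\}$ before taking expectations.
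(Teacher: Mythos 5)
Your proof is correct and follows essentially the same route as the paper: exponential supermartingale for the counting-process martingale to obtain the classical Bernstein bound with the predictable quadratic variation, a second application (to $-\mu H^2/B^2$, which the paper phrases as applying the already-derived inequality to the process $-H^2$ with parameter $\mu$) to replace $H^2\bullet\Lambda_\tau$ by the observable $\hat V^\mu$, a union bound, and a geometric peeling. The only differences are cosmetic---you optimize $\xi$ before peeling whereas the paper optimizes inside the peel, and you justify the key pointwise bound $\phi(K)\le(\phi(\mu)/\mu^2)K^2$ via monotonicity of $u\mapsto\phi(u)/u^2$, which is the same fact the paper invokes implicitly when it writes $\phi(\xi H_s^{(m)})\le(H_s^{(m)})^2\phi(\xi)$.
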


This result is based on the exponential martingale for counting
processes, which has been used for a long time in the context of the
counting process theory. See, for instance, \cite{Bre,SW} or
\cite{vdG}. This basically gives a concentration inequality taking the
following form (the result is stated here in its univariate form for
comparison purposes): for any $x>0$,
%
\begin{equation}
\label{sho2} \P \biggl(M_\tau\geq\sqrt{2\rho x} + \frac{Bx}{3}
\mbox{ and } \int_0^\tau H_s^2
\lambda(s) \,\mathrm{d}s\leq\rho \mbox{ and } \sup_{s\in
[0,\tau]} \llvert
H_s \rrvert \leq B \biggr) \leq \mathrm{e}^{-x}.
\end{equation}
In (\ref{sho2}), $\rho$ is a deterministic upper bound of $v=\int_0^\tau H_s^2 \lambda(s) \,\mathrm{d}s$, the bracket of the martingale,
and consequently the martingale equivalent of the variance term.
Moreover, $B$ is a deterministic upper bound of $\sup_{s\in[0,\tau]} \llvert  H_s \rrvert $.
The leading term for moderate values of $x$ and $\tau$ large enough is
consequently $\sqrt{2\rho x}$.
The central limit theorem for martingales states that, under some
assumptions, a sequence of martingales $(M_n)_n$ with respective
brackets $(v_n)_n$ tending to a deterministic value $\bar{v}$, once
correctly normalized, tends to a Gaussian process with bracket $\bar
{v}$. Therefore, a term of the form $\sqrt{2 \bar{v} x}$ in the upper
bound is not improvable, in particular the constant $\sqrt{2}$. However
the replacement of the limit $\bar{v}$ by a deterministic upper bound
$\rho$ constitutes a loss.
In this sense, Theorem~\ref{th:weakBer} improves the bound and
consists of plugging in the unbiased estimate $\hat{v}=\int_0^\tau
H_s^2 \,\mathrm{d}N_s$ instead of a nonsharp deterministic upper bound of $v$.
Note that we are not able to obtain exactly the term $\sqrt{2}$ but any
value strictly larger than $\sqrt{2}$, as close as we want to $\sqrt {2}$ up to some additive terms depending on $B$ that are negligible for
moderate values of $x$.

The proof is based on a peeling argument that was first introduced in
\cite{LS} for Gaussian processes and is given in Section~\ref{sec:weakBer}.

Note that there exist also inequalities that seem nicer than \eqref
{sho2} which constitutes the basic brick for our purpose. For instance,
in \cite{DvZ}, it is established that for any deterministic positive
real number $\theta$, for any $x>0$,
%
\begin{equation}
\label{Dvz} \P \biggl(M_\tau\geq\sqrt{2\theta x} \mbox{ and } \int
_0^\tau H_s^2
\,\mathrm{d}\Lambda_s + \int_0^\tau
H_s^2 \,\mathrm{d}N_s \leq\theta \biggr)
\leq \mathrm{e}^{-x}.
\end{equation}
At first sight, this seems better than Theorem~\ref{th:weakBer} because
no linear term depending on $B$ appears, but if we wish to use the
estimate $2 \int_0^\tau H_s^2 \,\mathrm{d}N_s$ instead of $\theta$ in the
inequality, we have to bound $\llvert  H_s \rrvert $ by some $B$ in any case. Moreover,
by doing so, the quadratic term will be of order $\sqrt{4 \hat v x}$
which is worse than the term $\sqrt{2 \hat v x}$ derived in Theorem~\ref
{th:weakBer}, even if this constant $\sqrt{2}$ can only be reached
asymptotically in our case.

There exists a better result if the martingale $M_t$ is for instance
conditionally symmetric (see \cite{penaa,BT,DvZ}): for any $x>0$,
%
\begin{equation}
\label{Dvz2} \P \biggl(M_\tau\geq\sqrt{2\kappa x} \mbox{ and } \int
_0^\tau H_s^2
\,\mathrm{d}N_s \leq\kappa \biggr) \leq \mathrm{e}^{-x},
\end{equation}
which seems close to the ideal inequality. But there are actually two
major problems with this inequality. First, one needs to assume that
the martingale is conditionally symmetric, which cannot be the case in
our situation for general counting processes and general dictionaries.
Second, it depends on the deterministic upper bound $\kappa$ instead of
$\hat{v}$. To replace $\kappa$ by $\hat{v}$ and then to apply peeling
arguments as in the proof of Theorem~\ref{th:weakBer}, we need to
assume the existence of a positive constant $w$ such that $\hat{v}\geq
w$. But if the process is empty, then $\hat{v}=0$, so we cannot
generally find such a positive lower bound, whereas in our theorem, we
can always take $w=\frac{B^2x}{\mu-\phi(\mu)}$ as a lower bound for
$\hat{V}^\mu$.

Finally, note that in Proposition~\ref{weakberLambda} (see
Section~\ref{sec:weakBer}), we also derive a similar bound where $\hat
V^\mu$ is replaced by $\int_0^\tau H_s^2
\,\mathrm{d}\Lambda_s$. Basically, it means that the same type of
results hold for the quadratic characteristic instead of the quadratic
variation. Though this result is of little use here, since the quadratic
characteristic is not observable, we think that it may be of interest
for readers investigating self-normalized results as in \cite{Pena}.

\section{Applications to the Poisson and Aalen
models}\label{sec:oraclegeneral}

We now apply Theorem~\ref{th:oraclewithd} to the Poisson and Aalen
models. The case of the multivariate Hawkes process, which is much more
intricate, will be the subject of the next section.
\subsection{The Poisson model}
Let us recall that in this case, we observe $M$ i.i.d. Poisson
processes with intensity $f_0$ supported by $[0,1]$ (with $M\geq2$)
and that the norm is given by $\llVert  f  \rrVert ^2=\int_0^1 f^2(x) \,\mathrm{d}x$. We
assume that $\Phi$ is an orthonormal system for $\llVert  \cdot  \rrVert $. In this case,
\[
\llVert \cdot \rrVert _{1}^2=M\llVert \cdot \rrVert
^2 \quad \mbox{and}\quad  G=M I,
\]
where $I$ is the identity matrix. One applies Theorem~\ref
{th:oraclewithd} with $c=M$ (so $\P(\Omega_c^c)=0$) and
\[
B_\p=\llVert \p \rrVert _\infty,\qquad  V_\p=\llVert
\p \rrVert _\infty^2 (1+\delta)Mm_1,
\]
for $\delta>0$ and $m_1=\int_0^1f_0(t)\,\mathrm{d}t$. Note that here
$T=1$ and therefore $N^{(m)}_T=N^{(m)}_1$ is the total number of observed
points for the $m$th process. Using
\[
\psi(\p)^2\bullet N_1\leq\llVert \p \rrVert
_\infty^2 \sum_{m=1}^M
N^{(m)}_1
\]
and since the distribution of $\sum_{m=1}^M N^{(m)}_1$ is the Poisson
distribution with parameter $Mm_1$, Cramer--Chernov arguments give:
\[
\P\bigl(\Omega_{V,B}^c\bigr)\leq\P \Biggl(\sum
_{m=1}^M N^{(m)}_1> (1+
\delta)Mm_1 \Biggr)\leq\exp \bigl(-\bigl\{(1+\delta)\ln(1+\delta)-
\delta\bigr\}Mm_1 \bigr).
\]
For $\alpha>0$, by choosing $x=\alpha\log(M)$, we finally obtain the
following corollary derived from Theorem~\ref{th:oraclewithd}.

\begin{Cor}\label{Cor:oracle:poisson}
With probability larger than $1-C_1\frac{\llvert  \Phi \rrvert \log(M)}{M^\alpha}-\mathrm{e}^{-C
_2M}$, where $C_1$ is a constant depending on $\mu$, $ \e$, $\alpha$,
$\delta$ and $m_1$ and $C_2$ is a constant depending on $\delta$ and
$m_1$, we have:
\begin{eqnarray*}
&&\llVert \hat{f}-f_0 \rrVert ^2\leq C \inf
_{a\in\R^{\Phi}} \Biggl\{\llVert f_0-f_a\rrVert
^2 \\
&&\hphantom{\llVert \hat{f}-f_0 \rrVert ^2\leq C \inf
_{a\in\R^{\Phi}} \Biggl\{}{}+ \frac{1}{M^2}\sum_{\p\in S(a)}
\Biggl(\log(M)\sum_{m=1}^M \int
_0^1 \p^2(x) \,\mathrm{d}N^{(m)}_x+
\log^2(M) \llVert \p \rrVert _\infty ^2 \Biggr)
\Biggr\},
\end{eqnarray*}
where $C$ is a constant depending on $\mu$, $ \e$, $\alpha$, $\delta$
and $m_1$.
\end{Cor}

To shed some lights on this result, consider an asymptotic perspective
by assuming that $M$ is large. Assume also, for sake of simplicity,
that $f_0$ is bounded from below on $[0,1]$. If the dictionary $\Phi$
(whose size may depend on $M$) satisfies
\[
\max_{\p\in\Phi}\llVert \p \rrVert _\infty=\mathrm{o} \biggl(\sqrt{
\frac{M}{\log M}} \biggr),
\]
then, since, almost surely,
\[
\frac{1}{M}\sum_{m=1}^M \int
_0^1 \p^2(x) \,\mathrm{d}N^{(m)}_x
\stackrel{M\to \infty} {\longrightarrow} \int_0^1
\p^2(x) f_0(x) \,\mathrm{d}x,
\]
almost surely,
\begin{eqnarray*}
&& \frac{1}{M^2}\sum_{\p\in S(a)} \Biggl(\log(M)\sum
_{m=1}^M \int_0^1
\p^2(x) \,\mathrm{d}N^{(m)}_x+
\log^2(M) \llVert \p \rrVert _\infty^2 \Biggr)
\\
& &\quad  =\frac{\log M}{M}\sum_{\p\in S(a)}\int
_0^1 \p^2(x) f_0(x)\,
\mathrm{d}x\times\bigl(1+\mathrm{o}(1)\bigr).
\end{eqnarray*}
The right-hand term corresponds, up to the logarithmic term, to the sum
of variance terms when estimating $\int_0^1 \p(x)f_0(x)\,\mathrm{d}x$
with $\frac{1}{M}\sum_{m=1}^M\int_0^1 \p(x) \,\mathrm{d}N^{(m)}_x$ for $\p\in
S(a)$. This means that the estimator adaptively
achieves the best trade-off between a bias term and a variance term.
The logarithmic term is the price to pay for adaptation. Furthermore,
when $M\to+\infty$, the inequality of Corollary~\ref
{Cor:oracle:poisson} holds with probability that goes to 1 at a
polynomial rate. We refer the reader to \cite{RBR} for a deep
discussion on optimality of such results.
\subsection{The Aalen model}\label{sec:aalenvrai}
Results similar to those presented in this paragraph can be found in
\cite{GG} under restricted eigenvalues conditions instead of (\ref
{normlower}). 
Recall that we observe an $M$-sample $(X^{(m)}, Y^{(m)},
N^{(m)})_{m=1,\ldots,M}$, with $Y^{(m)}=(Y_t^{(m)})_{t\in[0,1]}$ and
$N^{(m)}=(N_t^{(m)})_{t\in[0,1]}$ (with $M\geq2$). We assume that $X^{(m)}
\in[0,1]$ and that the intensity of $N_t^{(m)}$ is $f_0(t,X^{(m)})Y^{(m)}_t$.
We set for any $f$,
\[
\llVert f \rrVert _e^2:=\E \biggl(\int
_0^1 f^2\bigl(t,X^{(1)}
\bigr) \bigl(Y^{(1)}_t \bigr)^2 \,\mathrm {d}t
\biggr).
\]
%
%
We assume that $\Phi$ is an orthonormal system for $\llVert  \cdot  \rrVert _2$,
the classical norm on ${\mathbb L}_2([0,1]^2)$, and we assume that
there exists a positive constant $r$ such that
%
\begin{equation}
\label{condr} \forall f\in{\mathbb L}_2\bigl([0,1]^2
\bigr), \qquad \llVert f \rrVert _e\geq r \llVert f \rrVert _2,
\end{equation}
so that $\llVert  \cdot  \rrVert _e$
is a norm. If we assume, for instance, that the density of the $X^{(m)}$'s
is lower bounded by a positive constant $c_0$ and there exists $c_1>0$
such that for any $t$,
\[
\E\bigl[\bigl(Y_t^{(1)}\bigr)^2|X^{(1)}
\bigr]\geq c_1
\]
then (\ref{condr}) holds with $r^2=c_0 c_1$. The empirical version of
$\llVert  f  \rrVert _e$, denoted $\llVert  f  \rrVert _{\mathrm{emp}}$, is defined by
\[
\llVert f \rrVert _{\mathrm{emp}}^2:=\frac{1}{M}\llVert f
\rrVert _{T}^2=\frac{1}{M}\sum
_{m=1}^M\int_0^1
f^2\bigl(t,X^{(m)}\bigr) \bigl(Y^{(m)}_t
\bigr)^2 \,\mathrm{d}t.
\]
Unlike the Poisson model, since the intensity depends on covariates $X^{(m)}
$'s and variables $Y^{(m)}$'s, the control of $\P(\Omega_c^c)$ is much
more cumbersome for the Aalen case, even if it is less intricate than
for Hawkes processes (see Section~\ref{sec:hawkes}).
We have the following result proved in Section~\ref{2proofs}.

\begin{Prop}\label{Prop:Aalen}
We assume that (\ref{condr}) is satisfied, the density of the
covariates $X^{(m)}$ is bounded by $D$ and
%
\begin{equation}
\label{condY} \sup_{t\in[0,1]}\max_{m\in\{1,\ldots,M\}}Y^{(m)}_t
\leq1 \qquad \mbox{almost surely}.
\end{equation}
We consider an orthonormal dictionary $\Phi$ of functions of ${\mathbb
L}_2([0,1]^2)$ that may depend on $M$, and we let $r_\Phi$ denote the
spectral radius of the matrix $\mathfrak{H}$ whose components are
$\mathfrak{H}_{\p,\p'}=\int\!\int\llvert  \p(t,x) \rrvert  \llvert  \p'(t,x) \rrvert \,\mathrm{d}t\,\mathrm{d}x$.
Then, if
%
\begin{equation}
\label{cond:Aalen} \max_{\p\in\Phi}\llVert \p \rrVert
_\infty^2\times r_\Phi\llvert \Phi \rrvert \times
\frac{\log
M}{M}\to0,
\end{equation}
when $M\to+\infty$ then, for any $\beta>0$, there exists $C_1>0$
depending on $\beta$, $D$ and $f_0$ such that with $c=C_1M$,
we have
\[
\P\bigl(\Omega_c^c\bigr)=\mathrm{O}\bigl(\llvert \Phi \rrvert
^2M^{-\beta}\bigr).
\]
%
\end{Prop}

Assumption (\ref{condY}) is usually satisfied in most of the practical
examples where Aalen models are involved. See \cite{andersen} for
explicit examples and see, for instance, \cite{greg,RB} for other
articles where this assumption is made. In the sequel, we also assume that
there exists a positive constant $R$ such that
%
\begin{equation}
\label{CondN} \max_{m\in\{1,\ldots,M\}}N^{(m)}_1\leq R\qquad
\mbox{a.s.}
\end{equation}
This assumption, considered by \cite{RB}, is obviously satisfied in
survival analysis where there is at most one death per individual.
It could have been relaxed in our setting, by considering exponential
moments assumptions, to include Markov cases for instance. However,
this much simpler assumption allows us to avoid tedious and unnecessary
technical aspects since we only wish to illustrate our results in a
simple framework. Under (\ref{condY}) and (\ref{CondN}),
almost surely,
\[
\psi(\p)^2\bullet N_T=\sum_{m=1}^M
\int_0^1 \bigl[Y^{(m)}_t
\bigr]^2\p^2\bigl(t,X^{(m)}\bigr) \,\mathrm
{d}N^{(m)}_t\leq\sum_{m=1}^M
\int_0^1 \p^2\bigl(t,X^{(m)}
\bigr) \,\mathrm{d}N^{(m)}_t\leq M R \llVert \p \rrVert
_\infty^2.
\]
So, we apply Theorem~\ref{th:oraclewithd} with $B_\p=\llVert  \p  \rrVert _\infty$,
$V_\p=M R \llVert  \p  \rrVert _\infty^2$ (so $\P(\Omega_{V,B})=1$) and $x=\alpha
\log(M)$ for $\alpha>0$. We finally obtain the following corollary.

\begin{Cor}
Assume that (\ref{condY}) and (\ref{CondN}) are satisfied. With
probability larger than $1-C_1\frac{\llvert  \Phi \rrvert \log(M)}{M^\alpha}-\P(\Omega_c^c)$,
where $C_1$ is a constant depending on $\mu$, $ \e$, $\alpha$ and $R$,
we have:
\begin{eqnarray*}
\llVert \hat{f}-f_0 \rrVert _{\mathrm{emp}}^2&\leq& C
\inf_{a\in
\R^{\Phi}} \Biggl\{\llVert f_0-f_a
\rrVert _{\mathrm{emp}}^2
\\
&&\hphantom{C\inf_{a\in
\R^{\Phi}} \Biggl\{}\!{} +\frac{1}{M^2}\sum_{\p\in S(a)} \Biggl(\log(M)
\sum_{m=1}^M\int_0^1
\p^2\bigl(t,X^{(m)}\bigr) \,\mathrm{d}N^{(m)}_t+
\log^2(M)\llVert \p \rrVert _\infty^2 \Biggr)
\Biggr\},
\end{eqnarray*}
where $C$ is a constant depending on $\mu$, $ \e$, $\alpha$ and $R$.
\end{Cor}

To shed lights on this result, assume that the density of the $X^{(m)}$'s
is upper bounded by a constant $\tilde{R}$.
In an asymptotic perspective with $M\to\infty$,
we have almost surely,
\[
\frac{1}{M} \sum_{m=1}^M\int
_0^1 \p^2\bigl(t,X^{(m)}
\bigr) \,\mathrm{d}N^{(m)}_t\to\E \biggl(\int
_0^1 \p^2\bigl(t,X^{(1)}
\bigr) f_0\bigl(t,X^{(1)}\bigr) Y^{(1)} \,\mathrm{d}t
\biggr).
\]
But\vspace*{-1pt}
\[
\E \biggl(\int_0^1 \p^2
\bigl(t,X^{(1)}\bigr) f_0\bigl(t,X^{(1)}\bigr)
Y^{(1)} \,\mathrm{d}t \biggr)\leq\llVert f_0 \rrVert
_\infty\E \biggl( \int_0^1
\p^2\bigl(t,X^{(1)}\bigr) \,\mathrm{d}t \biggr) \leq \tilde{R}
\llVert f_0 \rrVert _\infty.
\]
So, if the dictionary $\Phi$ satisfies\vspace*{-1pt}
\[
\max_{\p\in\Phi}\llVert \p \rrVert _\infty=\mathrm{O} \biggl(\sqrt{
\frac{M}{\log M}} \biggr),\vspace*{-1pt}
\]
which is true under (\ref{cond:Aalen}) if $r_\Phi\llvert  \Phi \rrvert \geq1$,
then, almost surely, the variance term is asymptotically smaller than
$\log(M)\frac{\llvert  S(a) \rrvert \llVert  f_0  \rrVert _\infty}{M}$ up to constants. So, we can
draw the same conclusions as for the Poisson model. We have not
discussed here the choice of $\Phi$ and Condition (\ref{cond:Aalen}).
This will be extensively done in Section~\ref{LassoH} where we deal
with a similar condition but in a more involved setting.
\section{Applications to the case of multivariate Hawkes process}\label
{sec:hawkes}
For a multivariate Hawkes model, the parameter $f_0=(\nu^{(m)}, (h_\ell^{(m)}
)_{\ell=1,\ldots,M})_{m=1,\ldots,M}$ belongs to\vspace*{-1pt}
\[
\mathcal{H}=\mathbb{H}^M= \Biggl\{f=\bigl(\mathbf{f}^{(m)}
\bigr)_{m=1,\ldots,M}\Bigm|\mathbf {f}^{(m)}\in\mathbb{H} \mbox{ and }
\llVert f \rrVert ^2=\sum_{m=1}^M
\bigl\llVert \mathbf {f}^{(m)} \bigr\rrVert ^2 \Biggr\}\vspace*{-1pt}
\]
where\vspace*{-1pt}
\begin{eqnarray*}
\mathbb{H}&=& \Biggl\{\mathbf{f}=\bigl(\mu, (g_\ell)_{\ell=1,\ldots,M}
\bigr)\Bigm| \mu\in \R , g_\ell\mbox{ with support in } [0,1]
\\[-1pt]
&&\hphantom{\Biggl\{}\mbox{and }\llVert \mathbf{f} \rrVert ^2= \mu^2 + \sum
_{\ell=1}^M \int_0^1
g_\ell^2(x) \,\mathrm{d}x <\infty \Biggr\}.
\end{eqnarray*}
If one defines the linear predictable transformation $\kappa$ of
$\mathbb{H}$ by
%
\begin{equation}
\label{kappa} \kappa_t(\mathbf{f})=\mu+ \sum
_{\ell=1}^M \int_{t-1}^{t-}
g_\ell (t-u)\,\mathrm{d}N^{(\ell)}_u,
\end{equation}
then the transformation $\psi$ on $\mathcal{H}$ is given by
\[
\psi^{(m)}_t(f)=\kappa_t\bigl(
\mathbf{f}^{(m)}\bigr).
\]
The first oracle inequality of Theorem~\ref{th:oraclewithd} provides
theoretical guaranties of our Lasso methodology in full generality and
in particular, even if inhibition takes place (see Section~\ref{sec:IntroHawkes}). 
Since $\Omega_{V,B}$ and $\Omega_c$ are observable events, we know
whether the oracle inequality holds. However we are not able to
determine $\P(\Omega_{V,B})$ and $\P(\Omega_c)$ in the general case.
Therefore, in Sections~\ref{probaHawkes} and \ref{LassoH}, we assume
that all interaction functions are nonnegative and that there exists
$f_0$ in $\mathcal{H}$ so that for any $m$ and any $t$,
\[
\lambda^{(m)}_t=\psi^{(m)}_t(f_0).
\]
We also assume that the process is observed on $[-1,T]$ with $T>1$.
\subsection{Some useful probabilistic results for multivariate Hawkes
processes}\label{probaHawkes}
%
In this paragraph, we present some particular exponential results and
tail controls for Hawkes processes. As far as we know, these results
are new: They constitute the generalization of \cite{RBRoy} to the
multivariate case. In this paper, they are used to control
$\P(\Omega_c^c)$ and $\P(\Omega_{V,B}^c)$ but they may be of
independent interest.

Since the functions $h^{(m)}_{\ell}$'s are nonnegative, a cluster
representation exists.
We can indeed construct the Hawkes
process by the Poisson cluster representation (see \cite{DVJ}) as follows:
\begin{itemize}
\item Distribute \emph{ancestral points}
with marks $\ell= 1, \ldots, M$
according to homogeneous Poisson processes with intensities $\nu^{(\ell
)}$ on $\mathbb{R}$.
\item For each ancestral point, form a cluster of descendant
points. More precisely, starting with an ancestral point at time 0 of a
certain type, we successively build new generations as Poisson
processes with intensity $h^{(m)}_{\ell}(\cdot-T)$, where $T$ is the
parent of type $\ell$ (the corresponding children being of type
$m$). We will be in the situation where this process becomes
extinguished and we
denote by $H$ the last children of all generations, which also
represents the length of the cluster. Note that the number of
descendants is a multitype branching process
(and there exists a branching cluster representation (see
\cite{BM,DVJ,hawkes})) with offspring
distributions being Poisson variables with means
\[
\gamma_{\ell, m} = \int_0^1
h^{(m)}_{\ell}(t) \,\mathrm{d} t.
\]
\end{itemize}
%
The essential part we need is that the expected number of offsprings of
type $m$ from a point of type $\ell$ is $\gamma_{\ell, m}$. With
$\Gamma= (\gamma_{\ell, m})_{\ell, m = 1, \ldots, M}$, the theory of
multitype branching processes gives that
the clusters are finite almost surely if 
the spectral
radius of $\Gamma$ is strictly smaller than 
1. In this case, there
is a stationary version of the Hawkes process by the Poisson cluster
representation.

Moreover, if $\Gamma$ has
spectral radius strictly smaller than 1, one can provide a bound on the
number of points in a cluster. We denote by $\P_\ell$ the law of the
cluster whose ancestral point is of type $\ell$, $\E_\ell$ is the
corresponding expectation.

The following lemma is very general and holds even if the function
$h_\ell^{(m)}$ have infinite support as long as the spectral radius $\Gamma
$ is strictly less than 1.

\begin{Lemma} \label{lem:expmoment}
If $W$ denotes the total number of points of any type in
the cluster whose ancestral point is of type $\ell$, then if the
spectral radius of $\Gamma$ is strictly
smaller than $1$ there exists $\vartheta_\ell> 0$, only depending on
$\ell$ and on $\Gamma$, such that
\[
\E_{\ell} \bigl(\mathrm{e}^{\vartheta_\ell W}\bigr) < \infty.
\]
\end{Lemma}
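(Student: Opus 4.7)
The plan is to exploit the branching structure of the Hawkes cluster, which is a multitype Galton--Watson tree with independent Poisson offspring laws, and to derive a functional equation for the moment generating function via the branching property. Conditioning on the first generation of the ancestor, we have the recursive identity in distribution
$$W_\ell \stackrel{d}{=} 1 + \sum_{m=1}^M \sum_{i=1}^{Z_{\ell,m}} W_{m,i},$$
where $Z_{\ell,m} \sim \mathrm{Poisson}(\gamma_{\ell,m})$, the $W_{m,i}$ are i.i.d.\ copies of $W_m$, and everything is mutually independent. Writing $\psi_\ell(\theta) = \E_\ell(e^{\theta W})$ and using $\E(s^{Z_{\ell,m}}) = e^{\gamma_{\ell,m}(s-1)}$, this yields the functional equation
$$\psi_\ell(\theta) = \exp\Bigl(\theta + \sum_{m=1}^M \gamma_{\ell,m}(\psi_m(\theta) - 1)\Bigr).$$

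Since $\psi_\ell(\theta)$ is a priori only defined in $[1,+\infty]$, I would first work with $W^{(n)}$, the total number of points in generations $0,1,\dots,n$, whose mgf $\Psi_\ell^{(n)}(\theta) := \E_\ell(e^{\theta W^{(n)}})$ is manifestly finite (a finite sum of Poisson variables) and satisfies
$$\Psi_\ell^{(n+1)}(\theta) = \exp\Bigl(\theta + \sum_m \gamma_{\ell,m}(\Psi_m^{(n)}(\theta) - 1)\Bigr), \qquad \Psi_\ell^{(0)}(\theta) = e^\theta.$$
Setting $u_\ell^{(n)} = \Psi_\ell^{(n)}(\theta) - 1$ and using the elementary bound $e^x - 1 \leq x e^K$ valid on $x \in [0,K]$, the recursion gives (as long as $\theta + (\Gamma u^{(n)})_\ell \leq K$) the vector inequality
$$u^{(n+1)} \leq e^K\bigl(\theta \mathbf{1} + \Gamma u^{(n)}\bigr).$$

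The crucial step is to use $\rho(\Gamma) < 1$ to produce a uniform coordinatewise upper bound $v \in \R_+^M$ with $u^{(n)} \leq v$ for every $n$. I choose $K > 0$ small enough that $e^K \rho(\Gamma) < 1$; then $(I - e^K \Gamma)^{-1} = \sum_{k\geq 0}(e^K \Gamma)^k$ has non-negative entries and I set $v := e^K \theta\,(I - e^K \Gamma)^{-1} \mathbf{1}$, so that by construction $e^K(\theta \mathbf{1} + \Gamma v) = v$. A short induction then gives $u^{(n)} \leq v$: the base step $u_\ell^{(0)} = e^\theta - 1 \leq e^K \theta \leq v_\ell$ holds for $\theta \leq K$, and the inductive step only needs $\theta + (\Gamma v)_\ell \leq K$, which from $(I - e^K\Gamma)v = e^K \theta \mathbf{1}$ reduces to $e^{-K} v_\ell \leq K$, i.e.\ $\theta \leq K/\|(I - e^K \Gamma)^{-1}\mathbf{1}\|_\infty$. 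Picking $\vartheta_\ell$ below this explicit threshold and invoking monotone convergence since $W^{(n)} \uparrow W$ almost surely, one concludes
$$\E_\ell(e^{\vartheta_\ell W}) = \lim_{n\to\infty} \Psi_\ell^{(n)}(\vartheta_\ell) \leq 1 + v_\ell < \infty.$$

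The main obstacle is the nonlinearity: the $\exp(\cdot)$ on the right-hand side of the recursion can blow up unless its argument stays in a bounded region, so the induction must \emph{simultaneously} keep $u^{(n)} \leq v$ and $\theta + (\Gamma u^{(n)})_\ell \leq K$. This is precisely where strict subcriticality $\rho(\Gamma)<1$ is used: it leaves enough room to perturb $\Gamma$ to $e^K \Gamma$ while preserving spectral radius less than one, thereby making the fixed-point equation for $v$ solvable with $v$ of order $\theta$.
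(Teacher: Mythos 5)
Your proof is correct, and it follows the same overall strategy as the paper: exploit the multitype Galton--Watson structure of the cluster, derive a recursion for the Laplace transform of the truncated count $W^{(n)}$ by conditioning on the first generation, and use subcriticality of $\Gamma$ to show the iterates stay bounded, then pass to the limit by monotone convergence. The two arguments differ in how subcriticality is turned into a uniform bound. The paper abstracts the offspring law into its log-Laplace transform $\phi$, notes $D\phi(0)=\Gamma$, invokes the Householder theorem to produce a norm in which $\|\Gamma\|<1$, and runs a geometric-series contraction estimate on a ball $B(0,r)$. You instead use the explicit Poisson form of the offspring generating function, linearize via the elementary bound $e^x-1\le x e^K$ on $[0,K]$, pick $K$ small enough that $e^K\Gamma$ remains subcritical, and take the Neumann-series fixed point $v=e^K\theta\,(I-e^K\Gamma)^{-1}\mathbf{1}$ as the inductive upper bound. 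Your route is more elementary (no Householder norm is needed) and delivers a fully explicit admissible threshold $\theta\le K/\|(I-e^K\Gamma)^{-1}\mathbf{1}\|_\infty$; the paper's version is marginally more general in that it only uses finiteness and smoothness of $\phi$ near $0$ (any light-tailed offspring law would do), not the Poisson form specifically. Since the lemma is stated for the Hawkes cluster, where offspring are indeed Poisson, this costs you nothing, and both proofs are interchangeable here.
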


This easily leads to the following result, which provides the existence
of the Laplace transform of the total number of points in an arbitrary
bounded interval, when the functions $h_\ell^{(m)}$ have bounded support.

\begin{Prop}\label{momentsHawkes} Let $N$ be a stationary multivariate
Hawkes process, with compactly supported nonnegative interactions
functions and such that the spectral radius of $\Gamma$ is strictly
smaller than~$1$. For any $A>0$, let $N_{[-A,0)}$ be the total number of
points of $N$ in $[-A,0)$, all marks included.
Then there exists a constant $\theta> 0$, depending on the
distribution of the process and on~$A$, such that
\[
\mathcal{E}:=\E\bigl(\mathrm{e}^{\theta N_{[-A,0)}}\bigr) < \infty,
\]
which implies that for all positive $u$
\[
\P(N_{[-A,0)}\geq u)\leq\mathcal{E}\mathrm{e}^{-\theta u}.
\]
\end{Prop}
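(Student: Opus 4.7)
The plan is to use the Poisson cluster representation described before Lemma~\ref{lem:expmoment} to reduce the finiteness of $\mathcal{E}$ to an exponential moment of a single cluster's size, which is exactly what Lemma~\ref{lem:expmoment} provides. The tail bound $\P(N_{[-A,0)} \geq u) \leq \mathcal{E} e^{-\theta u}$ is then an immediate consequence of Markov's inequality applied to $e^{\theta N_{[-A,0)}}$, so the real content is the first assertion $\mathcal{E} < \infty$.

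First I would recall that by the cluster representation, for each $\ell$ the ancestors of type $\ell$ form an independent homogeneous Poisson process on $\R$ with intensity $\nu^{(\ell)}$, and each ancestor at time $T$ of type $\ell$ spawns an independent cluster of random size $W$ and length $H$ distributed according to $\P_\ell$. Since the interaction functions have support in $(0,1]$, any child is strictly less than $1$ after its parent, so the cluster length is bounded by its depth, itself at most $W - 1$; in particular $H \leq W$. A cluster born at time $T$ contributes at most $W$ points to $[-A, 0)$ and contributes none unless $T \in [-A - H, 0)$. Since the clusters attached to distinct ancestors are mutually independent, applying the Laplace functional formula to the marked Poisson process of ancestors (marked by their own cluster) gives
\begin{equation*}
\E\!\left[e^{\theta N_{[-A,0)}}\right] \leq \exp\!\left(\sum_{\ell=1}^M \nu^{(\ell)} \int_\R \E_\ell\!\left[e^{\theta W \mathbf{1}\{t+H \geq -A,\, t<0\}} - 1\right] dt\right).
\end{equation*}
The integrand vanishes outside $t \in [-A-H, 0)$, an interval of length $A + H$, so using $H \leq W$ the exponent is bounded by $\sum_\ell \nu^{(\ell)} \E_\ell[(e^{\theta W} - 1)(A + W)]$.

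To conclude I would invoke Lemma~\ref{lem:expmoment}: for any $\theta \in (0, \min_\ell \vartheta_\ell)$, the inequality $W e^{\theta W} \leq C_\theta e^{\vartheta_\ell W}$ holds for a finite constant $C_\theta$, which makes $\E_\ell[(e^{\theta W} - 1)(A + W)]$ finite for each $\ell$. Hence $\mathcal{E} < \infty$ for any such $\theta$, and Markov's inequality then delivers the stated tail bound.

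The main delicate point is setting up the Laplace functional correctly while tracking both the ancestor's time \emph{and} the random cluster length $H$. The bound $H \leq W$, which relies crucially on the bounded-support assumption on the interaction functions, is what keeps the integration clean and reduces the problem to Lemma~\ref{lem:expmoment}; without it one would need a separate exponential tail bound on $H$.
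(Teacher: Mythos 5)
Your proof is correct and relies on the same core ingredients as the paper's: the Poisson cluster representation, Lemma~\ref{lem:expmoment} for exponential moments of the cluster size $W$, and the observation that bounded-support interactions force a cluster rooted far from $[-A,0)$ to be large if it is to contribute there. The execution differs slightly: the paper bins ancestors into unit intervals $[-n-1,-n]$ and bounds each such cluster's contribution to $[-A,0)$ by $\max\{W - n + \lceil A\rceil, 0\}$, then sums the resulting log-Laplace contributions over $n$; you instead invoke the Laplace functional of the marked Poisson process of ancestors directly in continuous time and use the cruder bound that a cluster rooted at $t$ contributes at most $W\,\indic_{\{t\in[-A-H,0)\}}$ points, together with $H\le W$ (followed by Tonelli to pass the $t$-integral inside $\E_\ell$). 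Both routes reduce to controlling $\E_\ell[W e^{\theta W}]$ via Lemma~\ref{lem:expmoment}; yours avoids the discretization and is marginally cleaner, while the paper's per-cluster bound is a touch tighter (it subtracts the points spent reaching $[-A,0)$), though this extra precision is immaterial for the finiteness conclusion.
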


Moreover, one can strengthen the ergodic theorem in a nonasymptotic
way, under the same assumptions.

\begin{Prop}\label{flow}
Under the assumptions of Proposition~\ref{momentsHawkes}, let $A>0$ and
let $Z(N)$ be a function depending on the points of $N$ lying in
$[-A,0)$. 
Assume that there exist $b$ and $\eta$ nonnegative constants such that
\[
\bigl\llvert Z(N) \bigr\rrvert \leq b \bigl(1+N_{[-A,0)}^\eta
\bigr),
\]
where $N_{[-A,0)}$ represents the total number of points of $N$ in
$[-A,0)$, all marks included.
We denote $\mathfrak{S}$ the shift operator, meaning that $Z\circ
\mathfrak{S}_t(N)$ depends now in the same way as $Z(N)$ on some points
that are now the points of $N$ lying in $[t-A,t)$.

We assume $\E[\llvert  Z(N) \rrvert ]<\infty$ and for short, we denote $\E(Z)=\E
[Z(N)]$. Then, for any $\alpha>0$, there exists a constant $\mathcal
{T}(\alpha,\eta, f_0, A)>1$ such that for $T>\mathcal{T}(\alpha,\eta
,f_0,A)$, there exist $C_1$, $C_2$, $C_3$ and $C_4$ positive constants
depending on $\alpha, \eta, A$ and $f_0$ such that
\[
\P \biggl(\int_0^T \bigl[Z\circ
\mathfrak{S}_t(N) - \E(Z)\bigr] \,\mathrm{d}t \geq C_1\sigma
\sqrt{T\log^3(T)} +C_2 b \bigl(\log(T)
\bigr)^{2+\eta} \biggr) \leq\frac
{C_4}{T^\alpha},
\]
with $\sigma^2=\E([Z(N)-\E(Z)]^2\indic_{N_{[-A,0)} \leq\tilde{\mathcal
{N}}})$ and $\tilde{\mathcal{N}}=C_3 \log(T)$.
\end{Prop}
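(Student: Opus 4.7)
The plan is to combine a truncation step with a blocking argument based on the Poisson cluster representation, and then apply Bernstein's inequality to a sum of independent block contributions. The key observation is that because clusters of a subcritical stationary Hawkes process are almost surely short (length $\sim \log T$ with high probability), the process restricted to well-separated sub-intervals of $[0,T]$ depends on disjoint families of ancestors, which are genuinely independent.

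First I set $\tilde{\mathcal N} = C_3\log T$ and introduce the truncated functional $\tilde Z(N) = Z(N)\indic_{N_{[-A,0)} \leq \tilde{\mathcal N}}$. Discretizing $[0,T]$ on a unit grid and applying Proposition~\ref{momentsHawkes} together with a union bound shows that the event $\Omega_{\mathrm{tr}} = \{\sup_{t \in [0,T]} N_{[t-A,t)} \leq \tilde{\mathcal N}\}$ has probability at least $1 - O(T^{-\alpha})$ for $C_3$ large enough relative to $\theta$ and $\alpha$. On $\Omega_{\mathrm{tr}}$ one has $\int_0^T Z\circ\theta_t\,dt = \int_0^T \tilde Z\circ\theta_t\,dt$, with $|\tilde Z(N)| \leq b(1+\tilde{\mathcal N}^\eta) \lesssim b(\log T)^\eta$. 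A further application of Proposition~\ref{momentsHawkes} (polynomial-times-exponential tail) yields $|\E Z - \E\tilde Z| \lesssim T^{-\alpha}$, so replacing the centering by $\E\tilde Z$ contributes only an additive error $O(T^{1-\alpha})$, which is absorbed by enlarging $\alpha$.

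For the blocking step, I use the Poisson cluster representation: $N$ is the superposition of i.i.d.\ finite clusters attached to independent ancestor Poisson processes. By Lemma~\ref{lem:expmoment}, for $L = C_6\log T$ with $C_6$ large enough, the event $\mathcal H_L$ that every cluster with ancestor in $[-A-L, T]$ has length at most $L$ has probability at least $1 - O(T^{-\alpha})$ (union bound over the $O(T)$ ancestors). Partition $[0,T]$ into alternating main blocks $I_k$ and buffer blocks $J_k$ of common length $\ell_T = A+L+1 \asymp \log T$, giving $K \asymp T/\log T$ main blocks. On $\Omega_{\mathrm{tr}}\cap \mathcal H_L$ the contributions $S_k = \int_{I_k}(\tilde Z\circ\theta_t - \E\tilde Z)\,dt$ are measurable functions of disjoint sub-intervals of the ancestor Poisson process, hence are independent. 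Each $S_k$ is bounded by $\lesssim b(\log T)^{1+\eta}$ and has variance at most $\lesssim \sigma^2(\log T)^2$, since $\mathrm{Cov}(\tilde Z\circ\theta_t, \tilde Z\circ\theta_s)$ vanishes for $|t-s| > A+L$ (on $\mathcal H_L$) and is bounded by $\sigma^2$ otherwise. Applying Bernstein's inequality for independent bounded random variables to $\sum_k S_k$ at deviation level $x \asymp \log T$, and similarly to the buffer sum, yields the deviation bound $C_1\sigma\sqrt{T\log^3 T} + C_2 b(\log T)^{2+\eta}$ with probability at least $1 - O(T^{-\alpha})$.

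The main technical obstacle is the blocking step: one must carefully exploit the cluster representation to guarantee that, conditionally on $\mathcal H_L$, distinct main blocks are functionals of disjoint pieces of the ancestor Poisson process, which dictates the buffer size in terms of $A$ and $L$. Once this independence is in hand, the rest reduces to standard bookkeeping via Bernstein's inequality and the exponential tail bounds of Proposition~\ref{momentsHawkes} and Lemma~\ref{lem:expmoment}. The threshold $\mathcal T(\alpha,\eta,f^*,A)$ is chosen large enough that $\log T$ dominates the implicit constants produced by these two results, and the constants $C_1,\ldots,C_4$ absorb everything else.
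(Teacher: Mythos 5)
Your overall strategy---truncate the functional, block $[0,T]$ into $\asymp T/\log T$ sub-intervals of length $\asymp\log T$ separated by buffers of comparable length, argue approximate independence from the cluster representation, and finish with a Bernstein-type inequality for the independent block sums---is essentially the one the paper uses. The paper likewise takes blocks of size $x\asymp\log T$, alternates ``main'' and ``buffer'' blocks (keeping every other block and bounding the total by twice the odd sum), truncates on an event of the form $\{N_{[t-A,t)}\leq\tilde{\mathcal N}\ \forall t\}$ with $\tilde{\mathcal N}\asymp\log T$, controls the truncation and its effect on the mean via the exponential tails of Proposition~\ref{momentsHawkes} and Lemma~\ref{lem:expmoment}, and concludes with a moment-generating-function Bernstein bound.

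The one place where your argument as written does not hold is the independence step. You assert that ``on $\Omega_{\mathrm{tr}}\cap\mathcal H_L$ the contributions $S_k$ are measurable functions of disjoint sub-intervals of the ancestor Poisson process, hence are independent,'' and later that ``conditionally on $\mathcal H_L$, distinct main blocks are functionals of disjoint pieces.'' Neither formulation is correct: measurability is a property of a random variable, not of a random variable restricted to an event, and conditioning on the global event $\mathcal H_L$---which simultaneously constrains clusters across all blocks---destroys any product structure, since $\mathcal H_L$ is not a product of block-local events. The same objection applies to the phrase ``$\mathrm{Cov}(\tilde Z\circ\theta_t,\tilde Z\circ\theta_s)$ vanishes for $|t-s|>A+L$ on $\mathcal H_L$.'' What you actually need is a coupling, not a conditioning: introduce genuinely independent surrogates $\hat S_k$, each built from a fresh copy of the process (or, equivalently, from only those ancestors and $L$-truncated clusters that can reach the $k$-th main block), observe that $\hat S_k=S_k$ on $\mathcal H_L$, and bound $\P\bigl(\sum_k S_k\geq u\bigr)\leq\P(\mathcal H_L^c)+\P\bigl(\sum_k\hat S_k\geq u\bigr)$; Bernstein then applies to the $\hat S_k$'s, which are independent by construction. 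This is precisely what the paper does, by introducing i.i.d.\ stationary Hawkes processes $(\tilde M_q^x)_q$, truncating each to $[2qx-A,2qx+x]$, and paying the coupling cost $k\,\P(T_e>T/(2k)-A)$, which it controls via the cluster-length tail of Lemma~\ref{lem:expmoment}---exactly the information your event $\mathcal H_L$ encodes, packaged so that independence of the block contributions is automatic rather than asserted. With this repair your proof closes and matches the paper's; the variance and mean-error bookkeeping you sketch (including the $(\log T)^2\sigma^2$ per-block variance and the $O(T^{1-\alpha})$ mean shift) is in order, though the paper obtains the per-block variance more directly by Cauchy--Schwarz and stationarity rather than via covariance decay.
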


Finally, to deal with the control of $\P(\Omega_c)$, we shall need the
next result. First,
we define a quadratic form $Q$ on $\mathbb{H}$ by
%
\begin{equation}
\label{Q} Q(\mathbf{f},\mathbf{g}) =\E_{\P} \bigl(
\kappa_1(\mathbf{f})\kappa_1(\mathbf{g}) \bigr)=
\E_{\P
} \biggl(\frac{1}{T} \int_0^T
\kappa_t(\mathbf{f})\kappa_t(\mathbf{g}) \,\mathrm{d}t
\biggr), \qquad \mathbf{f},\mathbf{g}\in\mathbb{H}.
\end{equation}
We have:

\begin{Prop}\label{eqnormH}
Under the assumptions of Proposition~\ref{momentsHawkes}, if the
function parameter $f_0$ satisfies 
%
\begin{equation}
\label{eq:intbound} \min_{m\in\{1,\ldots,M\}}\nu^{(m)} > 0 \quad \mbox{and}\quad
\max_{l,m\in\{1,\ldots,M\}}\sup_{t \in[0,1]} h^{(m)}_{\ell}(t)
< \infty
\end{equation}
%
then there is a constant $\zeta> 0$ such that for any $\mathbf{f} \in
\mathbb{H}$,
\[
Q(\mathbf{f},\mathbf{f}) \geq\zeta\llVert \mathbf{f}\rrVert ^2.
\]
\end{Prop}

We are now ready to establish oracle inequalities for multivariate
Hawkes processes.
\subsection{Lasso for Hawkes processes}
\label{LassoH}
In the sequel, we still consider the main assumptions of the previous
subsection: We deal with a stationary Hawkes process whose intensity is
given by (\ref{eq:intensity}) such that the spectral radius of $\Gamma$
is strictly smaller than 1 and (\ref{eq:intbound}) is satisfied. We
recall that the components of $\Gamma$ are the $\gamma_{\ell, m} $'s with
\[
\gamma_{\ell, m} = \int_0^1
h^{(m)}_{\ell}(t) \,\mathrm{d} t.
\]
One of the main results of this section is to link properties of the
dictionary (mainly orthonormality but also more involved assumptions)
to properties of $G$ (the control of $\Omega_c$). To do so, let us
define for all $f\in\mathcal{H}$,
\[
\llVert f \rrVert _\infty=\max \Bigl\{\max_{m=1,\ldots,M}
\bigl\llvert \mu^{(m)} \bigr\rrvert , \max_{m,\ell
=1,\ldots,M}\bigl
\llVert g_\ell^{(m)} \bigr\rrVert _\infty \Bigr\}.
\]
Then, let us set $\llVert  \Phi  \rrVert _\infty:=\max\{\llVert  \p  \rrVert _\infty, \p\in\Phi
\}$. 
The next result is based on the probabilistic results of Section~\ref{probaHawkes}.

\begin{Prop}\label{Qnorm}
Assume that the Hawkes process is stationary, that (\ref{eq:intbound})
is satisfied and that the spectral radius of $\Gamma$ is strictly
smaller than $1$. Let $r_\Phi$ be the spectral radius of the matrix
$\mathfrak{H}$ defined by
\[
\mathfrak{H}= \Biggl( \sum_m \Biggl[\bigl
\llvert \mu_\p^{(m)} \bigr\rrvert \bigl\llvert
\mu_\rho^{(m)} \bigr\rrvert +\sum
_{\ell
=1}^M \int_0^1
\bigl\llvert (g_\p)_\ell^{(m)} \bigr\rrvert
\bigl\llvert (g_\rho)_\ell^{(m)} \bigr\rrvert (u)
\,\mathrm{d}u \Biggr] \Biggr)_{\p,\rho\in\Phi}.
\]
Assume that $\Phi$ is orthonormal and that
%
\begin{equation}
\label{Aphi} 
A_\Phi(T):=r_\Phi
\llVert \Phi \rrVert _\infty^2 \llvert \Phi \rrvert \bigl[
\log\bigl(\llVert \Phi \rrVert _\infty \bigr)+\log\bigl(\llvert \Phi
\rrvert \bigr)\bigr]\frac{\log^5(T)}{T}\to0
\end{equation}
when $T\to\infty$. Then, for any $\beta>0$, there exists $C_1>0$
depending on $\beta$ and $f_0$ such that with $c=C_1T$,
we have
\[
\P\bigl(\Omega_c^c\bigr)=\mathrm{O}\bigl(\llvert \Phi \rrvert
^2T^{-\beta}\bigr).
\]
%
\end{Prop}

Up to logarithmic terms, (\ref{Aphi}) is similar to (\ref{cond:Aalen})
with $M$ replaced with $T$.
The dictionary $\Phi$ can be built via a dictionary $(\Upsilon
_k)_{k=1,\ldots,K}$ of functions of $\L{2}([0,1])$ (that may depend on
$T$) in the following way. A function $\p=(\mu^{(m)}_\p, ((g_\p)_\ell^{(m)}
)_\ell)_m$ belongs to $\Phi$ if and only if only one of its $M+M^2$
components is nonzero and in this case,
\begin{itemize}
\item if $\mu^{(m)}_\p\neq0$, then $\mu^{(m)}_\p=1$,
\item if $(g_\p)_\ell^{(m)}\neq0$, then there exists $k\in\{1,\ldots,K\}$
such that $(g_\p)_\ell^{(m)}=\Upsilon_k$.
\end{itemize}
Note that $\llvert  \Phi \rrvert =M+KM^2$. Furthermore, assume from now on that
$(\Upsilon_k)_{k=1,\ldots,K}$ is orthonormal in $\L{2}([0,1])$. Then $\Phi
$ is also orthonormal in ${\mathcal H}$ endowed with $\llVert  \cdot  \rrVert $.

Before going further, let us discuss assumption (\ref{Aphi}). First,
note that the matrix $\mathfrak{H}$ is block diagonal. The first block
is the identity matrix of size $M$. The other $M^2$ blocks are
identical to the matrix:
\[
\mathfrak{H}_K= \biggl(\int\bigl\llvert \Upsilon_{k_1}(u)
\bigr\rrvert \bigl\llvert \Upsilon_{k_2}(u) \bigr\rrvert \,\mathrm {d}u
\biggr)_{1\leq k_1,k_2\leq K}.
\]
So, if we denote $\tilde r_K$ the spectral radius of $\mathfrak{H}_K$,
we have:
\[
r_\Phi=\max(1,\tilde r_{K}).
\]
We analyze the behavior of $\tilde r_K$ with respect to $K$. Note that
for any $k_1$ and any $k_2$,
\[
(\mathfrak{H}_K)_{k_1,k_2}\geq0.
\]
Therefore,
\[
\tilde r_{K}\leq\sup_{\llVert  x  \rrVert _{\ell_1}=1}\llVert
\mathfrak{H}_K x \rrVert _{\ell
_1}\leq\max
_{k_1}\sum_{k_2} (
\mathfrak{H}_K)_{k_1,k_2}.
\]
We now distinguish three types of orthonormal dictionaries (remember
that $M$ is viewed as a constant here):
\begin{itemize}
\item Let us consider regular histograms. The basis is composed of the
functions $\Upsilon_k=\delta^{-1/2} \indic_{((k-1)\delta,k\delta]}$
with $K\delta=1$. Therefore, $\llVert  \Phi  \rrVert _\infty=\delta^{-1/2}=\sqrt {K}$. But $\mathfrak{H}_K$ is the identity matrix and $\tilde{r}_K=1$.
Hence, (\ref{Aphi}) is satisfied as soon as
\[
\frac{K^2\log(K)\log^5(T)}{T}\to0
\]
when $T\to\infty$, which is satisfied if $K=\mathrm{o} (\frac{\sqrt{T}}{\log
^3(T)} )$.
\item Assume that $\llVert  \Phi  \rrVert _\infty$ is bounded by an absolute
constant (Fourier dictionaries satisfy this assumption). Since $\tilde
r_{K}\leq K$,
(\ref{Aphi}) is satisfied as soon as
\[
\frac{K^2\log(K)\log^5(T)}{T}\to0
\]
when $T\to\infty$, which is satisfied if $K=\mathrm{o} (\frac{\sqrt{T}}{\log
^3(T)} )$.
\item Assume that $(\Upsilon_k)_{k=1,\ldots,K}$ is a compactly supported
wavelet dictionary where resolution levels belong to the set $\{
0,1,\ldots,J\}$. In this case, $K$ is of the same order as $2^J$, $\llVert  \Phi \rrVert _\infty$ is of the same order as $2^{J/2}$ and it can be
established that $\tilde r_K\leq C2^{J/2}$ where $C$ is a constant only
depending on the choice of the wavelet system (see \cite{hkpt} for
further details). Then, (\ref{Aphi}) is satisfied as soon as
\[
\frac{K^{5/2}\log(K)\log^5(T)}{T}\to0
\]
when $T\to\infty$, which is satisfied if $K=\mathrm{o} (\frac{T^{2/5}}{\log
^{12/5}(T)} )$.
\end{itemize}
To apply Theorem~\ref{th:oraclewithd}, it remains to control $\Omega_{V,B}$.
Note that
\[
\psi_t^{(m)}(\p)=\lleft\{ %
\begin{array} {l@{\qquad}l} 1&
\mbox{ if } \mu^{(m)}_\p=1,
\\
\displaystyle \int_{t-1}^{t-}\Upsilon_k(t-u)
\,\mathrm{d}N^{(\ell)}_u& \mbox{ if } (g_\p
)_\ell^{(m)}=\Upsilon_k. \end{array}
\rright.
\]
Let us define
\[
\Omega_\mathcal{N}= \bigl\{\mbox{for all } t \in[0,T], \mbox{for all } m
\in\{1,\ldots,M\}\mbox{ we have } N^{(m)}_{[t-1,t]}\leq\mathcal{N}
\bigr\}.
\]
We therefore set
%
\begin{equation}
\label{Bphi} B_\p=1 \qquad \mbox{if } \mu^{(m)}_\p=1
\quad \mbox{and}\quad  B_\p=\llVert \Upsilon _k \rrVert
_\infty \mathcal{N}\qquad  \mbox{if }(g_\p)_\ell^{(m)}=
\Upsilon_k.
\end{equation}
Note that on $\Omega_\mathcal{N}$, for any $\p\in\Phi$,
\[
\sup_{t\in[0,T],m}\bigl\llvert \psi^{(m)}_t(\p)
\bigr\rrvert \leq B_\p.
\]
Now, for each $\p\in\Phi$, let us determine $V_\p$ that constitutes an
upper bound of
\[
M_\p=\sum_{m=1}^M\int
_0^T \bigl[\psi^{(m)}_t(
\p)\bigr]^2 \,\mathrm{d}N^{(m)}_t.
\]
Note that only one term in this sum is nonzero. 
We set
%
\begin{equation}
\label{Vphi} V_\p=\lceil T\rceil\mathcal{N}\qquad  \mbox{if }
\mu^{(m)}_\p=1 \quad \mbox{and}\quad  V_\p = \llVert
\Upsilon_k \rrVert _\infty^2 \lceil T \rceil
\mathcal{N}^3\qquad  \mbox{if }(g_\p)_\ell^{(m)}=
\Upsilon_k,
\end{equation}
where $\lceil T\rceil$ denotes the smallest integer larger than $T$.
With this choice, one has that $\Omega_\mathcal{N}\subset\Omega
_{V,B}$, which leads to the following result.
%

\begin{Cor}\label{OmegaN}
Assume that the Hawkes process is stationary, that (\ref{eq:intbound})
is satisfied and that the spectral radius of $\Gamma$ is strictly
smaller than $1$. With the choices \eqref{Bphi} and \eqref{Vphi},
\[
\P(\Omega_{V,B})\geq\P(\Omega_\mathcal{N})\geq1-C_1T
\exp(-C_2\mathcal{N}),
\]
where $C_1$ and $C_2$ are positive constants depending on $f_0$.

If $\mathcal{N}\gg\log(T)$, then for all $\beta>0$,
\[
\P\bigl(\Omega_{V,B}^c\bigr)\leq\P\bigl(
\Omega_\mathcal{N}^c\bigr)=\mathrm{o}\bigl(T^{-\beta}\bigr).
\]
%
\end{Cor}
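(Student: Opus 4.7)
The proof naturally splits into two parts: the deterministic inclusion $\Omega_\mathcal{N}\subset\Omega_{V,B}$ and the probabilistic tail bound on $\P(\Omega_\mathcal{N}^c)$.

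For the inclusion, I would argue pointwise on $\Omega_\mathcal{N}$ and separate the two types of $\p\in\Phi$. If $\mu_\p^{(m)}=1$, then $\psi_t^{(m)}(\p)=1$ and the choice $B_\p=1$ is trivial, while $(\psi(\p))^2\bullet N_T = N^{(m)}_T$. On $\Omega_\mathcal{N}$ we have $N^{(m)}_{[k-1,k]}\leq\mathcal{N}$ for every integer $k\in\{1,\ldots,\lceil T\rceil\}$, hence $N^{(m)}_T\leq\lceil T\rceil\mathcal{N}=V_\p$. If instead $(g_\p)_\ell^{(m)}=\Upsilon_k$, then $\psi_t^{(m)}(\p)=\int_{t-1}^{t^-}\Upsilon_k(t-u)\,dN^{(\ell)}_u$, which is bounded by $\|\Upsilon_k\|_\infty N^{(\ell)}_{[t-1,t]}\leq\|\Upsilon_k\|_\infty\mathcal{N}=B_\p$ on $\Omega_\mathcal{N}$; plugging this into the integrand of $(\psi(\p))^2\bullet N_T$ and using once more $N^{(m)}_T\leq\lceil T\rceil\mathcal{N}$ yields the stated $V_\p=\|\Upsilon_k\|_\infty^2\lceil T\rceil\mathcal{N}^3$.

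The main task is the tail bound on $\Omega_\mathcal{N}^c$, and the key obstacle is the uncountable supremum over $t\in[0,T]$. I would handle this with a one-step discretization: for any $t\in[0,T]$ there exists an integer $k\in\{0,\ldots,\lceil T\rceil\}$ with $t\in[k,k+1]$, and then $[t-1,t]\subset[k-1,k+1]$, so $N^{(m)}_{[t-1,t]}\leq N^{(m)}_{[k-1,k+1]}$. Thus
\begin{equation*}
\Omega_\mathcal{N}^c\subset\bigcup_{m=1}^{M}\bigcup_{k=0}^{\lceil T\rceil}\bigl\{N^{(m)}_{[k-1,k+1]}>\mathcal{N}\bigr\}.
\end{equation*}
By stationarity, each event has the same probability as $\{N^{(m)}_{[-2,0)}>\mathcal{N}\}$. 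I now apply Proposition~\ref{momentsHawkes} with $A=2$: since the spectral radius of $\Gamma$ is strictly less than $1$ and the interactions have bounded support, there exist constants $\theta>0$ and $\mathcal{E}<\infty$ depending only on $f^*$ such that $\P(N^{(m)}_{[-2,0)}>\mathcal{N})\leq\mathcal{E}e^{-\theta\mathcal{N}}$ (the proposition is stated for the total number of points, which dominates each coordinate). A union bound over the $M(\lceil T\rceil+1)$ events then gives
\begin{equation*}
\P(\Omega_\mathcal{N}^c)\leq M(\lceil T\rceil+1)\mathcal{E}e^{-\theta\mathcal{N}}\leq C_1 T\exp(-C_2\mathcal{N}),
\end{equation*}
with $C_1,C_2>0$ depending only on $f^*$ and $M$.

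The final asymptotic claim is immediate: if $\mathcal{N}/\log T\to\infty$, then for any $\beta>0$ and for $T$ large enough $C_2\mathcal{N}\geq(\beta+1)\log T$, so $C_1T\exp(-C_2\mathcal{N})\leq C_1 T^{-\beta}$, which is $o(T^{-\beta})$ as required. The inclusion $\P(\Omega_{V,B})\geq\P(\Omega_\mathcal{N})$ combined with this bound completes the corollary.
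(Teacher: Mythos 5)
Your proof is correct and follows essentially the same route as the paper: reduce the uncountable supremum over $t\in[0,T]$ to a finite covering by unit-scale intervals, invoke Proposition~\ref{momentsHawkes} on each, and conclude with a union bound. The paper tiles $[-1,T]$ with $\lfloor T\rfloor+2$ disjoint length-$1$ intervals and requires each count to be $\leq\mathcal{N}/2$ (so that $[t-1,t]$, contained in two tiles, has count $\leq\mathcal{N}$), applying Proposition~\ref{momentsHawkes} with $A=1$; you instead use overlapping length-$2$ windows $[k-1,k+1]$ and apply the proposition with $A=2$. This is a cosmetic difference with no effect on the conclusion.
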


We are now ready to apply Theorem~\ref{th:oraclewithd}.
%

\begin{Cor}\label{etpourH}
Assume that the Hawkes process is stationary, that (\ref{eq:intbound})
is satisfied and that the spectral radius of $\Gamma$ is strictly
smaller than $1$. Assume that the dictionary $\Phi$ is built as
previously from an orthonormal family $(\Upsilon_k)_{k=1,\ldots,K}$. With
the notations of Theorem~\ref{th:oraclewithd}, let $B_\p$ be defined by
\eqref{Bphi} and $d_\p$ be defined accordingly with $x=\alpha\log(T)$.
Then, with probability larger than
\[
1- 4 \bigl(M+M^2K\bigr) \biggl(\frac{\log (1+\afrac{\mu\lceil T \rceil{\mathcal
N}}{\alpha\log(T)} )}{\log(1+\e)}+1
\biggr)T^{-\alpha}-\P\bigl(\Omega _{\mathcal{N}}^c\bigr)-\P
\bigl(\Omega_c^c\bigr),
\]
\[
\frac{1}{T}\llVert \hat{f}-f_0 \rrVert ^2_T
\leq C \inf_{a\in\R^{\Phi}} \biggl\{ \frac{1}{T}\llVert
f_0-f_a\rrVert ^2_T +\sum
_{\p\in S(a)} \biggl(\frac{\log
(T)(\psi(\p))^2\bullet N_T}{T^2}+\frac{B_\p^2\log^2(T)}{T^2}
\biggr) \biggr\},
\]
where $C$ is a constant depending on $f_0$, $\mu$, $ \e$, and $\alpha$.

From an asymptotic point of view, if the dictionary also satisfies
\eqref{Aphi}, and if $\mathcal{N}=\log^2(T)$ in \eqref{Bphi}, then for
$T$ large enough with probability larger than $1-C_1K\log(T)T^{-\alpha}$
\[
\frac{1}{T}\llVert \hat{f}-f_0 \rrVert ^2_T
\leq C_2 \inf_{a\in\R^{\Phi}} \biggl\{ \frac{1}{T}
\llVert f_0-f_a\rrVert ^2_T +
\frac{\log^3(T)}{T} \sum_{\p\in
S(a)} \biggl[
\frac{1}{T}\llVert \p \rrVert ^2_T+
\frac{\log^{7/2}(T)}{\sqrt{T}}\llVert \Phi \rrVert _\infty^2 \biggr]
\biggr\},
\]
where $C_1$ and $C_2$ are constants depending on $M$, $f_0$, $\mu$, $ \e
$, and $\alpha$.
\end{Cor}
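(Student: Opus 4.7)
\medskip

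\noindent\textbf{Proof plan.} The first inequality is a direct application of Theorem~\ref{th:oraclewithd} with $x=\alpha\log(T)$. The key observation is that for both types of dictionary elements in \eqref{Bphi}--\eqref{Vphi}, one has $V_\p/B_\p^2=\lceil T\rceil\mathcal{N}$, so that the $\p$-dependent logarithmic prefactor in the exceptional probability of Theorem~\ref{th:oraclewithd} collapses to the single term appearing in the statement. Since $|\Phi|=M+M^2K$ and $\Omega_\mathcal{N}\subset\Omega_{V,B}$ by construction (the $B_\p$ and $V_\p$ in \eqref{Bphi}--\eqref{Vphi} were tailored precisely so that all their defining inequalities hold on $\Omega_\mathcal{N}$), the probability bound is immediate. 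Expanding $d_\p^2$ from \eqref{d} via $(\sqrt{u}+v)^2\leq 2u+2v^2$ gives $d_\p^2\lesssim\log(T)(\psi(\p))^2\bullet N_T+B_\p^2\log^2(T)$ with constants depending on $\mu,\e,\alpha$; taking $c$ implicitly of order $T$ (the constant $C$ in the corollary absorbing the reciprocal), the first claimed oracle inequality then follows from Theorem~\ref{th:oracle} on $\Omega_c$.

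\medskip

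For the asymptotic refinement I would set $\mathcal{N}=\log^2(T)$, so that Corollary~\ref{OmegaN} kills $\P(\Omega_\mathcal{N}^c)$ at arbitrary polynomial rate, while Assumption \eqref{Aphi} combined with Proposition~\ref{Qnorm} provides $c=C_1T$ together with $\P(\Omega_c^c)=O(T^{-\alpha})$. The prefactor $\log(1+\mu\lceil T\rceil\mathcal{N}/(\alpha\log T))$ is now of order $\log(T)$ so the exceptional probability from the first inequality is $O(K\log(T)T^{-\alpha})$ as $M$ is treated as a constant. The remaining task is to replace the random quantity $(\psi(\p))^2\bullet N_T$ in the variance term by the deterministic $\norm{\p}_T^2$ up to an absorbable residual. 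To this end I would decompose
\begin{equation*}
(\psi(\p))^2\bullet N_T=(\psi(\p))^2\bullet\Lambda_T+\psi(\p)^2\bullet(N-\Lambda)_T.
\end{equation*}
On $\Omega_\mathcal{N}$, the identity \eqref{eq:intensity} together with the boundedness assumption \eqref{eq:intbound} gives the pointwise bound $\lambda_t\m\leq C\mathcal{N}$, so the first summand is at most $C\log^2(T)\norm{\p}_T^2$. For the martingale summand I would apply Theorem~\ref{th:weakBer} to the predictable process $H=\psi(\p)^2$: on $\Omega_\mathcal{N}$, $|H|\leq B_\p^2$, and from $H^2\leq B_\p^2\psi(\p)^2$ the predictable bracket is controlled by $B_\p^2\, C\mathcal{N}\norm{\p}_T^2$. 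A single union bound over $\p\in\Phi$ then produces $|\psi(\p)^2\bullet(N-\Lambda)_T|\lesssim B_\p\log^{3/2}(T)\norm{\p}_T+B_\p^2\log(T)$ outside an event of probability $O(K\log(T)T^{-\alpha})$.

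\medskip

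Putting these estimates together,
\begin{equation*}
\frac{\log(T)}{T^2}(\psi(\p))^2\bullet N_T\lesssim\frac{\log^3(T)\norm{\p}_T^2}{T^2}+\frac{B_\p\log^{5/2}(T)\norm{\p}_T}{T^2}+\frac{B_\p^2\log^2(T)}{T^2},
\end{equation*}
and the AM--GM inequality $2ab\leq a^2+b^2$ with $a=B_\p\log(T)$, $b=\norm{\p}_T\log^{3/2}(T)$ absorbs the cross term into the other two. Since $B_\p^2\leq\norm{\Phi}_\infty^2\mathcal{N}^2=\norm{\Phi}_\infty^2\log^4(T)$ and $\log^6(T)/T^2\leq\log^{13/2}(T)/T^{3/2}$ for $T$ large enough, the remaining $B_\p^2\log^2(T)/T^2$ is controlled by $\norm{\Phi}_\infty^2\log^{13/2}(T)/T^{3/2}$, matching exactly the second bracketed contribution in the target. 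The main obstacle is precisely this martingale concentration step: the sup-norm bound on $H=\psi(\p)^2$ only holds on $\Omega_\mathcal{N}$, not almost surely, so Theorem~\ref{th:weakBer} must be invoked with its built-in event $\{\sup|H|\leq B\}$, and the peeling parameters $v,w,\e$ must be picked so that the $\log(v/w)$ factor stays of order $\log(T)$ uniformly in $\p$, yielding the stated exceptional probability after the union bound.
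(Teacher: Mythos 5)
Your proposal is essentially correct and follows the same two-stage strategy as the paper: first, a direct application of Theorem~\ref{th:oraclewithd} with $x=\alpha\log(T)$, noting that $V_\p/B_\p^2=\lceil T\rceil\mathcal{N}$ for both types of dictionary element and that $\Omega_\mathcal{N}\subset\Omega_{V,B}$; second, a decomposition $(\psi(\p))^2\bullet N_T=(\psi(\p))^2\bullet\Lambda_T+\psi(\p)^2\bullet(N-\Lambda)_T$, with the compensator term bounded on $\Omega_\mathcal{N}$ by $C\mathcal{N}\norm{\p}_T^2$ and the martingale term controlled by a concentration inequality and a union bound over $\Phi$. The expansion of $d_\p^2$ and the final bookkeeping ($\mathcal{N}=\log^2(T)$, $B_\p^2\leq\norm{\Phi}_\infty^2\log^4(T)$, absorption of the cross term by AM--GM) match the paper's computation.

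One clarifying remark on the martingale step. You invoke Theorem~\ref{th:weakBer}, but the quantity you propose to control, the \emph{predictable} bracket $H^2\bullet\Lambda_T\leq B_\p^2C\mathcal{N}\norm{\p}_T^2$, does not appear in Theorem~\ref{th:weakBer}, which is stated in terms of the quadratic variation $H^2\bullet N_\tau$ via $\hat{V}^\mu$. What you describe is really Proposition~\ref{weakberLambda} (the $H^2\bullet\Lambda_\tau$ version obtained along the way). If you actually applied Theorem~\ref{th:weakBer} to $H=\psi(\p)^2$, you would find $H^2\bullet N_\tau=\psi(\p)^4\bullet N_T\leq B_\p^2\psi(\p)^2\bullet N_T$, producing a self-referential quadratic inequality in $\sqrt{\psi(\p)^2\bullet N_T}$; this can be solved by completing the square and still yields the result, but it is a slightly different computation than the one you write down. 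The paper itself sidesteps both the peeling bookkeeping and the self-reference by using the one-shot inequality \eqref{sho} with a fully deterministic upper bound $v=TM(\max_m\nu\m+\mathcal{N}\max_{m,\ell}\norm{h_\ell\m}_\infty)B_\p^4$ on the predictable bracket, which costs no $\log(v/w)$ factor and gives a martingale deviation of order $B_\p^2\sqrt{T\mathcal{N}\log(T)}$; your route keeps $\norm{\p}_T$ in the deviation, which is marginally sharper but requires more care about which quantities are deterministic when invoking the peeling lemma. Both reach the stated display, so this is a cosmetic difference rather than a gap, but you should either cite Proposition~\ref{weakberLambda} (or \eqref{sho}) in place of Theorem~\ref{th:weakBer}, or carry through the self-referential resolution if you insist on the quadratic-variation version.
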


We express the oracle inequality by using $\frac{1}{T}\llVert  \cdot  \rrVert _T$
simply because, when $T$ goes to $+\infty$, by ergodicity of the
process (see, for instance, \cite{DVJ}, and Proposition~\ref{flow} for
a nonasymptotic statement),
\[
\frac{1}{T}\llVert f \rrVert _T^2=\sum
_{m=1}^M\frac{1}{T}\int_0^T
\bigl(\kappa_t\bigl(\mathbf {f}^{(m)}\bigr)
\bigr)^2\,\mathrm{d}t\longrightarrow\sum_{m=1}^M
Q\bigl(\mathbf{f}^{(m)},\mathbf {f}^{(m)}\bigr)
\]
under assumptions of Proposition~\ref{Qnorm}. Note that the right-hand
side is a true norm on ${\mathcal H}$ by Proposition~\ref{eqnormH}.
Note also that
\[
\frac{\log^{7/2}(T)}{\sqrt{T}}\llVert \Phi \rrVert _\infty^2\stackrel{T
\to\infty } {\to} 0,
\]
as soon as \eqref{Aphi} is satisfied for the Fourier basis and
compactly supported wavelets. It is also the case for histograms as
soon as $K=\mathrm{o} (\frac{\sqrt{T}}{\log^{7/2}(T)} )$.
Therefore, this term can be viewed as a residual one. In those cases,
the last inequality can be rewritten as
\[
\frac{1}{T}\llVert \hat{f}-f_0 \rrVert ^2_T
\leq C \inf_{a\in\R^{\Phi}} \biggl\{ \frac{1}{T}\llVert
f_0-f_a\rrVert ^2_T +
\frac{\log^3(T)}{T} \sum_{\p\in
S(a)}\frac{1}{T}
\llVert \p \rrVert ^2_T \biggr\},
\]
for a different constant $C$, the probability of this event tending to
1 as soon as $\alpha\geq1/2$ in the Fourier and histogram cases and
$\alpha\geq2/5$ for the compactly supported wavelet basis. Once again,
as mentioned for the Poisson or Aalen models, the right-hand side
corresponds to a classical ``bias-variance'' trade off and we obtain a
classical oracle inequality up to the logarithmic terms. Note that
asymptotics is now with respect to $T$ and not with respect to $M$ as
for Poisson or Aalen models. So, the same result, namely Theorem~\ref
{th:oraclewithd}, allows to consider both asymptotics.

%
%
%
%

\section{Simulations for the multivariate Hawkes process}\label{sec:simu}
This section is devoted to illustrations of our procedure on simulated
data of multivariate Hawkes processes and comparisons with the
well-known adaptive Lasso procedure proposed by \cite{Zou}. We consider
the general case and we do no longer assume that the functions $h_\ell^{(m)}
$ are nonnegative as in Section~\ref{sec:hawkes}.
However, if the parameter $\nu^{(m)}$ is large with respect to the $h_\ell^{(m)}
$'s, then $\psi^{(m)}(f_0)$ is nonnegative with large probability and
therefore $\lambda^{(m)}=\psi^{(m)}(f_0)$ with large probability. Hence,
Theorem~\ref{th:oraclewithd} implies that $\hat{f}$ is close to $f_0$.
\subsection{Description of the data}
As mentioned in the introduction, Hawkes processes can be used in
neuroscience to model the action potentials of individual neurons. So,
we perform simulations whose parameters are close, to some extent, to
real neuronal data.
For a given neuron $m\in\{1,\ldots,M\}$, we recall that its activity
is modeled by a point process $N^{(m)}$ whose intensity is
\[
\lambda_t^{(m)}= \Biggl(\nu^{(m)}+ \sum
_{\ell=1}^M \int_{-\infty}^{t-}h_\ell
^{(m)}(t-u) \,\mathrm{d}N^{(\ell)} (u) \Biggr)_+.
\]
%
The \textit{interaction function} $h^{(m)}_\ell$ represents the influence
of the past activity of the neuron $\ell$ on the neuron $m$. The \textit
{spontaneous rate} $\nu^{(m)}$ may somehow represent the external
excitation linked to all the other neurons that are not recorded. It is
consequently of crucial importance not only to correctly infer the
interaction functions, but also to reconstruct the spontaneous rates
accurately. Usually, activity up to 10 neurons can be recorded in a
``stationary'' phase during a few seconds (sometimes up to one minute).
Typically, the points frequency is of the order of 10--80 Hz and the
interaction range between points is of the order of a few milliseconds
(up to 20 or 40 ms).
We first lead three experiments in the \textit{pure excitation case}
where all the interaction functions are nonnegative by simulating
multivariate Hawkes processes (two with $M=2$, one with $M=8$) based on
these typical values. More precisely, we take for any $m\in\{1,\dots,M\}
$, $\nu^{(m)}=20$ and the interaction functions $h^{(m)}_\ell$ are defined as
follows (supports of all the functions are assumed to lie in the
interval $[0,0.04]$):

\begin{Experiment}[($M=2$ and piecewise constant functions)]\label{ex1}
\[
h_1^{(1)}=30\times\indic_{(0,0.02]},\qquad
h_2^{(1)}=30\times\indic _{(0,0.01]},\qquad
h_1^{(2)}=30\times\indic_{(0.01,0.02]},\qquad
h_2^{(2)}=0.
\]
In this case, each neuron depends on the other one. The spectral radius
of the matrix $\Gamma$ is 0.725.
\end{Experiment}

\begin{Experiment}[($M=2$ and ``smooth'' functions)]\label{ex2}
 In this
experiment, $h_1^{(1)}$ and $h_1^{(2)}$ are not piecewise constant.
\begin{eqnarray*}
h_1^{(1)}(x)&=&100 \mathrm{e}^{-200x}\times
\indic_{(0,0.04]}(x) ,\qquad  h_2^{(1)}(x)=30\times
\indic_{(0,0.02]}(x),
\\
h_1^{(2)}(x)&=&\frac{1}{0.008\sqrt{2\uppi} }\mathrm{e}^{-\frac
{(x-0.02)^2}{2*0.004^2}}\times
\indic_{(0,0.04]}(x),\qquad  h_2^{(2)}(x)=0.
\end{eqnarray*}
In this case, each neuron depends on the other one as well. The
spectral radius of the matrix $\Gamma$ is 0.711.
\end{Experiment}

\begin{Experiment}[($M=8$ and piecewise constant functions)]\label{ex3}
\[
h_2^{(1)}=h_3^{(1)}=h_2^{(2)}=h_1^{(3)}=h_2^{(3)}=h_8^{(5)}=h_5^{(6)}=h_6^{(7)}=h_7^{(8)}=25
\times \indic_{(0,0.02]}
\]
and all the other $55$ interaction functions are equal to 0. Note in
particular that this leads to 3 independent groups of dependent neurons
$\{1,2,3\}$, $\{4\}$ and $\{5,6,7,8\}$. The spectral radius of the
matrix $\Gamma$ is 0.5.
\end{Experiment}

We also lead one experiment in the \emph{pure inhibition case} where
all the interaction functions are nonpositive:

\begin{Experiment}[($M=2$)]\label{ex4} In this experiment, the interaction
functions are the opposite of the functions introduced in Experiment~\ref{ex2}.
We take for any $m\in\{1,\dots,M\}$, $\nu^{(m)}=60$ so that $\psi_t(f_0)$
is positive with high probability.
\end{Experiment}

For each simulation, we let the process ``warm up'' during 1 second to
reach the stationary state.\footnote{Note that since the size of the
support of the interaction functions is less or equal to $0.04$, the
``warm up'' period is 25 times the interaction range.} Then the data are
collected by taking recordings during the next $T$ seconds. For
instance, we record about 100 points per neuron when $T=2$ and 1000
points when $T=20$. Figure~\ref{raster} shows two instances of data
sets with $T=2$.

\begin{figure}

\includegraphics{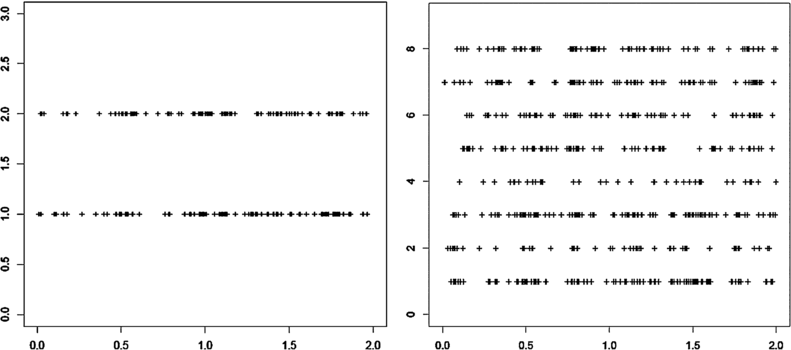}

\caption{Raster plots of two data sets with $T=2$
corresponding to Experiment~\protect\ref{ex2} on the left and Experiment~\protect\ref{ex3} on the
right. The $x$-axis correspond to the time of the experiment. Each line
with ordinate $m$ corresponds to the points of the process $N^{(m)}$. From
bottom to top, we observe 124 and 103 points for Experiment~\protect\ref{ex2} and 101,
60, 117, 38, 73, 75, 86 and 86 points for Experiment~\protect\ref{ex3}.}
\label{raster}
\end{figure}
\subsection{Description of the methods}
To avoid approximation errors when computing the matrix $G$, we focus
on a dictionary $(\Upsilon_k)_{k=1,\ldots,K}$ whose functions are
piecewise constant. More precisely, we take $\Upsilon_k=\linebreak[4] \delta
^{-1/2}\indic_{((k-1)\delta,k\delta]}$ with $\delta=0.04/K$ and $K$,
the size of the dictionary, is chosen later.

Our practical procedure strongly relies on the theoretical one based on
the $d_\p$'s defined in (\ref{d}), with $x$, $\mu$ and $\e$ to be
specified. First, using Corollary~\ref{etpourH}, we naturally take
$x=\alpha\log(T)$. Then, three hyperparameters would need to be tuned,
namely $\alpha$, $\mu$ and $\e$, if we directly used the Lasso estimate
of Theorem~\ref{th:oraclewithd}. So, for simplifications, we implement
our procedure by replacing the Lasso parameters $d_\p$ with
\[
\tilde d_\p(\gamma)=\sqrt{2\gamma\log(T) \bigl(\psi(\p)
\bigr)^2\bullet N_T} + \frac{\gamma\log(T)}{3} \sup
_{t\in[0,T],m}\bigl\llvert \psi^{(m)}_t(\p) \bigr
\rrvert ,
\]
where $\gamma$ is a constant to be tuned. Besides taking $x=\alpha\log
(T)$, our modification consists in neglecting the linear part $\frac
{B_\p^2x}{\mu-\phi(\mu)}$ in $\hat{V}^\mu$ and replacing $B_\p$ with
$\sup_{t\in[0,T],m}\llvert  \psi^{(m)}_t(\p) \rrvert $.\vspace*{2pt}
Then, note that, up to these modifications, the choice $\gamma=1$
corresponds to the limit case where $\alpha\to1$, $\e\to0$ and $\mu
\to0$ in the definition of the $d_\p$'s (see the comments after
Theorem~\ref{th:oraclewithd}). Note also that, under the slight abuse
consisting in identifying $B_\p$ with $\sup_{t\in[0,T],m}\llvert  \psi^{(m)}_t(\p
) \rrvert $, for every parameter $\mu$, $\e$ and $\alpha$ of Theorem~\ref
{th:oraclewithd} with $x=\alpha\ln(T)$, one can find two parameters
$\gamma$ and $\gamma'$ such that
\[
\tilde d_\p(\gamma)\leq d_\p\leq\tilde
d_\p\bigl(\gamma'\bigr).
\]
Therefore, this practical choice is consistent with the theory and
tuning hyperparameters reduces to only tuning $\gamma$. Our simulation
study will provide sound answers to the question of tuning $\gamma$.

We compute the Lasso estimate by using the shooting method of \cite{Fu}
and the \texttt{R}-package \texttt{Lassoshooting}.
In particular, we need to invert the matrix $G$. In all simulations,
this matrix was invertible,
which is consistent with the fact that $\Omega_c$ happens with large
probability. Note also that the value of
$c$, namely the smallest eigenvalue of $G$, can be very small (about
$10^{-4}$) whereas the largest eigenvalue
is potentially as large as $10^{5}$, both values highly depending on
the simulation and on $T$. Fortunately,
those values are not needed to compute our Lasso estimate. Since it is
based on \textit{Bernstein type inequalities},
our Lasso method is denoted \textbf{B} in the sequel.

Due to their soft thresholding nature, Lasso methods are known to
underestimate the coefficients \cite{Mein,Zou}.
To overcome biases in estimation due to shrinkage, we propose a two
steps procedure, as usually suggested in the
literature: Once the support of the vector has been estimated by
\textbf{B}, we compute the ordinary least-square
estimator among the vectors $a$ having the same support, which
provides the final estimate. This method is denoted \textbf{BO} in the sequel.

Another popular method is \textit{adaptive Lasso} proposed by Zou \cite
{Zou}. This method overcomes the flaws of standard Lasso by taking $\ell
_1$-weights of the form
\[
d^a_\p(\gamma)=\frac{\gamma}{2\llvert  \hat{a}_{\p}^o \rrvert ^p},
\]
where $p>0$, $\gamma>0$ and $\hat{a}_{\p}^o$ is a preliminary
consistent estimate of the true coefficient. Even if the shapes of the
weights are different, the latter are data-driven and this method
constitutes a natural competitive method with ours. The most usual
choice, which is adopted in the sequel, consists in taking $p=1$ and
the ordinary least squares estimate for the preliminary estimate (see
\cite{HMZ,vdGBZ,Zou}). Then, penalization is stronger for coefficients
that are preliminary estimated by small values of the ordinary least
square estimate. In the literature, the parameter $\gamma$ of adaptive
Lasso is usually tuned by cross-validation, but this does not make
sense for Hawkes data that are fully dependent. Therefore, a
preliminary study has been performed to provide meaningful values for~$\gamma$.
Results are given in the next section. This adaptive Lasso
method is denoted \textbf{A} in the sequel and \textbf{AO} when
combined with ordinary least squares in the same way as for \textbf{BO}.

Simulations are performed in \texttt{R}. The computational time is
small (merely a few seconds for one estimate even when $M=8$, $T=20$
and $K=8$ on a classical laptop computer), which constitutes a clear
improvement with respect to existing adaptive methods for Hawkes
processes. For instance, the ``Islands'' method\footnote{This method
developed for $M=1$ could easily be theoretically adapted for larger
values of $M$, but its extreme computational cost prevents us from
using it in practice.} of \cite{RBS} is limited to the estimation
of one or two dozens of coefficients at most, because of an extreme
computational memory cost whereas here when $M=8$ and $K=8$, we can
easily deal with $M+KM^2=520$ coefficients.

\subsection{Results}\label{results}
%
\begin{figure}

\includegraphics{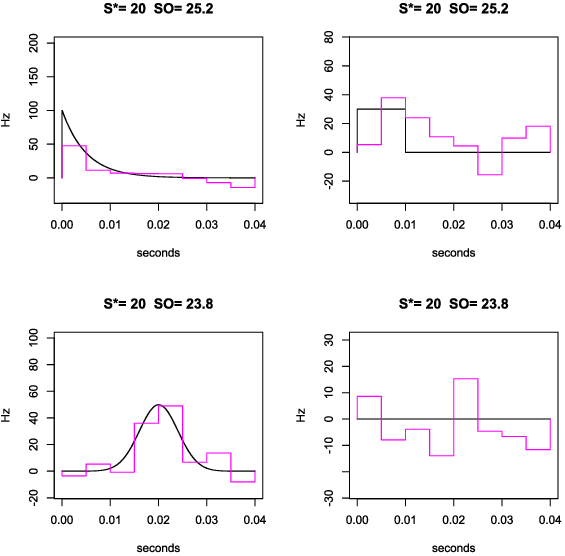}

\caption{Reconstructions corresponding to Experiment~\protect\ref{ex2} with
the OLS estimate with $T=2$ and $K=8$. Each line $m$ represents the
function $h_\ell^{(m)}$, for $\ell=1,2$. The spontaneous rates
associated with each line $m$ are given above the graphs where S$^*$
denotes the true spontaneous rate and its estimator is denoted by SO.
The true interactions functions are plotted in black whereas the OLS
estimates are plotted in magenta.}
\label{OLS}
\end{figure}

First, we provide in Figure~\ref{OLS} reconstructions by using the OLS
estimate on the whole dictionary, which corresponds to the case where
all the weights $d_\p$ are null. As expected, reconstructions are not
sparse and also bad due to a small signal to noise ratio (remember that
$T=2$).

Now let us consider methods leading to sparsity. A precise study over
100 simulations has been carried out corresponding to Experiments 1
and 3 for which we can precisely check if the support of the vector
$\hat{a}$ is the correct one. For each method, we have selected 3
values for the hyperparameter $\gamma$ based on results of preliminary
simulations. Before studying mean squared errors, we investigate
the following problems that are stated in order of importance. We wonder
whether our procedure can identify:
\begin{itemize}
\item[-] \textit{the dependency groups}. Recall that two neurons belong
to the same group if and only if they are connected directly or through
the intermediary of one or several neurons. This issue is essential
from the neurobiological point of view since knowing interactions
between two neurons is of capital importance.
\item[-] \textit{the nonzero interaction functions $h_\ell^{(m)}$'s
and nonzero spontaneous rates $\nu^{(m)}$'s}. For $\ell,m\in\{1,\ldots
,M\}$, the neuron $\ell$ has a significative \emph{direct} interaction
on neuron $m$ if and only if $h_\ell^{(m)}\neq0$;
\item[-] \textit{the nonzero coefficients of nonzero interaction
functions}. This issue is more mathematical. However, it may provide
information about the maximal range for direct interactions between two
given neurons or about the favored delay of interaction.
\end{itemize}
Note that the dependency groups are the only features that can be
detected by classical analysis tools of neuroscience, such as the
Unitary Events method \cite{grun}. In particular, to the best of our
knowledge, identification of the nonzero interaction functions inside
a dependency group is a problem that has not been solved yet as far as
we know.
%
\begin{sidewaystable}
 \tablewidth=\textwidth
 \tabcolsep=0pt
 \caption{Numerical results of both procedures over 100
runs with $K=4$. Results for Experiment~\protect\ref{ex1} (top) and Experiment~\protect\ref{ex3}
(bottom) are given for $T=2$ (left) and $T=20$ (right). ``DG'' gives the
number of correct identifications of dependency groups over 100 runs.
``S'' gives the median number of nonzero spontaneous rate estimates,
``*'' means that all the spontaneous rate estimates are nonzero over
all the simulations. ``F$+$'' gives the median number of additional
nonzero interaction functions w.r.t. the truth. ``F$-$'' gives the median
number of missing nonzero interaction functions w.r.t. the truth.
``Coeff$+$'' and ``Coeff$-$'' are defined in the same way for the
coefficients. ``SpontMSE'' is the Mean Square Error for the spontaneous
rates with or without the additional ``ordinary least squares step''.
``InterMSE'' is the analog for the interaction functions. In bold, we
give the optimal values}\label{tableN2}
\begin{tabular*}{\textwidth}{@{\extracolsep{\fill}}ld{3.1}d{3.0}d{3.0}d{3.1}d{3.0}d{3.1}ld{3.1}d{3.1}d{3.1}d{3.3}d{3.1}d{3.2}@{}}
\hline
 \multicolumn{1}{l}{$M=2$, $T=2$} & \multicolumn{3}{l}{Our Lasso method}&
\multicolumn{3}{l}{Adaptive Lasso}&\multicolumn{1}{l}{$M=2$, $T=20$} &
\multicolumn{3}{l}{Our Lasso method}&\multicolumn{3}{l}{Adaptive Lasso}\\[-5pt]
 \multicolumn{1}{l}{\hrulefill} & \multicolumn{3}{l}{\hrulefill}&
\multicolumn{3}{l}{\hrulefill}&\multicolumn{1}{l}{\hrulefill} &
\multicolumn{3}{l}{\hrulefill}&\multicolumn{3}{l}{\hrulefill}\\
 \multicolumn{1}{l}{$\gamma$} & \multicolumn{1}{l}{0.5} & \multicolumn{1}{l}{1} & \multicolumn{1}{l}{2} & \multicolumn{1}{l}{2}
 & \multicolumn{1}{l}{200} & \multicolumn{1}{l}{1000} &\multicolumn{1}{l}{$\gamma$} & \multicolumn{1}{l}{0.5} & \multicolumn{1}{l}{1}
  & \multicolumn{1}{l}{2} & \multicolumn{1}{l}{2}
& \multicolumn{1}{l}{200} & \multicolumn{1}{l}{1000} \\
\hline
DG & \red{100} & \red{100} & 98 & \red{100} & \red{100} & 98 &DG &
\red{100}& \red{100} & \red{100} & \red{100}& \red{100}& \red{100}\\
S & \red{*} & \red{*} & \red{*} & 2 & 2 & 1 & S & \red{*} & \red{*}
& \red{*} & \red{*} & \red{*} & \red{*} \\
F$+$ & \red{0} & \red{0} & \red{0} & 1 & \red{0} & \red{0} &F$+$ &
\red{0} & \red{0} & \red{0} & 1 & \red{0} & \red{0} \\
F$-$ & \red{0} & \red{0} & \red{0} & \red{0} & \red{0} & \red{0} &F$-$ &
\red{0} & \red{0} & \red{0} & \red{0} & \red{0} & \red{0} \\
Coeff$+$ & 2 & 1 & \red{0} & 11 & 2 & \red{0} &Coeff$+$ & 1 & \red{0} &
\red{0} & 11 & 2 & \red{0}\\
Coeff$-$ & \red{0} & \red{0} & \red{0} & \red{0} & \red{0} &
\red{0}&Coeff$-$ & \red{0} & \red{0} & \red{0} & \red{0} & \red{0} &
\red{0}\\
SpontMSE &  108 & 140 &214 & 150 & 193 & 564&
SpontMSE &  22 & 37 & 69 & 14 & 12 & 27\\
 \multicolumn{1}{r}{$+$ols} & 104 & 96 & \red{95} & 151 & 154 & 516 & \multicolumn{1}{r}{$+$ols}& 11 & 10 &
\red{9} & 14 & 12 & 10 \\
InterMSE &  \red{7} & 9 & 15 & 13 & 8 & 11&
InterMSE &  2 & 3 & 6 & 1.4 & 0.6 & 0.5\\
\multicolumn{1}{r}{$+$ols}& \red{7} & \red{7} & \red{7} & 14 & 10 & 10& \multicolumn{1}{r}{$+$ols} & 0.6 &
0.5 & \red{0}.\red{4} & 1.4 & 0.9 & \red{0}.\red{4}\\
 \hline
 \multicolumn{1}{l}{$M=2$, $T=2$} & \multicolumn{3}{l}{Our Lasso method}&
\multicolumn{3}{l}{Adaptive Lasso}&\multicolumn{1}{l}{$M=2$, $T=20$} &
\multicolumn{3}{l}{Our Lasso method}&\multicolumn{3}{l}{Adaptive Lasso}\\[-5pt]
 \multicolumn{1}{l}{\hrulefill} & \multicolumn{3}{l}{\hrulefill}&
\multicolumn{3}{l}{\hrulefill}&\multicolumn{1}{l}{\hrulefill} &
\multicolumn{3}{l}{\hrulefill}&\multicolumn{3}{l}{\hrulefill}\\
\multicolumn{1}{l}{$\gamma$} & \multicolumn{1}{l}{0.5} & \multicolumn{1}{l}{1} & \multicolumn{1}{l}{2} & \multicolumn{1}{l}{2}
 & \multicolumn{1}{l}{200} & \multicolumn{1}{l}{1000} &\multicolumn{1}{l}{$\gamma$} & \multicolumn{1}{l}{0.5} & \multicolumn{1}{l}{1}
  & \multicolumn{1}{l}{2} & \multicolumn{1}{l}{2}
& \multicolumn{1}{l}{200} & \multicolumn{1}{l}{1000} \\
\hline
DG & 0 & \red{32} & 24 & 0 & 0 & \red{32} &DG & 63 & 99 & \red{100}
& 0 & 0 & 90 \\
S & \red{*} & \red{*} & \red{*} & 8 & 7 & 5 &S & \red{*} & \red{*} &
\red{*} & \red{*} & \red{*} & \red{*} \\
F$+$ & 17 & 6 & 1 & 55 & 13 & \red{0}.\red{5} &F$+$ & 3 & 1 & \red{0} & 55 &
10 & \red{0} \\
F$-$ & \red{0} & \red{0} & 2 & \red{0} & \red{0} & 2 &F$-$ & \red{0} &
\red{0} & \red{0} & \multicolumn{1}{l}{\phantom{97}{\ref{Bphi}$\mathrm{d}0$}} & \red{0} & \red{0} \\
Coeff$+$ & 22 & 7 & \red{1} & 199.5 & 17 & \red{1}&Coeff$+$ & 4 & 1 &
\red{0} & 197 & 13 & \red{0}\\
Coeff$-$& 0.5 & 2 & 7 & \red{0} & 2 & 7&Coeff$-$ & \red{0} & \red{0} &
\red{0} & \red{0} & \red{0} & \red{0}\\
SpontMSE &  \red{295} & 428 & 768 & 1445 & 1026 &
1835&SpontMSE & 82 & 166 & 355 & 104 & 43 & 64\\
 \multicolumn{1}{r}{$+$ols}& 1327 & 587 & 859 & 1512 & 1058 & 1935& \multicolumn{1}{r}{$+$ols}& 41 & 26 &
\red{24} & 107 & 74 & 26 \\
InterMSE & \red{38} & 51 & 79 & 214 & 49 & 65&
InterMSE &  10 & 19 & 39 & 16 & 2.9 & 3.17\\
  \multicolumn{1}{r}{$+$ols}& 63 & 45 & 61 & 228 & 84 & 70& \multicolumn{1}{r}{$+$ols}& 3 & 2.1 & \red{1}.\red{9} & 17
& 6.3 & 2\\
 \hline
\end{tabular*}
\end{sidewaystable}

Results for our method and for adaptive Lasso can be found in Table~\ref{tableN2}. This preliminary study also provides answers for tuning issues.
The line ``DG'' gives the number of correct identifications of
dependency groups. For instance, for $M=8$, ``DG'' gives the number of
simulations for which the 3 dependency groups $\{1,2,3\}$, $\{4\}$ and
$\{5,6,7,8\}$ are recovered by the methods. When $M=2$, both methods
correctly find that neurons 1 and 2 are dependent, even if $T=2$. When
8 neurons are considered, the estimates should find 3 dependency
groups. We see that even with $T=2$, our method with $\gamma=1$
correctly guesses the dependency groups for 32\% of the simulations.
It's close or equal to 100\% when $T=20$ with $\gamma=1$ or $\gamma=2$.
The adaptive Lasso has to take $\gamma=1000$ for $T=2$ and $T=20$ to
obtain as convincing results. Clearly, smaller choices of $\gamma$ for
adaptive Lasso leads to bad estimations of the dependency groups. Next,
let us focus on the detection of nonzero spontaneous rates. Whatever
the experiment and the parameter $\gamma$, our method is optimal
whereas adaptive Lasso misses some nonzero spontaneous rates when
$T=2$. Under this criterion, for adaptive Lasso, the choice $\gamma
=1000$ is clearly bad when $T=2$ (the optimal value of $S$ is $S=2$
when $M=2$ and $S=8$ when $M=8$) on both experiments, whereas $\gamma
=2$ or $\gamma=200$ is better. Not surprisingly, the number of
additional nonzero functions and additional nonzero coefficients
decreases when $T$ grows and when $\gamma$ grows, whatever the method
whereas the number of missing functions or coefficients increases. We
can conclude from these facts and from further analysis of Table~\ref{tableN2} that the choice $\gamma=0.5$ for our method and the choice
$\gamma=2$ for the adaptive Lasso are wrong choices of the tuning
parameters. In conclusion of this preliminary study, our method with
$\gamma=1$ or $\gamma=2$ seems a good choice and is robust with respect
to $T$. When $T=20$, the optimal choice for adaptive Lasso is $\gamma
=1000$. When $T=2$, the choice is not so clear and depends on the
criterion we wish to favor.

Now let us look at mean squared errors (MSE). Since the spontaneous
rates do not behave like the other coefficients, we split the MSE in
two parts: one for the spontaneous rates:
\[
\mbox{SpontMSE}=\sum_{m=1}^M\bigl(\hat{
\nu}^{(m)}-\nu^{(m)}\bigr)^2,
\]
and one for interactions:
\[
\mbox{InterMSE}=\sum_{m=1}^M\sum
_{\ell=1}^M \int\bigl(\hat{h}_\ell^{(m)}
(t)-h_\ell^{(m)}(t)\bigr)^2 \,\mathrm{d}t.
\]
We still report the results for \textbf{B}, \textbf{BO}, \textbf{A} and
\textbf{AO} in Table~\ref{tableN2}. Our comments mostly focus on cases where
the results for the previous study are good. First, note that results
on such cases are better by using the second step (OLS). Furthermore,
MSE is increasing with $\gamma$ for \textbf{B} and \textbf{A}, since
underestimation is stronger when $\gamma$ increases. This phenomenon
does not appear for two step procedures, which leads to a more stable
MSE. 
For adaptive Lasso, when $T=2$, the choice $\gamma=200$ leads to good
MSE, but the MSE are smaller for \textbf{BO} with $\gamma=1$. When
$T=20$, the choice $\gamma=1000$ for \textbf{AO} leads to results that
are of the same magnitude as the ones obtained by \textbf{BO} with
$\gamma=1$ or $2$. Still for $T=20$, results for the estimate \textbf{B}
are worse than results for \textbf{A}. It is due to the fact that
shrinkage is larger in our method for the coefficients we want to keep
than shrinkage of adaptive Lasso that becomes negligible as soon as
the true coefficients are large enough. However the second step
overcomes this problem.
%
\begin{figure}

\includegraphics{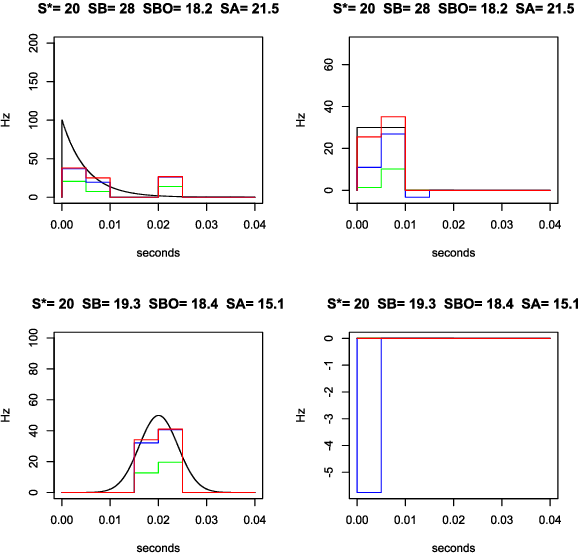}

\caption{Reconstructions corresponding to Experiment~\protect\ref{ex2}
with $T=2$ and $K=8$. Each line $m$ represents the function $h_\ell
^{(m)}$, for $\ell=1,2$. The spontaneous rates estimation associated
with each line $m$ is given above the graphs: S$^*$ denotes the true
spontaneous rate and its estimators computed by using \textbf{B},
\textbf{BO} and \textbf{A} respectively are denoted by SB, SBO and SA.
The true interactions functions (in black) are reconstructed by using
\textbf{B}, \textbf{BO} and \textbf{A} providing reconstructions in
green, red and blue respectively. We use $\gamma=1$ for \textbf{B} and
\textbf{BO} and $\gamma=200$ for \textbf{A}.}\vspace*{3pt}
\label{smoothT2}
\end{figure}
%
\begin{figure}

\includegraphics{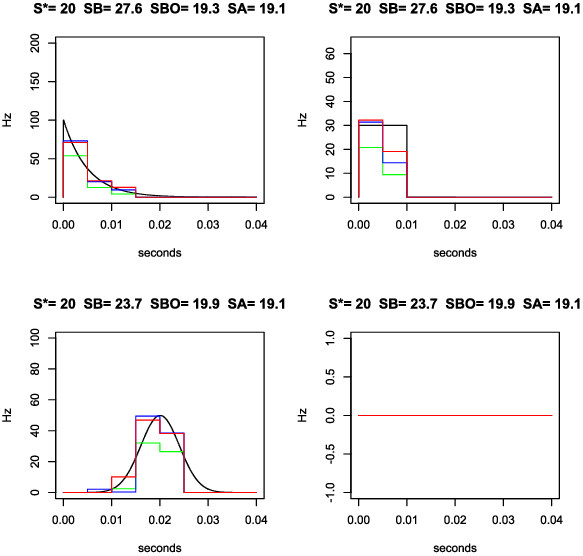}

\caption{Reconstructions corresponding to Experiment~\protect\ref{ex2} with $T=20$ and $K=8$. 
Same convention as in Figure \protect\ref{smoothT2}. We use $\gamma=1$
for \textbf{B} and \textbf{BO} and $\gamma=1000$ for \textbf{A}.}\label
{smoothT20}
\end{figure}

Note also that a more thorough study of the tuning parameter $\gamma$
has been performed by \cite{bertin11:_adapt_dantiz} where it is
mathematically proved that the choice $\gamma<1$ leads to very
degenerate estimates in the density setting. Their method for choosing
Lasso parameters being analogous to ours, it seems coherent to obtain
worse MSE for $\gamma=0.5$ than for $\gamma=1$ or $\gamma=2$, at least
for \textbf{BO}. The boundary $\gamma=1$ in their simulation study
seems to be a robust choice there, and it seems to be the case here too.

We now provide some reconstructions by using Lasso methods. Figures~\ref{smoothT2} and \ref{smoothT20} give the reconstructions corresponding
to\vadjust{\goodbreak} Experiment~\ref{ex2} ($M=2$) with $K=8$ for $T=2$ and $T=20$, respectively.
The reconstructions are quite satisfying. Of course, the quality
improves when $T$ grows. We also note improvements by using \textbf{BO}
instead of \textbf{B}. For adaptive Lasso, improvements by using the
second step are not significative and this is the reason why we do not
represent reconstructions with \textbf{AO}.
Graphs of the right-hand side of Figure~\ref{smoothT2} illustrate the
difficulties of adaptive Lasso to recover the exact support of
interactions functions, namely $h_2^{(1)}$ and $h_2^{(2)}$ for $T=2$.
Figure~\ref{8Neurons} provides another illustration in the case of
Experiment~\ref{ex3} ($M=8$) with $K=8$ for $T=20$. For the sake of clarity, we
only represent reconstructions for the first 4 neurons. From the
estimation point of view, this illustration provides a clear hierarchy
between the methods: \textbf{BO} seems to achieve the best results and
\textbf{B} the worst. Finally, Figure~\ref{Neg} shows that even in the
inhibition case, we are able to recover the negative interactions.
%
%
\begin{sidewaysfigure}

\includegraphics{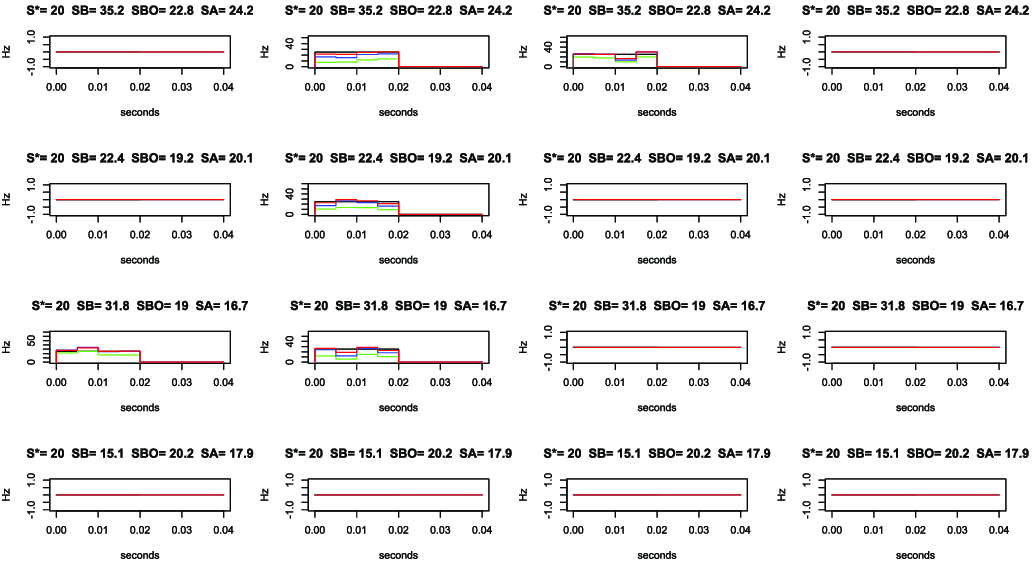}

\caption{Reconstructions corresponding to Experiment~\protect\ref{ex3} with $T=20$ and
$K=8$ and for the first 4 neurons. Each line $m$ represents the
function $h_\ell^{(m)}$, for $\ell=1,2,3,4$. 
Same convention as in Figure \protect\ref{smoothT2}. We use $\gamma=1$
for \textbf{B} and \textbf{BO} and $\gamma=1000$ for \textbf{A}.}\label
{8Neurons}
\end{sidewaysfigure}
%
\begin{figure}

\includegraphics{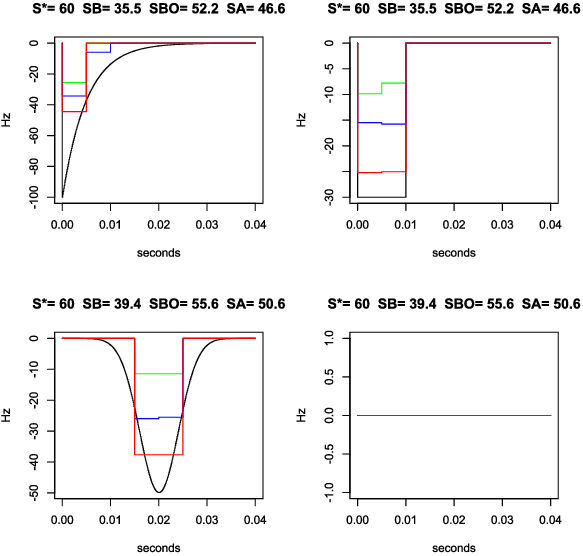}

\caption{Reconstructions corresponding to Experiment~\protect\ref{ex4} with $T=20$ and
$K=8$. Same conventions as in Figure \protect\ref{smoothT2}.
We use $\gamma=1$ for \textbf{B} and \textbf{BO} and $\gamma=1000$ for
\textbf{A}.}
\label{Neg}
\end{figure}

\subsection{Conclusions}
With respect to the problem of tuning our methodology based on
Bernstein type inequalities, our simulation study is coherent with
theoretical aspects since we achieve our best results by taking $\gamma
=1$, which constitutes the limit case of assumptions of Theorem~\ref
{th:oraclewithd}. For practical aspects, we recommend the choice $\gamma
=1$ even if $\gamma=2$ is acceptable. More importantly, this choice is
robust with respect to the duration of recordings, which is not the
case for adaptive Lasso. Implemented with $\gamma=1$, our method
outperforms adaptive Lasso and it is able to recover the dependency
groups, the nonzero spontaneous rates, the nonzero functions and even
the nonzero coefficients as soon as $T$ is large enough. Most of the
time, the two step procedure \textbf{BO} seems to achieve the best
results for parameter estimation.

It is important to note that the question of tuning adaptive Lasso
remains open. Some values of $\gamma$ allow us to obtain very good
results 
but they are not robust with respect to $T$, which may constitute a
serious problem for practitioners. In the standard regression
setting, this problem may be overcome by using cross-validation on
independent data, which somehow estimates random fluctuations. But in
this multivariate Hawkes setup, independence assumptions on data
cannot be made and this explains the problems for tuning adaptive
Lasso. Our method based on Bernstein type concentration inequalities
takes into account those fluctuations. It also takes into account the
nature of the coefficients and the variability of their estimates
which differ for spontaneous rates on the one hand and coefficients
of interaction functions on the other hand. The shape of weights of
adaptive Lasso does not incorporate this difference, which explains
the contradictions for tuning the method when $T=2$. For instance, in
some cases, adaptive Lasso tends to estimate some spontaneous rates to
zero in order to achieve better performance on the interaction functions.

\section{Proofs}
This section is devoted to the proofs of the results of the paper.
Throughout, $C$ is a constant whose value may change from line to line.
\subsection{Proof of Theorem \texorpdfstring{\protect\ref{th:oracle}}{1}}\label{sec:proof-oracle}
The proof of Theorem~\ref{th:oracle} is standard (see for instance \cite
{bunea}), but for the sake of completeness, we give it.
We use $\llVert  \cdot  \rrVert _{\ell_2}$ for the Euclidian norm of $\R^{\Phi}$.
Given $a$ recall that
\[
f_{a}=\sum_{\p\in\Phi} a_\p\p.
\]
Then, we have $\hat{f}=f_{\hat a}$,
\[
a' b=\psi(f_a)\bullet N_T
\]
and
\[
a'G a=\llVert f_a \rrVert _T^2=
\bigl\llVert \psi(f_{a}) \bigr\rrVert _{\proc}^2.
\]
Then,
\begin{eqnarray*}
-2 \psi(f_{\hat a})\bullet N_T +\llVert f_{\hat a}
\rrVert _{T}^2+ 2 d'\llvert \hat a \rrvert
\leq-2 \psi(f_{ a})\bullet N_T +\llVert f_{ a}
\rrVert _{T}^2+ 2 d'\llvert a \rrvert .
\end{eqnarray*}
So,
\begin{eqnarray*}
\bigl\llVert \psi(f_{\hat a})-\lambda \bigr\rrVert _{\proc}^2&=&
\bigl\llVert \psi(f_{\hat
a}) \bigr\rrVert _{\proc}^2+
\llVert \lambda \rrVert _{\proc}^2-2\bigl\langle
\psi(f_{\hat a}),\lambda \bigr\rangle _{\proc}
\\
&\leq& \bigl\llVert \psi(f_{a}) \bigr\rrVert _{\proc}^2+
\llVert \lambda \rrVert _{\proc}^2+2\psi(f_{\hat
a}-f_a)
\bullet N_T
\\
&&{} + 2 d' \bigl(\llvert a \rrvert -\llvert \hat a \rrvert \bigr)-2
\bigl\langle \psi (f_{\hat a}),\lambda\bigr\rangle _{\proc}
\\
&=&\bigl\llVert \psi(f_{a})-\lambda \bigr\rrVert
_{\proc}^2 + 2\bigl\langle \psi(f_{a}-f_{\hat
a}),
\lambda\bigr\rangle _{\proc}
\\
&&{} + 2\psi(f_{\hat a}-f_a)\bullet N_T +2
d' \bigl(\llvert a \rrvert -\llvert \hat a \rrvert \bigr)
\\
&=&\bigl\llVert \psi(f_{a})-\lambda \bigr\rrVert
_{\proc}^2 + 2\psi(f_a-f_{\hat a})\bullet
(\Lambda-N)_T+2 d' \bigl(\llvert a \rrvert -\llvert
\hat a \rrvert \bigr)
\\
&=&\bigl\llVert \psi(f_{a})-\lambda \bigr\rrVert
_{\proc}^2+2\sum_{\p\in\Phi}
(a_\p-\hat a_\p)\psi(\p)\bullet(\Lambda-N)_T+2
d' \bigl(\llvert a \rrvert -\llvert \hat a \rrvert \bigr)
\\
&\leq&\bigl\llVert \psi(f_{a})-\lambda \bigr\rrVert
_{\proc}^2+2\sum_{\p\in\Phi} \llvert
a_\p -\hat a_\p \rrvert \times\llvert \bar
b_\p-b_\p \rrvert +2 d' \bigl(\llvert a
\rrvert -\llvert \hat a \rrvert \bigr).
\end{eqnarray*}
Using \eqref{Coeffcontrol}, we obtain:
\begin{eqnarray*}
\bigl\llVert \psi(f_{\hat a})-\lambda \bigr\rrVert _{\proc}^2&
\leq&\bigl\llVert \psi (f_{a})-\lambda \bigr\rrVert
_{\proc}^2+2\sum_{\p\in\Phi}
d_\p\llvert a_\p-\hat a_\p \rrvert +2\sum
_{\p\in\Phi} d_\p \bigl(\llvert
a_\p \rrvert -\llvert \hat a_\p \rrvert \bigr)
\\
&\leq&\bigl\llVert \psi(f_{a})-\lambda \bigr\rrVert
_{\proc}^2+2\sum_{\p\in\Phi}
d_\p \bigl(\llvert a_\p-\hat a_\p \rrvert
+\llvert a_\p \rrvert -\llvert \hat a_\p \rrvert
\bigr).
\end{eqnarray*}
Now, if $\p\notin S(a)$, $\llvert  a_\p-\hat a_\p \rrvert +\llvert  a_\p \rrvert -\llvert  \hat a_\p \rrvert =0$, and
\begin{eqnarray*}
\bigl\llVert \psi(f_{\hat a})-\lambda \bigr\rrVert _{\proc}^2
&\leq&\bigl\llVert \psi(f_{a})-\lambda \bigr\rrVert
_{\proc}^2+2\sum_{\p\in S(a)}d_\p
\bigl(\llvert a_\p-\hat a_\p \rrvert +\llvert
a_\p \rrvert -\llvert \hat a_\p \rrvert \bigr)
\\
&\leq&\bigl\llVert \psi(f_{a})-\lambda \bigr\rrVert
_{\proc}^2+4\sum_{\p\in S(a)}d_\p
\bigl(\llvert a_\p-\hat a_\p \rrvert \bigr)
\\
&\leq&\bigl\llVert \psi(f_{a})-\lambda \bigr\rrVert
_{\proc}^2+4\llVert \hat a-a \rrVert _{\ell
_2} \biggl(
\sum_{\p\in S(a)}d_\p^2
\biggr)^{1/2}.
\end{eqnarray*}
We now use the assumption on the Gram matrix given by \eqref{normlower}
and the triangular inequality for $\llVert \cdot  \rrVert _{T}$, which yields
\begin{eqnarray*}
\llVert \hat a-a \rrVert _{\ell_2}^2&\leq& c^{-1} (
\hat a-a )'G ( \hat a-a )
\\
&=&c^{-1}\llVert f_{\hat a}-f_{a} \rrVert
_{T}^2
\\
&\leq&2c^{-1} \bigl(\bigl\llVert \psi(f_{\hat a})-\lambda \bigr
\rrVert _{\proc}^2+\bigl\llVert \psi (f_{a})-
\lambda \bigr\rrVert _{\proc}^2 \bigr).
\end{eqnarray*}
Let us take $\alpha\in(0,1)$. Since for any $x\in\R$ and any $y\in\R$,
$2xy\leq\alpha x^2+\alpha^{-1}y^2$,
we obtain:
\begin{eqnarray*}
\bigl\llVert \psi(f_{\hat a})-\lambda \bigr\rrVert _{\proc}^2&
\leq &\bigl\llVert \psi(f_{a})-\lambda \bigr\rrVert
_{\proc}^2
\\
&&{} +4\sqrt{2}c^{-1/2}\sqrt {\bigl\llVert \psi(f_{\hat a})-
\lambda \bigr\rrVert _{\proc}^2+\bigl\llVert
\psi(f_{a})-\lambda \bigr\rrVert _{\proc}^2}
\biggl(\sum_{\p\in S(a)}d_\p^2
\biggr)^{1/2}
\\
&\leq&\bigl\llVert \psi(f_{a})-\lambda \bigr\rrVert
_{\proc}^2
\\
&&{} +\alpha \bigl(\bigl\llVert \psi(f_{\hat a})-\lambda \bigr\rrVert
_{\proc}^2+\bigl\llVert \psi(f_{a})-\lambda \bigr
\rrVert _{\proc
}^2 \bigr)+8\alpha^{-1}c^{-1}
\sum_{\p\in S(a)}d_\p^2
\\
&\leq&(1-\alpha)^{-1} \biggl((1+\alpha)\bigl\llVert
\psi(f_{a})-\lambda \bigr\rrVert _{\proc
}^2+8
\alpha^{-1}c^{-1}\sum_{\p\in S(a)}d_\p^2
\biggr).
\end{eqnarray*}
The theorem is proved just by taking an arbitrary absolute value for
$\alpha\in(0,1)$.
\subsection{Proof of Theorem \texorpdfstring{\protect\ref{th:oraclewithd}}{2}}

%

Let us first define
%
\begin{equation}
\label{monT} \mathcal{T}=\Bigl\{t\geq0 \Bigl/ \sup_m\bigl
\llvert \psi^{(m)}_t(\p) \bigr\rrvert > B_\p
\Bigr\}.
\end{equation}
Let us define the stopping time $\tau'=\inf\mathcal{T}$ and the
predictable process $H$ by
\[
H_t^{(m)}=\psi^{(m)}_t(\p)
\indic_{t\leq\tau'}.
\]
Let us apply Theorem~\ref{th:weakBer} to this choice of $H$ with $\tau
=T$ and $B=B_\p$. The choice of $v$ and $w$ will be given later on. To
apply this result, we need to check that
for all $t$ and all $\xi\in(0,3)$, $\sum_m \int_0^t \mathrm{e}^{\xi\sfrac
{H_s^{(m)}}{B_\p}} \lambda^{(m)}_s \mathrm{d}s$ is a.s. finite.
But if $t>\tau'$, then
\[
\int_0^t \mathrm{e}^{\xi\sfrac{H_s^{(m)}}{B_\p}}
\lambda^{(m)}_s \,\mathrm{d}s = \int_0^{\tau'}
\mathrm{e}^{\xi\sfrac{H_s^{(m)}}{B_\p}} \lambda^{(m)}_s \,\mathrm {d}s + \int
_{\tau'}^t \lambda^{(m)}_s
\,\mathrm{d}s,
\]
where the second part is obviously finite (it is just $\Lambda
^{(m)}_t-\Lambda^{(m)}_{\tau'}$).
Hence, it remains to prove that for all $t\leq\tau'$,
\[
\int_0^t \mathrm{e}^{\xi\sfrac{H_s^{(m)}}{B_\p}}
\lambda^{(m)}_s \,\mathrm{d}s
\]
is finite.
But for all $s<t$, $s<\tau'$ and consequently $s\notin\mathcal{T}$.
Therefore, $\llvert  H_s^{(m)} \rrvert \leq B_\p$. Since we are integrating with
respect to the Lebesgue measure, the fact that it eventually does not
hold in $t$ is not a problem and
\[
\int_0^t \mathrm{e}^{\xi\sfrac{H_s^{(m)}}{B_\p}}
\lambda^{(m)}_s \,\mathrm{d}s\leq \mathrm{e}^{\xi}
\Lambda^{(m)}_t,
\]
which is obviously finite a.s.
The same reasoning can be applied to show that a.s. $\exp(\xi
H^2/B^2)\bullet\Lambda_t<\infty$. We can also apply Theorem~\ref
{th:weakBer} to $-H$ in the same way. We obtain at the end that for all
$\e>0$
%
\begin{eqnarray}
\label{onephi1} &&\hspace*{-30pt}\P \biggl(\bigl\llvert H\bullet(N-\Lambda)_T \bigr
\rrvert \geq\sqrt{2 (1+\e)\hat{V}^\mu x} + \frac{B_\p x}{3} \mbox{
and } w\leq\hat{V}^\mu\leq v \mbox{ and } \sup_{m,t\leq T}
\bigl\llvert H^{(m)}_t \bigr\rrvert \leq B_\p
\biggr)\nonumber
\\[-8pt]\\[-8pt]
&&\hspace*{-30pt}\quad \leq4 \biggl(\frac{\log
(v/w)}{\log(1+\e)}+1 \biggr) \mathrm{e}^{-x}.\nonumber
\end{eqnarray}
But on $\Omega_{V,B}$ it is clear that $\forall t\in[0,T], t\notin
\mathcal{T}$. Therefore, $\tau'\geq T$. Therefore for all $t\leq T$,
one also has $t\leq\tau'$ and $H_t^{(m)}=\psi_t^{(m)}(\p)$.
Consequently, on $\Omega_{V,B}$,
\[
H\bullet(N-\Lambda)_T=b_\p-\bar{b}_\p
\quad \mbox{and}\quad  \hat{V}^\mu= \hat {V}^\mu_\p.
\]
Moreover, on $\Omega_{V,B}$, one has that
\[
\frac{B_\p^2 x}{\mu-\phi(\mu)}\leq\hat{V}^\mu_\p\leq
\frac{\mu}{\mu
-\phi(\mu)} V_\p+\frac{B_\p^2 x}{\mu-\phi(\mu)}.
\]
So, we take $w$ and $v$ as respectively the left- and right-hand side
of the previous inequality. Finally note that on $\Omega_{V,B}$,
\[
\sup_{m,t\leq T} \bigl\llvert H^{(m)}_t \bigr
\rrvert = \sup_{m,t\leq T} \bigl\llvert \psi^{(m)}_t(
\p) \bigr\rrvert \leq B_\p.
\]
Hence, we can rewrite \eqref{onephi1} as follows
%
\begin{eqnarray}
\label{onephi2}  \hspace*{-15pt}\P \biggl(\llvert b_\p-\bar{b}_\p
\rrvert \geq\sqrt{2 (1+\e)\hat{V}_\p^\mu x} +
\frac
{B_\p x}{3} \mbox{ and } \Omega_{V,B} \biggr)  \leq4 \biggl(
\frac{\log
(1+\sfrac{\mu V_\p}{B_\p^2x} )}{\log(1+\e)}+1 \biggr) \mathrm{e}^{-x}.
\end{eqnarray}
Apply this to all $\p\in\Phi$, we obtain that
\[
\P \bigl(\exists \p\in\Phi\mbox{ s.t. }\llvert b_\p-
\bar{b}_\p \rrvert \geq d_\p\mbox{ and }
\Omega_{V,B} \bigr) \leq4\sum_{\p\in\Phi} \biggl(
\frac{\log
(1+\sfrac{\mu V_\p}{B_\p^2x} )}{\log(1+\e)}+1 \biggr) \mathrm{e}^{-x}.
\]
Now on the event $\Omega_c \cap\Omega_{V,B}\cap\{\forall\p\in\Phi,
\llvert  b_\p-\bar{b}_\p \rrvert \leq d_\p\}$, one can apply Theorem~\ref{th:oracle}.
To obtain Theorem~\ref{th:oraclewithd}, it remains to bound the
probability of the complementary event by
\[
\P\bigl(\Omega_c^c\bigr)+\P\bigl(\Omega_{V,B}^c
\bigr)+\P \bigl(\exists \p\in\Phi\mbox{ s.t. } \llvert b_\p-
\bar{b}_\p \rrvert \geq d_\p\mbox{ and }
\Omega_{V,B} \bigr).
\]

\subsection{Proof of Theorem \texorpdfstring{\protect\ref{th:weakBer}}{3}}\label{sec:weakBer}
First, replacing $H$ with $H/B$, we can always assume that $B=1$.
Next, let us fix for the moment $\xi\in(0,3)$.
If one assumes that almost surely for all $t>0$, $\sum_{m=1}^M\int_0^t
\mathrm{e}^{\xi H^{(m)}_s} \lambda^{(m)}_s \mathrm{d}s <\infty$ (i.e., that the process
$\mathrm{e}^{\xi H}\bullet\Lambda$ is well defined) then one can apply Theorem~2 of \cite{Bre}, page 165, stating that the process $(E_t)_{t\geq0}$
defined for all $t$ by
\[
E_t=\exp\bigl(\xi H\bullet(N-\Lambda)_t - \phi(\xi H)
\bullet\Lambda_t\bigr)
\]
is a supermartingale. It is also the case for $E_{t\wedge\tau}$ if
$\tau$ is a bounded stopping time. Hence for any $\xi\in(0,3)$ and for
any $x>0$, one has that
\[
\P\bigl(E_{t\wedge\tau}>\mathrm{e}^x\bigr)\leq \mathrm{e}^{-x}
\E(E_{t\wedge\tau})\leq \mathrm{e}^{-x},
\]
which means that
\[
\P\bigl(\xi H\bullet(N-\Lambda)_{t\wedge\tau} - \phi(\xi H)\bullet\Lambda
_{t\wedge\tau} >x\bigr)\leq \mathrm{e}^{-x}.
\]
Therefore,
\[
\P\Bigl(\xi H\bullet(N-\Lambda)_{t\wedge\tau} - \phi(\xi H)\bullet\Lambda
_{t\wedge\tau}>x \mbox{ and } \sup_{s\leq\tau,m}\bigl\llvert
H^{(m)}_s \bigr\rrvert \leq1\Bigr)\leq \mathrm{e}^{-x}.
\]
But if $ \sup_{s\leq\tau,m}\llvert  H^{(m)}_s \rrvert \leq1$, then for any $\xi>0$ and
any $s$,
\[
\phi\bigl(\xi H^{(m)}_s\bigr) \leq\bigl(H^{(m)}_s
\bigr)^2 \phi(\xi).
\]
So, for every $\xi\in(0,3)$, we obtain:
%
\begin{equation}
\label{start} \P \Bigl(M_\tau\geq\xi^{-1}\phi(\xi)
H^2\bullet\Lambda_\tau+ \xi^{-1} x\mbox{ and }
\sup_{s\leq\tau,m}\bigl\llvert H^{(m)}_s \bigr
\rrvert \leq1 \Bigr) \leq \mathrm{e}^{-x}.
\end{equation}
%
Now let us focus on the event $H^2\bullet\Lambda_\tau\leq v$ where $v$
is a deterministic quantity. We have that consequently
\[
\P \Bigl(M_\tau\geq\xi^{-1}\phi(\xi)v + \xi^{-1}
x\mbox{ and } H^2\bullet\Lambda_\tau\leq v \mbox{ and }
\sup_{s\leq\tau,m}\bigl\llvert H^{(m)} _s \bigr
\rrvert \leq1 \Bigr) \leq \mathrm{e}^{-x}.
\]
It remains to choose $\xi$ such that $\xi^{-1}\phi(\xi)v + \xi^{-1} x$
is minimal. But this expression has no simple form. However, since
$0<\xi<3$, one can bound $\phi(\xi)$ by $\xi^2(1-\xi/3)^{-1}/2$. Hence,
we can start with
%
\begin{equation}
\label{start2} \P \biggl(M_\tau\geq\frac{\xi}{2(1-\xi/3)}H^2
\bullet\Lambda_\tau+ \xi ^{-1} x \mbox{ and }\sup
_{s\leq\tau,m}\bigl\llvert H^{(m)}_s \bigr
\rrvert \leq1 \biggr) \leq \mathrm{e}^{-x}
\end{equation}
and also
%
\begin{equation}
\label{startdet} \P \biggl(M_\tau\geq\frac{\xi}{2(1-\xi/3)}v +
\xi^{-1} x\mbox{ and } H^2\bullet\Lambda_\tau\leq
v \mbox{ and } \sup_{s\leq\tau,m}\bigl\llvert H^{(m)}
_s \bigr\rrvert \leq1 \biggr) \leq \mathrm{e}^{-x}.
\end{equation}
It remains now to minimize $\xi\longmapsto\frac{\xi}{2(1-\xi/3)}v + \xi
^{-1} x$.

\begin{Lemma}\label{Lemme:Bernstein}
Let $a$, $b$ and $x$ be positive constants and let us consider on $(0, 1/b)$,
\[
g(\xi)=\frac{a\xi}{(1-b\xi)}+\frac{x}{\xi}.
\]
Then $\min_{\xi\in(0,1/b)} g(\xi)= 2 \sqrt{ax} + bx$ and the minimum
is achieved in
$\xi(a,b,x)=\frac{xb-\sqrt{ax}}{xb^2-a}$.
\end{Lemma}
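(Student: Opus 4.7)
The plan is to use elementary calculus. I would first note that $g$ is continuous on $(0,1/b)$ and diverges to $+\infty$ at both endpoints (since $x/\xi \to \infty$ as $\xi \to 0^+$ and $a\xi/(1-b\xi) \to \infty$ as $\xi \to (1/b)^-$), so a global minimum exists in the interior and must be a critical point.

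Next I would compute the derivative. A direct calculation gives
$$g'(\xi) = \frac{a}{(1-b\xi)^2} - \frac{x}{\xi^2},$$
and setting $g'(\xi)=0$ yields $a\xi^2 = x(1-b\xi)^2$. Taking the positive square root on $(0,1/b)$ (both sides being positive there) produces the linear equation $\sqrt{a}\,\xi = \sqrt{x}\,(1-b\xi)$, whose unique solution is
$$\xi^* = \frac{\sqrt{x}}{\sqrt{a}+b\sqrt{x}}.$$
After multiplying numerator and denominator by $\sqrt{x}b - \sqrt{a}$ (or, equivalently, rewriting $xb^2 - a = (\sqrt{x}b-\sqrt{a})(\sqrt{x}b+\sqrt{a})$), this is exactly the stated expression $\xi(a,b,x) = \frac{xb-\sqrt{ax}}{xb^2-a}$. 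Because the boundary values of $g$ are infinite and $\xi^*$ is the only critical point, it is the global minimizer.

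Finally I would evaluate $g$ at $\xi^*$. The relation $\sqrt{a}\,\xi^* = \sqrt{x}(1-b\xi^*)$ gives $1-b\xi^* = \sqrt{a}/(\sqrt{a}+b\sqrt{x})$, hence
$$\frac{a\xi^*}{1-b\xi^*} = \sqrt{a}\cdot\sqrt{a}\xi^*\cdot\frac{1}{\sqrt{a}} \cdot \frac{\sqrt{x}}{\sqrt{a}\xi^*/(1-b\xi^*)}\cdot\ldots$$
rather than push such algebra, I would simply substitute: $\frac{a\xi^*}{1-b\xi^*} = \sqrt{ax}$ (from the critical point equation multiplied by $\sqrt{a}/(1-b\xi^*)$), and $\frac{x}{\xi^*} = \sqrt{x}(\sqrt{a}+b\sqrt{x}) = \sqrt{ax} + bx$. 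Summing the two terms gives $g(\xi^*) = 2\sqrt{ax}+bx$, which is the claimed value. No step here is a real obstacle; the only mild subtlety is the sign choice when taking the square root, which is justified by the constraint $\xi \in (0,1/b)$.
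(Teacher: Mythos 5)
Your proof is correct and follows essentially the same elementary-calculus route as the paper: check that $g\to+\infty$ at both endpoints, compute $g'$, solve $g'=0$, and evaluate $g$ at the critical point. The only cosmetic difference is that you solve the critical-point equation via the square-root reduction $\sqrt a\,\xi=\sqrt x\,(1-b\xi)$, obtaining the cleaner form $\xi^*=\sqrt x/(\sqrt a+b\sqrt x)$ and then rationalizing to match the paper's expression $\frac{xb-\sqrt{ax}}{xb^2-a}$ (and the half-started display you abandon before ``rather than push such algebra'' should simply be deleted), whereas the paper just asserts that $\xi(a,b,x)$ is the relevant root of the quadratic.
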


\begin{pf}
The limits of $g$ in $0^+$ and $(1/b)^-$ are $+\infty$. The derivative
is given by
\[
g'(\xi)=\frac{a}{(1-b\xi)^2}-\frac{x}{\xi^2}
\]
which is null in $\xi(a,b,x)$ (remark that the other solution of the
polynomial does not lie in $(0,1/b)$). Finally, it remains to evaluate
the quantity in $\xi(a,b,x)$ to obtain the result.
\end{pf}

Now, we apply \eqref{startdet} with $\xi(v/2,1/3,x)$ and we obtain this
well known formula which can be found in \cite{SW} for instance,
%
\begin{equation}
\label{sho} \P \Bigl(M_\tau\geq\sqrt{2vx} + x/3\mbox{ and }
H^2\bullet\Lambda_\tau \leq v \mbox{ and } \sup
_{s\leq\tau,m}\bigl\llvert H^{(m)}_s \bigr
\rrvert \leq1 \Bigr) \leq \mathrm{e}^{-x}.
\end{equation}
Now we would like first to replace $v$ by its random version $H^2\bullet
\Lambda_\tau$. Let $w,v$ be some positive constants and let us
concentrate on the event
%
\begin{equation}
\label{hypW} w\leq H^2\bullet\Lambda_\tau\leq v.
\end{equation}
For all $\e>0$ we introduce $K$ a positive integer depending on $\e$,
$v$ and $w$ such that $(1+\e)^K w\geq v$. Note that $K=\lceil\log
(v/w)/\log(1+\e)\rceil$ is a possible choice.
Let us denote $v_0=w$, $v_1=(1+\e)w, \ldots, v_K=(1+\e)^K w$.
For any $0<\xi<3$ and any $k$ in $\{0,\ldots,K-1\}$, one has, by applying
\eqref{start2},
\begin{eqnarray*}
&&\P \biggl(M_\tau\geq\frac{\xi}{2(1-\xi/3)}H^2\bullet
\Lambda_\tau+ \xi ^{-1} x \\
&&\hphantom{\P \biggl(}\mbox{and } v_k \leq
H^2\bullet\Lambda_\tau\leq v_{k+1} \mbox{ and }
\sup_{s\leq
\tau,m}\bigl\llvert H^{(m)}_s \bigr
\rrvert \leq1 \biggr) \leq \mathrm{e}^{-x}.
\end{eqnarray*}
This implies that
\[
\P \biggl(M_\tau\geq\frac{\xi}{2(1-\xi/3)} v_{k+1} +
\xi^{-1} x \mbox{ and } v_k \leq H^2\bullet
\Lambda_\tau\leq v_{k+1}\mbox{ and } \sup
_{s\leq
\tau,m}\bigl\llvert H^{(m)}_s \bigr
\rrvert \leq1 \biggr) \leq \mathrm{e}^{-x}.
\]
Using Lemma~\ref{Lemme:Bernstein}, with $\xi=\xi(v_{k+1}/2,1/3,x)$,
this gives
\[
\P \Bigl(M_\tau\geq\sqrt{2 v_{k+1} x} + x/3\mbox{ and }
v_k \leq H^2\bullet\Lambda_\tau\leq
v_{k+1}\mbox{ and } \sup_{s\leq
\tau,m}\bigl\llvert
H^{(m)}_s \bigr\rrvert \leq1 \Bigr) \leq \mathrm{e}^{-x}.
\]
But if $v_k \leq H^2\bullet\Lambda_\tau$, $v_{k+1}\leq(1+\e) v_k\leq
(1+\e) H^2\bullet\Lambda_\tau$, so
\begin{eqnarray*}
&&\P \Bigl(M_\tau\geq\sqrt{2 (1+\e) \bigl(H^2\bullet
\Lambda_\tau \bigr) x} + x/3\\
&&\hphantom{\P \biggl(}\mbox{and } v_k \leq
H^2\bullet\Lambda_\tau \leq v_{k+1}\mbox{ and }
\sup_{s\leq
\tau,m}\bigl\llvert H^{(m)}_s \bigr
\rrvert \leq1 \Bigr) \leq \mathrm{e}^{-x}.
\end{eqnarray*}
Finally summing on $k$, this gives
%
\begin{eqnarray}
\label{bracket1} &&\P \Bigl(M_\tau\geq\sqrt{2 (1+\e)
\bigl(H^2\bullet\Lambda_\tau\bigr) x} + x/3 \nonumber\\[-8pt]\\[-8pt]
&&\hphantom{\P \Bigl(}\mbox{and } w
\leq H^2\bullet\Lambda_\tau\leq v
\mbox { and } \sup_{s\leq\tau,m}\bigl\llvert
H^{(m)}_s \bigr\rrvert \leq1 \Bigr) \leq K
\mathrm{e}^{-x}.\nonumber
\end{eqnarray}
This leads to the following result that has interest per se.

\begin{Prop}
\label{weakberLambda}
Let $N=(N^{(m)})_{m=1,\ldots,M}$ be a multivariate counting process with
predictable intensities $\lambda^{(m)}_t$ and corresponding compensator
$\Lambda^{(m)}_t$ with respect to some given filtration.
Let $B>0$.
Let $H=(H^{(m)})_{m=1,\ldots,M}$ be a multivariate predictable process such
that for all $\xi\in(0,3)$, $\mathrm{e}^{\xi H/B}\bullet\Lambda_t<\infty$ a.s.
for all $t$. Let us consider the martingale defined for all $t$ by
\[
M_t=H\bullet(N-\Lambda)_t.
\]
Let $v>w$ be positive constants and let $\tau$ be a bounded stopping time.
Then for any $\e,x>0$
%
\begin{eqnarray}
\label{bracket}&&\hspace*{-30pt} \P \biggl(M_\tau\geq\sqrt{2 (1+\e)
\bigl(H^2\bullet\Lambda_\tau\bigr) x} + \frac{Bx}{3}
\mbox{ and } w\leq H^2\bullet\Lambda_\tau\leq v
 \mbox{ and } \sup_{m,t\leq\tau} \bigl\llvert
H^{(m)}_t \bigr\rrvert \leq B \biggr) \nonumber\\[-8pt]\\[-8pt]
&&\hspace*{-30pt}\quad \leq \biggl(
\frac{\log(v/w)}{\log(1+\e)}+1 \biggr) \mathrm{e}^{-x}.\nonumber
\end{eqnarray}
\end{Prop}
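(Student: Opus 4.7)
My plan is to prove Proposition~\ref{weakberLambda} by first reducing to the case $B=1$ via the rescaling $H \mapsto H/B$ (which preserves the martingale structure, the quadratic characteristic up to an obvious factor $B^2$, and turns the sup bound into $\sup|H^{(m)}_t| \le 1$). Once normalized, the entire argument rests on the exponential supermartingale
\[
E_t = \exp\!\bigl(\xi\, H\bullet(N-\Lambda)_t - \phi(\xi H)\bullet \Lambda_t\bigr), \qquad \xi\in(0,3),
\]
which is a supermartingale by Brémaud's theorem thanks to the integrability assumption \eqref{expcond}. Applying this to the stopped process and using $\E(E_{t\wedge\tau})\le 1$ with Markov's inequality yields the raw Chernoff bound $\P\bigl(\xi M_{t\wedge\tau} - \phi(\xi H)\bullet\Lambda_{t\wedge\tau} > x\bigr) \le e^{-x}$.

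Next I would use the two elementary inequalities: $\phi(\xi h)\le h^2\phi(\xi)$ whenever $|h|\le 1$ (easy from the series expansion of $\phi$), and $\phi(\xi) \le \xi^2/(2(1-\xi/3))$ for $\xi\in(0,3)$ (the classical Bernstein bound). On the event $\{\sup_{s\le\tau,m}|H^{(m)}_s|\le 1\}$ these combine to give, for every $\xi \in (0,3)$,
\[
\P\!\left(M_\tau \ge \frac{\xi}{2(1-\xi/3)}\, H^2\bullet\Lambda_\tau + \frac{x}{\xi},\ \sup_{s,m}|H^{(m)}_s|\le 1\right) \le e^{-x}.
\]
Intersecting further with a deterministic upper bound $H^2\bullet\Lambda_\tau \le v$ and optimizing the resulting linear expression $\xi v/(2(1-\xi/3)) + x/\xi$ in $\xi$ via the lemma already established in the proof of Theorem~\ref{th:weakBer} (which gives minimum value $2\sqrt{ax}+bx$ with $a=v/2$, $b=1/3$), one recovers the classical deterministic bound $\sqrt{2vx} + x/3$ on the tail.

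The real work, and the main obstacle, is the peeling step that replaces the deterministic $v$ with the random quantity $H^2\bullet\Lambda_\tau$. For this I would slice the interval $[w,v]$ dyadically along a geometric sequence $v_k = (1+\varepsilon)^k w$ with $K = \lceil \log(v/w)/\log(1+\varepsilon)\rceil$ steps, so that $v_K \ge v$. On each slab $\{v_k \le H^2\bullet\Lambda_\tau \le v_{k+1}\}$ apply the deterministic-bound inequality above with $v = v_{k+1}$ to obtain the bound $\sqrt{2 v_{k+1}x}+x/3$. Because $v_{k+1} \le (1+\varepsilon) v_k \le (1+\varepsilon)\, H^2\bullet\Lambda_\tau$ on that slab, the bound becomes the random one $\sqrt{2(1+\varepsilon)(H^2\bullet\Lambda_\tau)x}+x/3$, uniformly in $k$. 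A union bound over the $K$ slabs then produces the prefactor $\log(v/w)/\log(1+\varepsilon)+1$ in the conclusion.

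Finally, I would undo the normalization by substituting $H/B$ back for $H$: this restores the $B^2$ factor inside the quadratic characteristic and contributes the $Bx/3$ linear term, delivering the inequality \eqref{bracket} exactly as stated. The delicate point throughout is ensuring that the integrability hypothesis \eqref{expcond} is genuinely used to guarantee that $E_t$ is well-defined and a bona fide supermartingale (rather than merely a local supermartingale), and that the peeling step is consistent with the two-sided constraint $w \le H^2\bullet\Lambda_\tau \le v$, which is precisely the reason the strict positivity $w>0$ appears in the statement.
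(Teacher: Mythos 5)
Your proposal is correct and follows essentially the same route as the paper's own proof: reduce to $B=1$, use Br\'emaud's exponential supermartingale to get the Chernoff bound, dominate $\phi(\xi H)$ by $H^2\phi(\xi)$ on $\{|H|\le 1\}$, optimize in $\xi$ via the quadratic-plus-hyperbola lemma to reach $\sqrt{2vx}+x/3$, and peel along the geometric grid $v_k=(1+\e)^kw$ with a final union bound. One cosmetic slip: the hypothesis of this proposition only requires $e^{\xi H/B}\bullet\Lambda_t<\infty$ (not the full pair in \eqref{expcond}, whose second part involving $e^{\xi H^2/B^2}$ is only needed later when $H^2\bullet\Lambda_\tau$ is replaced by $H^2\bullet N_\tau$), but your argument in fact uses only the first condition, so this is a labeling imprecision rather than a gap.
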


Next, we would like to replace $H^2\bullet\Lambda_\tau$, the quadratic
characteristic of $M$, with its estimator $H^2\bullet N_\tau$, that is,
the quadratic variation of $M$. For this purpose, let us consider
$W_t=-H^2\bullet(N-\Lambda)_t$ which is still a martingale since the
$-(H^{(m)}_s)^2$'s are still predictable processes. We apply \eqref{start}
with $\mu$ instead of $\xi$, noticing that on the event $\{\sup_{s\leq
\tau,m}\llvert  H^{(m)}_s \rrvert \leq1\}$, one has that $H^4\bullet\Lambda_\tau\leq
H^2\bullet\Lambda_\tau$. This gives that
\[
\P \Bigl(H^2\bullet\Lambda_\tau\geq H^2
\bullet N_\tau+ \bigl\{\phi(\mu)/\mu \bigr\} H^2\bullet
\Lambda_\tau+x/\mu\mbox{ and } \sup_{s\leq\tau,m}\bigl
\llvert H^{(m)} _s \bigr\rrvert \leq1 \Bigr)\leq
\mathrm{e}^{-x},
\]
which means that
%
\begin{equation}
\label{andV} \P \Bigl(H^2\bullet\Lambda_\tau\geq
\hat{V}^\mu\mbox{ and } \sup_{s\leq\tau,m}\bigl\llvert
H^{(m)}_s \bigr\rrvert \leq1 \Bigr)\leq \mathrm{e}^{-x}.\vadjust{\goodbreak}
\end{equation}
So we use again \eqref{start2} combined with \eqref{andV} to obtain
that for all $\xi\in(0,3)$
\begin{eqnarray*}
&&\P \biggl(M_\tau\geq\frac{\xi}{2(1-\xi/3)}\hat{V}^\mu+
\xi^{-1} x\mbox { and } \sup_{s\leq\tau,m}\bigl\llvert
H^{(m)}_s \bigr\rrvert \leq1 \biggr)
\\
 &&\quad \leq\P \biggl(M_\tau\geq\frac{\xi}{2(1-\xi/3)}\hat{V}^\mu+
\xi^{-1} x\mbox { and } \sup_{s\leq\tau,m}\bigl\llvert
H^{(m)}_s \bigr\rrvert \leq1 \mbox{ and } H^2
\bullet\Lambda _\tau\leq\hat{V}^\mu \biggr)
\\
&&\qquad {}+ \P \Bigl(H^2\bullet\Lambda_\tau\geq\hat{V}^\mu
\mbox{ and } \sup_{s\leq\tau,m}\bigl\llvert H^{(m)}_s
\bigr\rrvert \leq1 \Bigr) \leq 2\mathrm{e}^{-x}.
\end{eqnarray*}
This new inequality replaces \eqref{start2} and it remains to replace
$H^2\bullet\Lambda_\tau$ by $\hat{V}^\mu$ in the peeling arguments to
obtain as before that
%
\begin{equation}
\label{bracket2} \P \Bigl(M_\tau\geq\sqrt{2 (1+\e)
\hat{V}^\mu x} + x/3 \mbox{ and } w\leq\hat{V}^\mu\leq v
\mbox{ and } \sup_{s\leq\tau,m}\bigl\llvert H^{(m)}_s
\bigr\rrvert \leq1 \Bigr) \leq2K \mathrm{e}^{-x}.
\end{equation}

\subsection{Proofs of the probabilistic results for Hawkes processes}
\subsubsection{Proof of Lemma \texorpdfstring{\protect\ref{lem:expmoment}}{1}}
Let $K(n)$ denote the vector of the number of descendants in the
$n$th generation from a single ancestral point of type $\ell$,
define $K(0) = \mathbf{e}_{\ell}$ and
let $W(n) = \sum_{k=0}^n K(k)$ denote the total number of
points in the first $n$ generations. Define for $\theta\in\R^M$
\[
\phi_{\ell}(\theta) = \log\E_{\ell} \mathrm{e}^{\theta^T K(1)}.
\]
Thus, $\phi_{\ell}(\theta)$ is the log-Laplace transform of the
distribution of $K(1)$ given that there is a single initial ancestral
point of type $\ell$. We define the vector $\phi(\theta)$ by $\phi
(\theta)'=(\phi_1(\theta),\ldots,\phi_M(\theta))$. Note that $\phi$ only
depends on the law of the number of children per parent, that is, it
only depends on $\Gamma$. Then
\begin{eqnarray*}
\E_{\ell} \mathrm{e}^{\theta^T W(n)} & = & \E_{\ell} \bigl(
\mathrm{e}^{\theta^T W(n-1)} \E \bigl( \mathrm{e}^{\theta^T K(n)} \mid K(n-1) ,\ldots, K(1) \bigr)
\bigr)
\\
& = & \E_{\ell} \bigl( \mathrm{e}^{\theta^T W(n-1)} \mathrm{e}^{ \phi(\theta)^T K(n-1)} \bigr)
\\
& = & \E_{\ell} \mathrm{e}^{(\theta+ \phi(\theta))^T K(n-1) + \theta^T W(n-2)}.
\end{eqnarray*}
Defining $g(\theta) = \theta+ \phi(\theta)$ we arrive by recursion at the
formula
\begin{eqnarray*}
\E_{\ell} \mathrm{e}^{\theta^T W(n)} & = & \E_{\ell} \mathrm{e}^{g^{\circ(n-1)}(\theta)^T
K(1) + \theta^T W(0)}
\\
& = & \mathrm{e}^{\phi(g^{\circ(n-1)}(\theta))_{\ell} + \theta_{\ell}}
\\
& = & \mathrm{e}^{g^{\circ n}(\theta)_{\ell}},
\end{eqnarray*}
where for any $n$, $g^{\circ(n)}=g\circ\cdots\circ g$ $n$ times. Or,
in other words, we have the following representation
\[
\log\E_{\ell} \mathrm{e}^{\theta^T W(n)} = g^{\circ n}(\theta)_{\ell}
\]
of the log-Laplace transform of $W(n)$.

Below we show that $\phi$ is a contraction in a neighborhood
containing 0, that is, for some $r > 0$ and a constant $C < 1$ (and a
suitable norm), $\llVert  \phi(s) \rrVert   \leq C\llVert  s \rrVert  $ for $\llVert  s \rrVert   \leq r$. If
$\theta$ is chosen such that
\[
\frac{\llVert \theta\rrVert }{1-C} \leq r
\]
we have $\llVert  \theta \rrVert   \leq r$, and if we assume that
$g^{\circ k}(\theta) \in B(0, r)$ for $k = 1, \ldots, n-1$ then
\begin{eqnarray*}
\bigl\llVert g^{\circ n}(\theta) \bigr\rrVert & \leq& \llVert \theta
\rrVert + \bigl\llVert \phi\bigl(g^{\circ
(n-1)}(\theta)\bigr) \bigr\rrVert
\\
& \leq& \llVert \theta \rrVert + C \bigl\llVert g^{\circ(n-1)}(\theta) \bigr
\rrVert
\\
& \leq& \llVert \theta \rrVert \bigl( 1 + C + C^2 + \cdots+
C^n \bigr)
\\
& \leq& r.
\end{eqnarray*}
Thus, by induction, $g^{\circ n}(\theta) \in B(0, r)$ for all $n \geq
1$.
Since $n\mapsto W_m(n)$ is increasing and goes to $W_m(\infty)$ for $n
\to\infty$, with
$W_m(\infty)$ the total number of points in a cluster of type $m$, and
since $W = \sum_{m} W_m(\infty) = \mathbf{1}^T W(\infty)$, we have by
monotone convergence that for $\vartheta\in\mathbb{R}$
\[
\log\E_{\ell} \mathrm{e}^{\vartheta W} = \lim_{n \to\infty}
g^{\circ n}(\vartheta\mathbf{1})_{\ell}.
\]
By the previous result, the right-hand side is bounded if
$\llvert  \vartheta \rrvert $ is sufficiently small. This completes the proof up to
proving that $\phi$ is a contraction.

To this end, we note that $\phi$ is continuously differentiable (on
$\R^M$ in fact, but a neighborhood around~0 suffices) with derivative
$D\phi(0) = \Gamma$ at 0. Since the spectral radius of $\Gamma$ is
strictly less than 1 there is a $C < 1$ and, by the Householder
theorem, a norm
$\llVert  \cdot \rrVert  $ on $\R^M$ such that for the induced operator norm
of $\Gamma$ we have
\[
\llVert \Gamma \rrVert = \max_{x: \llVert  x \rrVert   \leq1} \llVert \Gamma x \rrVert
< C.
\]
Since the norm is continuous and $D\phi(s)$ is likewise there is
an $r > 0$ such that
\[
\bigl\llVert D\phi(s) \bigr\rrVert \leq C < 1
\]
for $\llVert  s \rrVert   \leq r$. This, in turn, implies that $\phi$ is
Lipschitz continuous in the ball $B(0,r)$ with Lipschitz constant $C$,
and since $\phi(0) = 0$ we get
\[
\bigl\llVert \phi(s) \bigr\rrVert \leq C \llVert s \rrVert
\]
for $\llVert  s \rrVert   \leq r$.
This ends the proof of the lemma.

Note that we have not at all used the explicit formula for $\phi$
above, which is obtainable and simple since the offspring
distributions are Poisson. The only thing we needed was the fact that
$\phi$ is defined in a neighborhood around 0, thus that the offspring
distributions are sufficiently light-tailed.

\subsubsection{Proof of Proposition \texorpdfstring{\protect\ref{momentsHawkes}}{2}}

We use the cluster representation, and we note that
any cluster with ancestral point in $[-n-1,-n]$ must have at least
$n+1-\lceil A\rceil$
points in the cluster if any of the points are to fall in
$[-A,0)$. This follows from the assumption that all the
$h_{\ell}^{(m)}$-functions have support in $[0,1]$. With
$\tilde{N}_{A, \ell}$ the number of points in $[-A,0)$ from a cluster
with ancestral points of type $\ell$, we thus have the bound\vspace*{1pt}
\[
\tilde{N}_{A, \ell} \leq\sum_{n} \sum
_{k=1}^{A_n} \max\bigl\{W_{n,k} - n
+ \lceil A\rceil, 0\bigr\},
\]
where $A_n$ is the number of ancestral points in $[-n-1,-n]$ of type
$\ell$ and $W_{n,k}$ is the number of points in the respective
clusters. Here
the $A_n$'s and the $W_{n,k}$'s are all independent, the $A_n$'s
are Poisson distributed with mean $\nu_{\ell}$ and the $W_{n,k}$'s are
i.i.d. with the same distribution as $W$ in Lemma~\ref{lem:expmoment}. Moreover,
\[
H_n(\vartheta_\ell) := \E_{\ell}
\mathrm{e}^{\vartheta_\ell\max\{W - n + \lceil
A\rceil, 0\}} \leq \P_{\ell}\bigl(W \leq n - \lceil A\rceil\bigr) +
\mathrm{e}^{-\vartheta_\ell(n-\lceil
A\rceil)} \E_{\ell} \mathrm{e}^{\vartheta_\ell W},
\]
which is finite for $\llvert  \vartheta_\ell \rrvert $ sufficiently small according to
Lemma~\ref{lem:expmoment}. Then we can compute an upper bound on the
Laplace transform of $\tilde{N}_{A,\ell}$:\vspace*{1pt}
\begin{eqnarray*}
\E \mathrm{e}^{\vartheta_\ell\tilde{N}_{A,\ell}} & \leq& \prod_n \E\prod
_{k=1}^{A_n} \E \bigl( \mathrm{e}^{\vartheta_\ell\max\{W_{n,k} - n + \lceil A\rceil, 0\}} \mid
A_n \bigr)
\\
& \leq& \prod_n \E H_n(
\vartheta_\ell)^{A_n}
\\
& = & \prod_n \mathrm{e}^{\nu_{\ell}(H_n(\vartheta_\ell)-1)}
\\
& = & \mathrm{e}^{\nu_{\ell} \sum_{n} (H_n(\vartheta_\ell)-1)}.
\end{eqnarray*}
Since $H_n(\vartheta_\ell)-1 \leq \mathrm{e}^{-\vartheta_\ell(n-\lceil A\rceil
)} \E_{\ell} \mathrm{e}^{\vartheta_\ell W}$
we have $\sum_{n} (H_n(\vartheta_\ell)-1) < \infty$, which shows that the
upper bound is finite. To complete the proof, observe that $N_{[-A,0)}=
\sum_{\ell}
\tilde{N}_{A, \ell}$ where $\tilde{N}_{A, \ell}$ for $\ell= 1,
\ldots, M$ are independent. Since all variables are positive, it is
sufficient to take $\theta=\min_\ell\vartheta_\ell$.

\subsubsection{Proof of Proposition \texorpdfstring{\protect\ref{flow}}{3}}
In this paragraph, the notation $\square$ simply denotes a generic positive
absolute constant that may change from line to line. The notation
$\square_{\theta_1,\theta_2,\ldots}$ denotes a positive constant
depending on $\theta_1,\theta_2,\ldots$ that may change from line to line.

Let
%
\begin{equation}
\label{u} u=C_1\sigma\log^{3/2}(T)\sqrt{T}
+C_2 b \bigl(\log(T)\bigr)^{2+\eta},
\end{equation}
where the choices of $C_1$ and $C_2$ will be given later. For any
positive integer $k$ such that $x:=T/(2k)>A$, we have by stationarity:
\begin{eqnarray*}
\P \biggl(\int_0^T \bigl[Z\circ
\mathfrak{S}_t(N) - \E(Z)\bigr] \,\mathrm{d}t \geq u \biggr)&=& \P
\Biggl(\sum_{q=0}^{k-1}\int
_{2qx}^{2qx+x} \bigl[Z\circ\mathfrak{S}_t(N)
- \E(Z)\bigr] \,\mathrm{d}t
\\
&& {} + \int_{2qx+x}^{2qx+2x} \bigl[Z\circ
\mathfrak{S}_t(N) - \E(Z)\bigr]\, \mathrm{d}t \geq u \Biggr)
\\
&\leq& 2 \P \Biggl(\sum_{q=0}^{k-1}\int
_{2qx}^{2qx+x} \bigl[Z\circ\mathfrak
{S}_t(N) - \E(Z)\bigr] \,\mathrm{d}t\geq\frac{u}{2} \Biggr).
\end{eqnarray*}
Similarly to \cite{RBRoy}, we introduce $(\tilde M_q^x)_q$ a sequence
of independent Hawkes processes, each being stationary with intensities
per mark given by $\psi_t^{(m)}$. For each $q$, we then introduce
$M_q^x$ the truncated process associated with $\tilde M_q^x$, where
truncation means that we only consider the points lying in $[2qx -A,
2qx+x]$. So, if we set
%
\begin{eqnarray}\label{split}
F_q&=&\int_{2qx}^{2qx+x} \bigl[Z\circ
\mathfrak{S}_t\bigl(M_q^x\bigr) - \E(Z)
\bigr] \,\mathrm{d}t,
\nonumber\\[-8pt]\\[-8pt]
 \P \biggl(\int_0^T \bigl[Z\circ
\mathfrak{S}_t(N) - \E(Z)\bigr] \,\mathrm{d}t \geq u \biggr)&\leq&2 \P
\Biggl(\sum_{q=0}^{k-1} F_q\geq
\frac{u}{2} \Biggr)+2k\P \biggl(T_e> \frac{T}{2k} -A
\biggr),\nonumber
\end{eqnarray}
where $T_e$ represents the time to extinction of the process. More
precisely, $T_e$ is the last point of the process if in the cluster
representation only ancestral points before 0 are appearing. For more
details, see Section~3 of \cite{RBRoy}. So, denoting $a_{l}$ the
ancestral points with marks $l$ and $H_{a_{l}}^l$ the length of the
corresponding cluster whose origin is $a_{l}$, we have:
\[
T_e=\max_{l\in\{1,\ldots,M\}}\max_{a_{l}}
\bigl\{a_{l}+H_{a_{l}}^l \bigr\}.
\]
But, for any $a>0$,
\begin{eqnarray*}
\P(T_e\leq a)&=&\E \Biggl[\prod_{l=1}^M
\prod_{a_{l}}\E [1_{\{
a_{l}+H_{a_{l}}^l\leq a\}}| a_{l} ]
\Biggr]
\\
&=&\E \Biggl[\prod_{l=1}^M\prod
_{a_{l}}\exp \bigl(\log \bigl(\P\bigl(H_0^l
\leq a-a_l\bigr) \bigr) \bigr) \Biggr]
\\
&=&\E \Biggl[\prod_{l=1}^M\exp \biggl(
\int_{-\infty}^0\log\bigl(\P\bigl(H_0^l
\leq a-x\bigr)\bigr)\,\mathrm{d}\tilde N_x^{(l)} \biggr)
\Biggr],
\end{eqnarray*}
where $\tilde N^{(l)}$ denotes the process associated with the
ancestral points with marks $l$. So,
\begin{eqnarray*}
\P(T_e\leq a) &=&\exp \Biggl(\sum_{l=1}^M
\int_{-\infty}^0 \bigl(\exp\bigl(\log\bigl(\P
\bigl(H_0^l\leq a-x\bigr)\bigr)\bigr)-1 \bigr)
\nu^{(l)}\,\mathrm{d}x \Biggr)
\\
&=&\exp \Biggl(-\sum_{l=1}^M
\nu^{(l)}\int_a^{+\infty}\P
\bigl(H_0^l>u\bigr)\,\mathrm {d}u \Biggr).
\end{eqnarray*}
Now, by Lemma~\ref{lem:expmoment}, there exists some $\vartheta_l>0$,
such that $c_l=\E_\ell(\mathrm{e}^{\vartheta_l W})<+\infty$, where $W$ is the
number of points in the cluster. But if all the interaction functions
have support in $[0,1]$, one always have that $H_0^l<W$. Hence,
\begin{eqnarray*}
\P\bigl(H_0^l>u\bigr)&\leq&\E\bigl[\exp\bigl(
\vartheta_lH_0^l\bigr)\bigr]\exp(-
\vartheta_lu)
\\
&\leq&c_l\exp(-\vartheta_l u).
\end{eqnarray*}
So,
\begin{eqnarray*}
\P(T_e\leq a)&\geq& \exp \Biggl(-\sum
_{l=1}^M \nu^{(l)}\int
_a^{+\infty
}c_l\exp(-
\vartheta_l u)\,\mathrm{d}u \Biggr)
\\
&=& \exp \Biggl(-\sum_{l=1}^M
\nu^{(l)}c_l/\vartheta_l\exp(-
\vartheta_l a) \Biggr)
\\
&\geq& 1-\sum_{l=1}^M
\nu^{(l)}c_l/\vartheta_l\exp(-
\vartheta_l a).
\end{eqnarray*}
So, there exists a constant $C_{\alpha,f_0,A}$ depending on $\alpha,
A$, and $f_0$ such that if we take $k=\lfloor C_{\alpha,A,f_0} T/\log
(T)\rfloor$, then
\[
k\P \biggl(T_e> \frac{T}{2k} -A \biggr)\leq
T^{-\alpha}.
\]
In this case $x=\frac{T}{2k} \approx\log(T)$ is larger than $A$ for $T$
large enough (depending on $A,\alpha, f_0$).

Now, let us focus on the first term $B$ of \eqref{split}, where
\[
B=\P \Biggl(\sum_{q=0}^{k-1}
F_q\geq\frac{u}{2} \Biggr).
\]
Let us consider some $\tilde{\mathcal{N}}$ where $\tilde{\mathcal{N}}$
will be fixed later and let us define the measurable events
\[
\Omega_q= \Bigl\{\sup_t \bigl
\{M_q^x|_{[t-A,t)}\bigr\} \leq\tilde{\mathcal
{N}} \Bigr\},
\]
where $M_q^x|_{[t-A,t)}$ represents the set of points of $M_q^x$ lying
in $[t-A,t)$.
Let us also consider $\Omega=\bigcap_{1\leq q\leq k} \Omega_q$.
Then
\[
B\leq\P \biggl(\sum_q F_q \geq
u/2 \mbox{ and } \Omega \biggr) + \P\bigl(\Omega^c\bigr).
\]
We have $\P(\Omega^c)\leq\sum_q\P(\Omega_q^c)$.
Each $\Omega_q$ can also be easily controlled. Indeed it is sufficient
to split $[2qx-A,2qx+x]$ in intervals of size $A$ (there are about
$\square_{\alpha,A,f_0} \log(T)$ of those) and require that the number
of points in each subinterval is smaller than $\tilde{\mathcal{N}}/2$.
By stationarity, we obtain that
\[
\P\bigl(\Omega_q^c\bigr) \leq\square_{\alpha,A,f_0}
\log(T)\P( N_{[-A,0)}>\tilde {\mathcal{N}}/2).
\]
%
Using Proposition~\ref{momentsHawkes} with $u=\lceil\tilde{\mathcal
{N}}/2\rceil+1/2$, we obtain:
%
\begin{eqnarray}
\label{controlN} \P\bigl(\Omega_q^c\bigr)&\leq&
\square_{\alpha,A,f_0} \log(T) \exp(-\square_{\alpha
,A,f_0} \tilde{\mathcal{N}})
\quad \mbox{and}\nonumber
\\[-8pt]
\\[-8pt]
  \P\bigl(\Omega^c\bigr)&\leq&\square _{\alpha,A,f_0} T
\exp(-\square_{\alpha,A, f_0} \tilde{\mathcal{N}}).
\nonumber
\end{eqnarray}
Note that this control holds for any positive choice of $\tilde{\mathcal
{N}}$. Hence, this gives also the following lemma that will be used later.

\begin{Lemma}
\label{withN}
For any $\mathcal R>0$,
\[
\P \bigl(\mbox{there exists } t \in[0,T] \mid M_q^x
| _{[t-A,t)} > \mathcal {R}\bigr)\leq\square_{\alpha,A,f_0} T
\exp(-\square_{\alpha,A,f_0} \mathcal{R}).
\]
\end{Lemma}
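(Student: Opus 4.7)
The plan is to mimic almost verbatim the argument that produced \eqref{controlN} but over the longer time range $[0,T]$ instead of a single block of length $\approx \log(T)$. The only new bookkeeping is a union bound with $T/A$ terms in place of $\log(T)/A$.

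First I would partition $[-A,T]$ into consecutive subintervals $I_1,\ldots,I_K$ of length $A$ with $K = \lceil (T+A)/A\rceil = \square_A T$. A trivial geometric remark is that any window $[t-A,t)$ with $t\in[0,T]$ is covered by the union of at most two consecutive subintervals $I_j\cup I_{j+1}$. Consequently, on the event that some window $[t-A,t)$ with $t\in[0,T]$ carries more than $\mathcal{R}$ points of $M_q^x$, at least one of the $I_j$'s must carry strictly more than $\mathcal{R}/2$ points. Thus
$$\P\bigl(\exists\, t\in[0,T]\,:\,M_q^x|_{[t-A,t)}>\mathcal{R}\bigr)\;\leq\;\sum_{j=1}^{K} \P\bigl(M_q^x|_{I_j}>\mathcal{R}/2\bigr).$$

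Next I would use that $M_q^x$ is obtained from the stationary Hawkes process $\tilde M_q^x$ by keeping only points in $[2qx-A,2qx+x]$, so $M_q^x|_{I_j}\leq \tilde M_q^x|_{I_j}$. By stationarity of $\tilde M_q^x$, the distribution of $\tilde M_q^x|_{I_j}$ coincides with that of $N_{[-A,0)}$, and Proposition~\ref{momentsHawkes} applied with $u=\mathcal{R}/2$ gives
$$\P\bigl(M_q^x|_{I_j}>\mathcal{R}/2\bigr)\;\leq\;\P\bigl(N_{[-A,0)}>\mathcal{R}/2\bigr)\;\leq\;\mathcal{E}\,e^{-\theta\mathcal{R}/2},$$
where $\mathcal{E}$ and $\theta$ depend only on $A$ and $f^*$.

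Summing over the $K=\square_A T$ subintervals yields the announced bound
$$\P\bigl(\exists\, t\in[0,T]\,:\,M_q^x|_{[t-A,t)}>\mathcal{R}\bigr)\;\leq\;\square_{\alpha,A,f^*}\,T\,\exp\bigl(-\square_{\alpha,A,f^*}\,\mathcal{R}\bigr).$$
There is no serious obstacle here: the previous paragraph already established the harder ingredient (the exponential tail for $N_{[-A,0)}$ via Proposition~\ref{momentsHawkes}), and the present lemma simply extracts the same bound with $\mathcal{R}$ playing the role of the previously chosen $\tilde{\mathcal{N}}$. The only mild care needed is the two-interval covering remark, which is what forces the factor $1/2$ in $\mathcal{R}/2$ and is harmlessly absorbed into the constants.
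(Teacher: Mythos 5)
Your proof is correct and follows essentially the same route as the paper: cut the relevant time range into subintervals of length $A$, cover any sliding window $[t-A,t)$ by two consecutive subintervals, use stationarity of $\tilde M_q^x$ to reduce to the tail of $N_{[-A,0)}$, and invoke Proposition~\ref{momentsHawkes}. The only (harmless) difference is that you union-bound over the full $[-A,T]$, giving $\square_A T$ subintervals, whereas the paper exploits that $M_q^x$ is supported on a block of length $x+A\approx\log(T)$ and hence only needs $\square_{\alpha,A,f^*}\log(T)$ subintervals; both give the stated $T\exp(-\square\,\mathcal{R})$ bound since $\log(T)\leq T$.
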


Hence by taking $\tilde{\mathcal{N}}=C_3\log(T)$ for $C_3$ large enough
this is smaller than $\square_{\alpha,A,f_0} T^{-\alpha'}$, where
$\alpha'=\max(\alpha,2)$.

It remains to obtain the rate of $D:=\P(\sum_q F_q \geq u/2 \mbox{ and
} \Omega)$.
For any positive constant $\theta$ that will be chosen later, we have:
%
\begin{eqnarray}
\label{majoD} D&\leq& \mathrm{e}^{-\sfrac{\theta u}{2}} \E \biggl(\mathrm{e}^{\theta\sum_q F_q} \prod
_q \indic_{\Omega_q} \biggr)
\nonumber
\\[-8pt]
\\[-8pt]
&\leq& \mathrm{e}^{-\sfrac{\theta u}{2}} \prod_q \E
\bigl(\mathrm{e}^{\theta F_q} \indic _{\Omega_q} \bigr)
\nonumber
\end{eqnarray}
since the variables $(M_q^x)_q$ are independent.
But
\[
\E \bigl(\mathrm{e}^{\theta F_q} \indic_{\Omega_q} \bigr)=1+\theta
\E(F_q \indic _{\Omega_q})+\sum_{j\geq2}
\frac{\theta^j}{j!}\E\bigl(F_q^j \indic_{\Omega_q}
\bigr)
\]
and $\E(F_q \indic_{\Omega_q})=\E(F_q)-\E(F_q \indic_{\Omega_q^c})=-\E
(F_q \indic_{\Omega_q^c})$.

Next note that
if for any integer $l$,
\[
l \tilde{\mathcal{N}} < \sup_t M_q^x|_{[t-A,t)}
\leq(l+1) \tilde {\mathcal{N}}
\]
then
\[
\llvert F_q \rrvert \leq x b \bigl[(l+1)^{\eta} \tilde{
\mathcal{N}}^\eta+1\bigr] + x \E(Z).
\]
Hence, cutting $\Omega_q^c$ in slices of the type $\{l \tilde{\mathcal
{N}} < \sup_t M_q^x{} _{[t-A,t)} \leq(l+1) \tilde{\mathcal{N}}\}$ and
using Lemma~\ref{withN}, we obtain by taking $C_3$ large enough,
\begin{eqnarray*}
\bigl\llvert \E(F_q \indic_{\Omega_q}) \bigr\rrvert =\bigl
\llvert \E(F_q \indic_{\Omega_q^c}) \bigr\rrvert &\leq& \sum
_{l=1}^{+\infty} x\bigl(b \bigl[(l+1)^{\eta}
\tilde{\mathcal{N}}^\eta+1\bigr] + \bigl\llvert \E(Z) \bigr\rrvert
\bigr)
\\
&&{}\times \P\bigl(\mbox{there exists } t \in[0,T] \mid \bigl\{ M_q^x|_{[t-A,t)}
\bigr\} > \ell\tilde{\mathcal{N}}\bigr)
\\
&\leq& \square_{\alpha,A,f_0}\sum_{l=1}^{+\infty}
x\bigl(b \bigl[(l+1)^{\eta} \tilde{\mathcal{N}}^\eta+1\bigr] +
\bigl|\E(Z)\bigr|\bigr) \log(T) \mathrm{e}^{-\square_{\alpha
,A,f_0} l \tilde{\mathcal{N}}}
\\
&\leq&\square_{\alpha,A,f_0}\sum_{l=1}^{+\infty}
x\bigl(b \tilde{\mathcal {N}}^\eta+ \bigl|\E(Z)\bigr|\bigr) \log(T)
2^{l\eta}\mathrm{e}^{-\square_{\alpha,A,f_0} l
\tilde{\mathcal{N}}}
\\
&\leq& \square_{\alpha,\eta,A,f_0} \log^2(T) b \tilde{
\mathcal{N}}^\eta \frac
{\mathrm{e}^{-\square_{\alpha,A,f_0} \tilde{\mathcal{N}}}}{1-2^\eta \mathrm{e}^{-\square
_{\alpha,A,f_0} \tilde{\mathcal{N}}}}
\\
&\leq& z_1:=\square_{\alpha,\eta,A,f_0} b T^{-\alpha'}.
\end{eqnarray*}
Note that in the previous inequalities, we have bounded $\llvert  \E(Z) \rrvert $ by
$b\E[N_{[-A,0)}^\eta]$. In the same way, one can bound
\[
\E\bigl(F_q^j \indic_{\Omega_q}\bigr)\leq\E
\bigl(F_q^2 \indic_{\Omega_q}\bigr)
z_b^{j-2},
\]
with $z_b:=xb[\tilde{\mathcal{N}}^\eta+1]+x\E(Z)=\square_{\alpha,\eta
,A,f_0} b \log(T)^{1+\eta}$.
One can also note that by stationarity,
\begin{eqnarray*}
\E\bigl(F_q^2 \indic_{\Omega_q}\bigr)& \leq& x \E
\biggl[ \int_{2qx}^{2qx+x} \bigl[Z\circ
\theta_s\bigl(M_q^x\bigr)-\E(Z)
\bigr]^2 \indic_{\{\mathrm{for\ all\ } t,
M_q^x|_{[t-A,t)} \leq\tilde{\mathcal{N}}\}} \,\mathrm{d}s \biggr]
\\
&\leq& x \E \biggl[ \int_{2qx}^{2qx+x} \bigl[Z\circ
\theta_s\bigl(M_q^x\bigr)-\E(Z)
\bigr]^2 \indic_{\{ M_q^x|_{[s-A,s)} \leq\tilde{\mathcal{N}}\}} \,\mathrm {d}s \biggr]
\\
&\leq& x^2 \E\bigl(\bigl[Z(N)-\E(Z)\bigr]^2
\indic_{N_{[-A,0)} \leq\tilde{\mathcal
{N}}}\bigr)
\\
& \leq& z_v:= \square_{\alpha,\eta,A,f_0}\bigl(\log(T)
\bigr)^2 \sigma^2.
\end{eqnarray*}
Now let us go back to \eqref{majoD}. We have that
\begin{eqnarray*}
D&\leq& \exp \biggl[-\frac{\theta u}{2} + k \ln \biggl(1+\theta z_1
+ \sum_{j\geq2} z_v z_b^{j-2}
\frac{\theta^j}{j!} \biggr) \biggr]
\\
&\leq& \exp \biggl[-\theta \biggl(\frac{ u}{2}-kz_1 \biggr) +
k \sum_{j\geq2} z_v z_b^{j-2}
\frac{\theta^j}{j!} \biggr],
\end{eqnarray*}
using that $\ln(1+u)\leq u$.
It is sufficient now to recognize a step of the proof of the Bernstein
inequality (weak version see \cite{stflour}, page 25). Since $kz_1=\square
_{\alpha,\eta,s} b T^{1-\alpha'}/(\log(T))$, one can choose $\alpha'>1,
C_1$ and $C_2$ in the definition (\ref{u}) of $u$ (not depending on
$b$) such that $u/2-kz_1 \geq\sqrt{2kz_v z}+\frac{1}3z_b z$ for some
$z=C_4\log(T)$, where $C_4$ is a constant. Hence,
\[
D\leq\exp \biggl[-\theta\biggl(\sqrt{2kz_v z}+
\frac{1}3z_b z\biggr) + k \sum
_{j\geq
2} z_v z_b^{j-2}
\frac{\theta^j}{j!} \biggr].
\]
One can choose accordingly $\theta$ (as for the proof of the Bernstein
inequality) to obtain a bound in $\mathrm{e}^{-z}$. It remains to choose $C_4$
large enough and only depending on $\alpha,\eta,A$ and $f_0$ to
guarantee that $D\leq \mathrm{e}^{-z} \leq\square_{\alpha,\eta,A,f_0} T^{-\alpha
}$. This concludes the proof of the proposition.

\subsubsection{Proof of Proposition \texorpdfstring{\protect\ref{eqnormH}}{4}}
Let $\Q$ denote a measure such that under $\Q$ the distribution of the
full point process restricted to $(-\infty, 0]$ is identical to the
distribution under $\P$ and such that on $(0, \infty)$ the process
consists of
independent components each being a homogeneous Poisson process with
rate 1. Furthermore, the Poisson processes should be
independent of the process on $(-\infty,0]$. 
From Corollary~5.1.2 in
\cite{Jacobsen:2006}, the likelihood process is given by
\[
\mathcal{L}_t = \exp \biggl(Mt - \sum_{m}
\int_0^t \lambda_u^{(m)}\,
\mathrm{d}u + \sum_{m}\int_0^t
\log\lambda_u^{(m)} \,\mathrm{d}N_u^{(m)}
\biggr)
\]
and we have for $t \geq0$ the relation
%
\begin{equation}
\label{eq:ChangeOfMeasure} \E_{\P} \kappa_t(\mathbf{f})^2
= \E_{\Q} \kappa_t(\mathbf{f})^2
\mathcal{L}_t,
\end{equation}
where $\E_{\P}$ and $\E_{\Q}$ denote the expectation with respect to $\P
$ and $\Q$, respectively. Let, furthermore, $\tilde{N}_1 = N_{[-1,0)}$
denote the
total number of points on $[-1,0)$.
Proposition~\ref{eqnormH} will be an easy consequence of the following lemma.

\begin{Lemma} \label{lem:measureChangeBound} If the point process is
stationary under $\mathbb{P}$, if
\[
\mathrm{e}^{d} \leq\lambda_t^{(m)} \leq a
(N_1 + \tilde{N}_1) + b
\]
for $t \in[0, 1]$ and for constants $d \in\mathbb{R}$ and $a, b >
0$, and if $\E_{\mathbb{P}} (1+\varepsilon)^{\tilde{N}_1} < \infty$ for
some $\varepsilon> 0$ then for any $\mathbf{f}$,
%
\begin{equation}
\label{eq:measureChangeBound} Q(\mathbf{f},\mathbf{f}) \geq\zeta\llVert \mathbf{f} \rrVert
^2
\end{equation}
for some constant $\zeta> 0$.
\end{Lemma}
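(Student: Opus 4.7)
The plan is to combine the change-of-measure identity \eqref{eq:ChangeOfMeasure} with an explicit Poisson moment computation under $\Q$, after a deterministic lower bound on the likelihood $\mathcal{L}_1$. From the definition of $\mathcal{L}_1$ together with the hypotheses $\log\lambda_u^{(m)} \geq d$ and $\lambda_u^{(m)} \leq a(N_1 + \tilde N_1) + b$ on $[0,1]$, I would first show that $\mathcal{L}_1 \geq c_0\exp(-c_1(N_1 + \tilde N_1))$ for explicit positive constants $c_0, c_1$ depending only on $a, b, d, M$. Substituted into \eqref{eq:ChangeOfMeasure}, this yields the clean starting point
$$Q({\bf f},{\bf f}) = \E_\Q[\kappa_1({\bf f})^2 \mathcal{L}_1] \geq c_0\,\E_\Q\!\left[\kappa_1({\bf f})^2 e^{-c_1(N_1 + \tilde N_1)}\right].$$

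Next I would exploit the structure of $\Q$: on $(-\infty,0]$ the law coincides with $\P$, while on $(0,\infty)$ the coordinates are $M$ independent rate-$1$ Poisson processes, independent of the past. Since the $g_\ell$ have support in $(0,1]$, we have $\kappa_1({\bf f}) = \mu + \sum_{\ell=1}^M \int_0^1 g_\ell(1-u)\,dN_u^{(\ell)}$, so both $\kappa_1({\bf f})$ and $N_1$ are measurable with respect to the future $\sigma$-algebra while $\tilde N_1$ is measurable with respect to the past $\sigma$-algebra. Independence under $\Q$ therefore yields
$$\E_\Q\!\left[\kappa_1({\bf f})^2 e^{-c_1(N_1 + \tilde N_1)}\right] = \E_\P\!\left[e^{-c_1 \tilde N_1}\right]\cdot\E_\Q\!\left[\kappa_1({\bf f})^2 e^{-c_1 N_1}\right],$$
and the first factor is strictly positive because $\tilde N_1 < \infty$ almost surely under the hypothesis $\E_\P(1+\varepsilon)^{\tilde N_1}<\infty$. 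The second factor is computed explicitly using the Poisson Laplace functional $\E_\Q\exp\bigl(\int h(u)\,dN_u^{(\ell)}\bigr) = \exp\bigl(\int_0^1(e^{h(u)}-1)\,du\bigr)$: two differentiations in an auxiliary parameter, together with the independence of the $N^{(\ell)}$, deliver the closed form
$$\E_\Q\!\left[\kappa_1({\bf f})^2 e^{-c_1 N_1}\right] = e^{M(e^{-c_1}-1)}\Bigl[(\mu + e^{-c_1} S)^2 + e^{-c_1} V\Bigr],$$
where $S = \sum_{\ell=1}^M\int_0^1 g_\ell$ and $V = \sum_{\ell=1}^M\int_0^1 g_\ell^2$.

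The last step compares this bracket with $\norm{{\bf f}}^2 = \mu^2 + V$. The cross term $(\mu + e^{-c_1}S)^2$ can vanish (take $\mu = -e^{-c_1}S$), so it cannot simply be dropped; I would instead seek $\zeta'>0$ with $(\mu + \alpha S)^2 + \alpha V \geq \zeta'(\mu^2 + V)$ for all $\mu \in \R$, where $\alpha = e^{-c_1}$. Treated as a quadratic in $\mu$, the inequality is equivalent to the discriminant condition $\alpha^2\zeta' S^2 \leq (1-\zeta')(\alpha - \zeta') V$. This is the delicate point where I expect the main effort to lie: combining the two Cauchy--Schwarz inequalities $(\int_0^1 g_\ell)^2 \leq \int_0^1 g_\ell^2$ and $\bigl(\sum_{\ell=1}^M \int_0^1 g_\ell\bigr)^2 \leq M\sum_{\ell=1}^M(\int_0^1 g_\ell)^2$ gives $S^2 \leq MV$, so any $\zeta'\in(0,\alpha)$ small enough to satisfy $\alpha^2 M\zeta' \leq (1-\zeta')(\alpha-\zeta')$ works, and such a $\zeta'$ exists by continuity near $0$. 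Collecting constants yields $Q({\bf f},{\bf f})\geq \zeta\norm{{\bf f}}^2$ with $\zeta = c_0\,\zeta'\,e^{M(e^{-c_1}-1)}\,\E_\P[e^{-c_1 \tilde N_1}] > 0$, independent of ${\bf f}$, which is the claim.
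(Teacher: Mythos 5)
Your proof is correct, and it takes a genuinely different route from the paper's. The paper does \emph{not} plug the lower bound $\mathcal{L}_1 \geq e^{M(1-b)}e^{(d-aM)N_1}e^{-aM\tilde N_1}$ directly into $Q({\bf f},{\bf f})=\E_\Q[\kappa_1({\bf f})^2\mathcal{L}_1]$. Instead it applies H\"older's inequality with exponents $p,q$ to write $\E_\Q\kappa_1^2 \leq Q({\bf f},{\bf f})^{1/p}\left(\E_\Q\kappa_1^2\mathcal{L}_1^{1-q}\right)^{1/q}$, cites an external computation (page 32 of \cite{RBS}) for $\E_\Q\kappa_1^2 \geq c'\norm{{\bf f}}^2$, and separately shows $\E_\Q\kappa_1^2\mathcal{L}_1^{1-q}\leq c''\norm{{\bf f}}^2$ (using the likelihood lower bound, the independence of past and future under $\Q$, the Cauchy--Schwarz bound $\kappa_1^2\leq (\mu^2+\sum_\ell\int g_\ell^2\,\mathrm{d}N^{(\ell)})(1+N_1)$, and the Poisson structure), then combines to get $\zeta=(c')^p/(c'')^{p-1}$. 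The reason the paper goes through H\"older is that a \emph{negative} power of $\mathcal{L}_1$ produces a factor $\tilde\rho^{\tilde N_1}$ with $\tilde\rho=e^{(q-1)aM}>1$, and choosing $q$ near $1$ is precisely where the integrability assumption $\E_\P(1+\varepsilon)^{\tilde N_1}<\infty$ is consumed. Your direct approach sidesteps this entirely: the factor you pick up is $e^{-c_1\tilde N_1}\leq 1$, so you only need $\tilde N_1<\infty$ a.s., which is a weaker consequence of the same hypothesis. You also replace the citation to \cite{RBS} and the Cauchy--Schwarz step by an exact Laplace-functional computation followed by the discriminant argument; your identity $\E_\Q[\kappa_1^2 e^{-c_1 N_1}] = e^{M(e^{-c_1}-1)}[(\mu+e^{-c_1}S)^2 + e^{-c_1}V]$ is correct, and the step $S^2\leq MV$ (two applications of Cauchy--Schwarz) together with the continuity argument near $\zeta'=0$ closes the gap. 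Net: your version is more self-contained and economical in its use of the integrability hypothesis, at the cost of an explicit Poisson moment calculation; the paper's version is more modular and offloads the lower bound on $\E_\Q\kappa_1^2$ to an earlier reference.
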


\begin{pf} 
We use H\"{o}lders inequality on $\kappa_1(\mathbf{f})^{\sfrac{2}{p}}
\mathcal{L}_1^{\sfrac{1}{p}}$ and $\kappa_1(\mathbf{f})^{\sfrac{2}{q}}
\mathcal{L}_1^{-\sfrac{1}{p}}$ to get
%
\begin{equation}
\label{eq:CSbound} \E_{\Q} \kappa_1(\mathbf{f})^2
\leq \bigl(\E_{\Q} \kappa_1(\mathbf{f})^2
\mathcal{L}_1 \bigr)^{\sfrac{1}{p}} \bigl(\E_{\Q}
\kappa_1(\mathbf{f})^2\mathcal{L}_1^{-\sfrac{q}{p}}
\bigr)^{\sfrac
{1}{q}} = Q(\mathbf{f},\mathbf{f})^{\sfrac{1}{p}} \bigl(
\E_{\Q} \kappa_1(\mathbf{f})^2
\mathcal{L}_1^{1-q} \bigr)^{\sfrac{1}{q}},
\end{equation}
where $\frac{1}{p} + \frac{1}{q} = 1$. We choose $q \geq1$ (and thus
$p$) below to
make $q-1$ sufficiently small.
For the left-hand side, we have by independence of the homogeneous Poisson
processes that if $\mathbf{f}=(\mu,(g_\ell)_{\ell=1,\ldots,M})$,
\begin{eqnarray*}
\E_{\Q} \kappa_1(\mathbf{f})^2 & = & \bigl(
\E_{\Q} \kappa_1(\mathbf{f})\bigr)^2 +
\mathbb{V}_{\Q} \kappa_1(\mathbf{f})
\\
& = & \biggl(\mu+ \sum_{\ell} \int
_0^{1} g_{\ell}(u)\, \mathrm{d} u
\biggr)^2 + \sum_{\ell} \int
_0^{1} g_{\ell}(u)^2
\,\mathrm{d} u.
\end{eqnarray*}
Exactly as on page 32 in \cite{RBS} there exists $c'>0$ such that
%
\begin{equation}
\label{eq:lower1} \E_{\Q} \kappa_1(\mathbf{f})^2
\geq c' \biggl(\mu^2 + \sum
_{\ell} \int_0^{1}
g_{\ell}^2(u)\, \mathrm{d} u \biggr) = c'
\llVert \mathbf{f} \rrVert ^2.
\end{equation}
To bound the second factor on the right-hand side in (\ref{eq:CSbound}) we
observe, by assumption, that we have the lower bound
\[
\mathcal{L}_1 \geq \mathrm{e}^{M(1-b)}\mathrm{e}^{(d - aM)N_1}\mathrm{e}^{-aM \tilde{N}_1}
\]
on the likelihood process. Under $\Q$ we have that $(\kappa_1(\mathbf
{f}), N_1)$ and
$\tilde{N}_1$ are independent, and with $\rho=
\mathrm{e}^{(q-1)(aM -d)}$ and $\tilde{\rho} = \mathrm{e}^{(q-1)(aM)}$ we get that
\[
\E_{\Q} \kappa_1(\mathbf{f})^2
\mathcal{L}_1^{1-q} \leq \mathrm{e}^{(q-1)M(b-1)}
\E_{\Q} \tilde{\rho}^{\tilde{N}_1} \E_{\Q}
\kappa_1(\mathbf{f})^2\rho^{N_1}.
\]
Here we choose $q$ such that $\tilde{\rho}$ is sufficiently close to 1
to make sure
that $\E_{\Q} \tilde{\rho}^{\tilde{N}_1} = \E_{\P} \tilde{\rho}^{\tilde
{N}_1} < \infty$ (see Proposition~\ref{momentsHawkes}).
Moreover, by Cauchy--Schwarz' inequality
%
\begin{equation}
\label{eq:CSineq} \kappa_1^2(\mathbf{f}) \leq \biggl(
\mu^2 + \sum_\ell\int
_{0}^{1-} g_\ell^2(1-u)
\,\mathrm{d}N^{(\ell)}_u \biggr) (1+N_1).
\end{equation}
Under $\Q$ the point processes on $(0, \infty)$ are homogeneous Poisson
processes with
rate 1 and $N_1$, the total number of points, is Poisson.
This implies that conditionally on
$(N_1^{(1)},\ldots,N_1^{(M)})=(n^{(1)},\ldots,n^{(M)})$ the
$n^{(m)}$-points for the $m$th process are uniformly
distributed on $[0,1]$, hence
%
\begin{eqnarray}
\label{eq:lower2} \E_{\Q} \kappa_1(\mathbf{f})^2
\mathcal{L}_1^{1-q} &\leq& \biggl(\mu^2 + \sum
_\ell\int_{0}^{1}
g_{\ell}^2(u) \,\mathrm{d} u \biggr) \underbrace{\mathrm{e}^{(q-1)M(b-1)}
\E_{\Q} \tilde{\rho}^{\tilde{N}_1} \E_{\Q}
(1+N_1)^2\rho^{N_1}}_{c''}\nonumber
\\[-8pt]
\\[-8pt] &=&
c'' \llVert \mathbf{f} \rrVert ^2.
\nonumber
\end{eqnarray}
Combining (\ref{eq:lower1}) and (\ref{eq:lower2}) with
(\ref{eq:CSbound}) we get that
\[
c' \llVert \mathbf{f} \rrVert ^2 \leq
\bigl(c''\bigr)^{\sfrac{1}{q}}\llVert \mathbf{f}
\rrVert ^{\sfrac
{2}{q}} Q(\mathbf{f},\mathbf{f})^{\sfrac{1}{p}}
\]
or by rearranging that
\[
Q(\mathbf{f},\mathbf{f}) \geq\zeta\llVert \mathbf{f} \rrVert ^2
\]
with $\zeta= (c')^p/(c'')^{p-1}$.
\end{pf}

For the Hawkes process, it follows that if $\nu^{(m)} > 0$ and if
\[
\sup_{t \in[0,1]} h^{(m)}_{\ell}(t) < \infty
\]
for $l,m = 1, \ldots, M$ then for $t \in[0,1]$ we have $\mathrm{e}^{d} \leq
\lambda_t^{(m)} \leq a (N_1
+ \tilde{N}_1) + b$
with
\[
d = \log\nu^{(m)},\qquad  a = \max_{l} \sup
_{t \in[0,1]} h^{(m)}_{\ell}(t),\qquad  b =
\nu^{(m)}.
\]
Proposition~\ref{momentsHawkes} proves that there exists $\e>0$ such
that $\E_{\mathbb{P}} (1+\varepsilon)^{\tilde{N}_1} < \infty$. This
completes the proof of Proposition~\ref{eqnormH}.
\subsection{Proofs of the results of Sections \texorpdfstring{\protect\ref
{sec:aalenvrai}}{4.2} and \texorpdfstring{\protect\ref{LassoH}}{5.2}}

\subsubsection{Proof of Propositions \texorpdfstring{\protect\ref{Qnorm}}{5} 
and \texorpdfstring{\protect\ref{Prop:Aalen}}{1}}\label{2proofs}
We first prove Proposition~\ref{Qnorm}.
As in the proof of Proposition~\ref{flow}, we use the notation $\square$.
Note that for any $\p_1$ and any $\p_2$ belonging to $\Phi$,
\[
G_{\p_1,\p_2}=\sum_{m=1}^M\int
_0^T\kappa_t\bigl(\mathbf{
\p_1}^{(m)}\bigr)\kappa _t\bigl(\mathbf{
\p_2}^{(m)}\bigr)\,\mathrm{d}t
\]
and $\E(G_{\p_1,\p_2})=T\sum_{m=1}^M Q(\mathbf{\p_1}^{(m)},\mathbf{\p
_2}^{(m)})$ by using (\ref{Q}).
This implies that
\[
\E\bigl(a'Ga\bigr)=a'\E(G)a=T\sum
_m Q\bigl(\mathbf{f}_a^{(m)},
\mathbf{f}_a^{(m)}\bigr).
\]
Hence by Proposition~\ref{eqnormH}, $\E(a'Ga)\geq T\zeta \sum_m \llVert  \mathbf{f}_a^{(m)} \rrVert ^2=T\zeta\llVert  f_a  \rrVert ^2$ by definition of the norm on
$\mathcal{H}$. Since $\Phi$ is an orthonormal system, this implies that
$\E(a'Ga)\geq T\zeta\llVert  a  \rrVert _{\ell_2}$. Hence, to show that $\Omega_c$
is a large event for some $c>0$, it is sufficient to show that for some
$0<\epsilon<\zeta$, with high probability, for any $a \in\R^\Phi$,
%
\begin{equation}
\label{cequonveut} \bigl\llvert a'Ga-a'\E(G) a \bigr
\rrvert \leq T\epsilon\llVert a \rrVert _{\ell_2}^2.
\end{equation}
Indeed, (\ref{cequonveut}) implies that, with high probability, for any
$a \in\R^\Phi$,
\[
a'Ga\geq a'\E(G) a- T \epsilon\llVert a \rrVert
_{\ell_2} \geq T(\zeta-\epsilon) \llVert a \rrVert _{\ell_2},
\]
and the choice $c=T(\zeta-\epsilon)$ is convenient. So, first one has
to control all the coefficients of $G-\E(G)$. For all $\p,\rho\in\Phi
$, we apply Proposition~\ref{flow} to
\[
Z(N)=\sum_m \psi_0^{(m)}(
\p) \psi_0^{(m)}(\rho).
\]
Note that $Z$ only depends on points lying in $[-1,0)$. 
Therefore, $\llvert  Z(N) \rrvert \leq2M\llVert  \p  \rrVert _\infty\llVert  \rho  \rrVert _\infty
(1+N_{[-1,0)}^2 )$. This leads to
\[
\P \biggl(\frac{1}{T} \bigl|G_{\p,\rho}-\E(G_{\p,\rho}) \bigr|\geq
x_{\p
,\rho} \biggr)\leq\square_{\beta,f_0} T^{-\beta}
\]
with
\[
x_{\p,\rho}=\square_{\beta,f_0,M}\bigl[\sigma_{\p,\rho}\log
^{3/2}(T)T^{-1/2}+\llVert \p \rrVert _\infty\llVert
\rho \rrVert _\infty\log^4(T)T^{-1}\bigr]
\]
and
\[
\sigma_{\p,\rho}^2=\E \biggl[ \biggl[\sum
_m \psi_0^{(m)}(\p)
\psi_0^{(m)}(\rho)-\E \biggl(\sum
_m \psi_0^{(m)}(\p)
\psi_0^{(m)}(\rho) \biggr) \biggr]^2\indic
_{N_{[-1,0)}\leq\tilde{\mathcal{N}}} \biggr].
\]
Hence, with probability larger than $1- \square_{\beta,f_0} \llvert  \Phi \rrvert ^2
T^{-\beta}$ one has that
\[
\bigl\llvert a'Ga-a'\E(G) a \bigr\rrvert \leq
\square_{\beta,f_0} \biggl(\sum_{\p,\rho\in\Phi} \llvert
a_\p \rrvert \llvert a_\rho \rrvert \bigl[
\sigma_{\p,\rho}\log^{3/2}(T)T^{1/2}+\llVert \p \rrVert
_\infty \llVert \rho \rrVert _\infty\log^4(T)
\bigr] \biggr).
\]
Hence, for any positive constant $\delta$ chosen later,
%
\begin{eqnarray}
\label{mamajo} &&\bigl\llvert a'Ga-a'\E(G) a \bigr
\rrvert \nonumber\\
&&\quad \leq \square_{\beta,f_0} \biggl[T\sum_{\p,\rho\in\Phi}
\llvert a_\p \rrvert \llvert a_\rho \rrvert \biggl[\delta
\frac{\sigma_{\p,\rho}^2}{\llVert  \p  \rrVert _\infty\llVert  \rho
\rrVert _\infty}
\\
&&\hphantom{  \square_{\beta,f_0} \biggl[T\sum_{\p,\rho\in\Phi}
\llvert a_\p \rrvert \llvert a_\rho \rrvert \biggl[}\qquad {} + \biggl[\frac{1}{\delta\log(T)} + 1 \biggr]\llVert \p \rrVert
_\infty\llVert \rho \rrVert _\infty\frac{\log^4(T)}{T} \biggr]
\biggr].\nonumber
\end{eqnarray}
Now let us focus on
$E:=\sum_{\p,\rho\in\Phi} \llvert  a_\p \rrvert  \llvert  a_\rho \rrvert  \frac{\sigma_{\p,\rho
}^2}{\llVert  \p  \rrVert _\infty\llVert  \rho  \rrVert _\infty}$. First, we have:
\[
E\leq2 \sum_{\p,\rho\in\Phi} \llvert a_\p \rrvert
\llvert a_\rho \rrvert \frac{\E([\sum_m \psi
_0^{(m)}(\p) \psi_0^{(m)}(\rho)]^2\indic_{N_{[-1,0)}\leq\tilde{\mathcal{N}}}
)+(\E[\sum_m \psi_0^{(m)}(\p) \psi_0^{(m)}(\rho)])^2}{ \llVert  \p  \rrVert _\infty\llVert  \rho \rrVert _\infty}
\]
with $\tilde{\mathcal{N}}:=\square_{\beta,f_0} \log(T)$.
Next,
\[
\sum_m \psi_0^{(m)}(\p)
\psi_0^{(m)}(\rho) \leq 2M\llVert \p \rrVert
_\infty\llVert \rho \rrVert _\infty\bigl(1+N_{[-1,0)}^2
\bigr).
\]
Hence, if $N_{[-1,0)}\leq\tilde{\mathcal{N}}=\square_{\beta,f_0} \log
(T)$, for $T$ large enough,
\[
\sum_m \psi_0^{(m)}(\p)
\psi_0^{(m)}(\rho) \leq\square_{\beta,M,f_0}\llVert \p
\rrVert _\infty\llVert \rho \rrVert _\infty\log^2(T)
\]
and
\[
\E\biggl(\sum_m \psi_0^{(m)}(
\p) \psi_0^{(m)}(\rho)\biggr)\leq\square_{\beta,M,f_0}
\llVert \p \rrVert _\infty\llVert \rho \rrVert _\infty
\log^2(T).
\]
Hence,
\[
E\leq \square_{\beta,M,f_0} \log^2(T) \sum
_{\p,\rho\in\Phi} \llvert a_\p \rrvert \llvert
a_\rho \rrvert \E \biggl( \biggl|\sum_m
\psi_0^{(m)}(\p) \psi_0^{(m)}(\rho) \biggr|
\biggr).
\]
But note that for any $f$, $\llvert  \psi_0^{(m)}(f) \rrvert \leq\psi_0^{(m)}(\llvert  f \rrvert )$ where
$\llvert  f \rrvert =((\llvert  \mu^{(m)} \rrvert ,(\llvert  g_\ell^{(m)} \rrvert )_{\ell=1,\ldots,M})_{m=1,\ldots,M})$. Therefore,
\begin{eqnarray*}
E&\leq& \square_{\beta,M,f_0} \log^2(T) \sum
_{\p,\rho\in\Phi} \llvert a_\p \rrvert \llvert
a_\rho \rrvert \E \biggl(\sum_m
\psi_0^{(m)}\bigl(\llvert \p \rrvert \bigr)
\psi_0^{(m)}\bigl(\llvert \rho \rrvert \bigr)   \biggr)
\\
&\leq& \square_{\beta,M,f_0} \log^2(T) \sum
_m\E \biggl( \biggl[\sum_{\p\in
\Phi}
\llvert a_{\p} \rrvert \psi_0^{(m)}\bigl(\llvert
\p \rrvert \bigr) \biggr]^2 \biggr)
\\
&\leq& \square_{\beta,M,f_0} \log^2(T) \sum
_m\E \biggl( \biggl[ \psi_0^{(m)}
\biggl(\sum_{\p\in\Phi}\llvert a_{\p} \rrvert
\llvert \p \rrvert \biggr) \biggr]^2 \biggr).
\end{eqnarray*}
But if $\p=(\mu^{(m)}_\p, ((g_\p)_\ell^{(m)})_\ell)_m$, then
\[
\biggl[ \psi_0^{(m)} \biggl(\sum
_{\p\in\Phi}\llvert a_{\p} \rrvert \llvert \p \rrvert
\biggr) \biggr]^2= \Biggl[\sum_\p
\llvert a_\p \rrvert \mu^{(m)}_\p+ \sum
_{\ell=1}^M\int_{-1}^{0-}
\sum_\p\llvert a_\p \rrvert \bigl
\llvert (g_\p)_\ell^{(m)} \bigr\rrvert (-u)
\,\mathrm{d}N^{(\ell)}_u \Biggr]^2.
\]
If one creates artificially a process $N^{(0)}$ with only one point and
if we decide that $(g_\p)_0^{(m)}$ is the constant function equal to $\mu^{(m)}
_\p$, this can also be rewritten as
\[
\biggl[ \psi_0^{(m)} \biggl(\sum
_{\p\in\Phi}\llvert a_{\p} \rrvert \llvert \p \rrvert
\biggr) \biggr]^2= \Biggl[ \sum_{\ell=0}^M
\int_{-1}^{0-} \sum_\p
\llvert a_\p \rrvert \bigl\llvert (g_\p)_\ell^{(m)}
\bigr\rrvert (-u) \,\mathrm{d}N^{(\ell)}_u
\Biggr]^2.
\]
Now we apply the Cauchy--Schwarz inequality for the measure $\sum_\ell
\mathrm{d}N^{(\ell)}$, which gives
\[
\biggl[ \psi_0^{(m)} \biggl(\sum
_{\p\in\Phi}\llvert a_{\p} \rrvert \llvert \p \rrvert
\biggr) \biggr]^2\leq (N_{[-1,0)}+1)\sum
_{\ell=0}^M \int_{-1}^{0-}
\biggl[\sum_\p\llvert a_\p \rrvert
\bigl\llvert (g_\p)_\ell^{(m)} \bigr\rrvert (-u)
\biggr]^2 \,\mathrm{d}N^{(\ell)}_u.
\]
Consequently,
\begin{eqnarray*}
E&\leq& \square_{\beta,M,f_0} \log^2(T)\sum
_{m=1}^M\sum_{\ell=0}^M
\E \biggl((N_{[-1,0)}+1) \int_{-1}^{0-}
\biggl[\sum_\p\llvert a_\p \rrvert
\bigl\llvert (g_\p)_\ell^{(m)} \bigr\rrvert (-u)
\biggr]^2 \,\mathrm{d}N^{(\ell
)}_u \biggr)
\\
&\leq& \square_{\beta,M,f_0} \log^2(T)\sum
_{m=1}^M\sum_{\ell=0}^M
\sum_{\p,\rho\in
\Phi}\llvert a_\p \rrvert \llvert
a_\rho \rrvert
\\
&&{} \times\E \biggl(\int_{-1}^{0-} (N_{[-1,0)}+1)
\bigl\llvert (g_\p)_\ell^{(m)} \bigr\rrvert (-u)
\bigl\llvert (g_\rho)_\ell^{(m)} \bigr\rrvert (-u)
\,\mathrm{d}N^{(\ell)}_u \biggr).
\end{eqnarray*}
Now let us use the fact that for every $x,y\geq0$, $\eta,\theta>0$
that will be chosen later,
\[
xy - \eta \mathrm{e}^{\theta x} \leq\frac{y}{\theta} \bigl[\log(y)-\log(\eta
\theta)-1 \bigr],
\]
with the convention that $y\log(y)=0$ if $y=0$.
Let us apply this to $x=N_{[-1,0)}+1$ and $y=\llvert  (g_\p)_\ell^{(m)} \rrvert (-u)\llvert  (g_\rho
)_\ell^{(m)} \rrvert (-u)$. We obtain that
\begin{eqnarray*}
E&\leq&\square_{\beta,M,f_0} \eta\log^2(T)\sum
_{m=1}^M \sum_{\p,\rho
\in\Phi}
\llvert a_\p \rrvert \llvert a_\rho \rrvert \E
\bigl((N_{[-1,0)}+1) \mathrm{e}^{\theta
(N_{[-1,0)}+1)} \bigr)
\\
&&{}+\square_{\beta,M,f_0} \theta^{-1} \log^2(T)\sum
_{m=1}^M\sum_{\ell=0}^M
\sum_{\p,\rho\in
\Phi}\llvert a_\p \rrvert \llvert
a_\rho \rrvert
\\
&&\hphantom{{}+{}}{}\times\E \biggl(\int_{-1}^{0^-}\bigl\llvert
(g_\p)_\ell^{(m)} \bigr\rrvert \bigl\llvert
(g_\rho)_\ell^{(m)} \bigr\rrvert (-u) \bigl[\log
\bigl(\bigl\llvert (g_\p)_\ell^{(m)} \bigr\rrvert
\bigl\llvert (g_\rho)_\ell^{(m)} \bigr\rrvert (-u)
\bigr)-\log(\eta\theta)-1 \bigr] \,\mathrm{d}N^\ell_u
\biggr).
\end{eqnarray*}
Since for $\ell>0$, $\mathrm{d}N^{(\ell)}_u$ is stationary, one can
replace $\E(\mathrm{d}N^{(\ell)}_u)$ by $\square_{f_0} \,\mathrm{d}u$. Moreover,
since by Proposition~\ref{momentsHawkes}, $N_{[-1,0)}$ has some
exponential moments there exists $\theta=\square_{f_0}$ such that $\E
((N_{[-1,0)}+1) \mathrm{e}^{\theta(N_{[-1,0)}+1)} )=\square_{f_0}$.
With $\llvert  \Phi \rrvert $ the size of the dictionary, this leads to
\begin{eqnarray*}
E&\leq&\square_{\beta,M,f_0} \eta\llvert \Phi \rrvert \log^2(T)
\llVert a \rrVert _{\ell_2} ^2
\\
&&{}+\square_{\beta,M,f_0} \log^2(T)\\
&&\hphantom{{}+{}}{}\times\sum_{m=1}^M
\Biggl[\sum_{\p,\rho\in\Phi
}\llvert a_\p \rrvert
\llvert a_\rho \rrvert \bigl\llvert \mu^{(m)}_\p
\bigr\rrvert \bigl\llvert \mu^{(m)}_\rho \bigr\rrvert \bigl[
\log\bigl(\bigl\llvert \mu^{(m)}_\p \bigr\rrvert \bigl
\llvert \mu_\rho^{(m)} \bigr\rrvert \bigr)-\log(\eta\theta)-1
\bigr]
\\
&&\hphantom{{}+{}\times\sum_{m=1}^M
\Biggl[}{} +\sum_{\ell=1}^M\sum
_{\p,\rho\in\Phi}\llvert a_\p \rrvert \llvert
a_\rho \rrvert \int_{0}^{1}\bigl
\llvert (g_\p)_\ell^{(m)} \bigr\rrvert \bigl
\llvert (g_\rho)_\ell^{(m)} \bigr\rrvert (u) \\
&&\hspace*{42.5pt}\hphantom{{}+\square_{\beta,M,f_0} \log^2(T)\sum_{m=1}^M
\Biggl[}{}\times\bigl[
\log\bigl(\bigl\llvert (g_\p)_\ell^{(m)} \bigr
\rrvert \bigl\llvert (g_\rho)_\ell^{(m)} \bigr
\rrvert (u)\bigr)-\log(\eta\theta)-1 \bigr] \,\mathrm{d}u \Biggr].
\end{eqnarray*}
Consequently, using $\llVert  \Phi  \rrVert _\infty$ and $r_\Phi$,
\[
E\leq\square_{\beta,M,f_0} \eta\llvert \Phi \rrvert \log^2(T)
\llVert a \rrVert _{\ell_2}^2 + \square_{\beta,M,f_0}
\log^2(T) r_\Phi\bigl[2\log\bigl(\llVert \Phi \rrVert
_\infty\bigr)-\log (\eta\theta)-1\bigr]\llVert a \rrVert
_{\ell_2} ^2.
\]
We choose $\eta=\llvert  \Phi \rrvert ^{-1}$ and obtain that
\[
E\leq\square_{\beta,M,f_0} \log^2(T) r_\Phi\bigl[\log
\bigl(\llVert \Phi \rrVert _\infty \bigr)+\log\bigl(\llvert \Phi \rrvert
\bigr)\bigr]\llVert a \rrVert _{\ell_2}^2.
\]
Now, let us choose $\delta=\omega/(\log^2(T) r_\Phi[\log(\llVert  \Phi
\rrVert _\infty)+\log(\llvert  \Phi \rrvert )])$ where $\omega$ depends only on $\beta, M $
and $f_0$ and will be chosen later and let us go back to \eqref{mamajo}:
\begin{eqnarray*}
\frac{1}{T}\bigl\llvert a'Ga-a'\E(G) a \bigr
\rrvert & \leq& \square_{\beta,M, f_0}\omega\llVert a \rrVert
_{\ell_2}^2\\
&&{} + \square_{\beta,
f_0, \omega} r_\Phi\bigl[
\log\bigl(\llVert \Phi \rrVert _\infty\bigr)
 +\log\bigl(\llvert \Phi \rrvert \bigr)\bigr] \\
&&\hphantom{   {}+{} }{}\times\sum
_{\p,\rho\in\Phi} \llvert a_\p \rrvert \llvert
a_\rho \rrvert \llVert \p \rrVert _\infty \llVert \rho
\rrVert _\infty\frac{\log^5(T)}{T}
\\
& \leq&\square_{\beta,M, f_0} \omega\llVert a \rrVert _{\ell_2}^2
+\square _{\beta, f_0, \omega} \llVert a \rrVert _{\ell_2}^2
A_\Phi(T).
\end{eqnarray*}
Under assumptions of Proposition~\ref{Qnorm}, for $T_0$ large enough
and $T\geq T_0$,
\[
\frac{1}{T}\bigl\llvert a'Ga-a'\E(G) a \bigr
\rrvert \leq\square_{\beta,M, f_0} \omega\llVert a \rrVert _{\ell_2}^2.
\]
It is now sufficient to take $\omega$ small enough and then $T_0$ large
enough to obtain \eqref{cequonveut} with $\epsilon<\zeta$ and
Proposition~\ref{Qnorm} is proved.

Arguments for the proof of Proposition~\ref{Prop:Aalen} are similar. So
we just give a brief sketch of the proof. Now,
\[
G_{\p_1,\p_2}=\sum_{m=1}^M\int
_0^1 \bigl(Y^{(m)}_t
\bigr)^2 \p_1\bigl(t,X^{(m)}\bigr) \p
_2\bigl(t,X^{(m)}\bigr) \,\mathrm{d}t.
\]
Let $\beta>0$. With probability larger than $1-2M^{-\beta}$,
\[
\frac{1}{M}\bigl\llvert G_{\p_1,\p_2}-\E[G_{\p_1,\p_2}] \bigr
\rrvert \leq\sqrt{\frac{2\beta
v_{\p_1,\p_2}\log M}{M}}+\frac{\beta b_{\p_1,\p_2}\log M}{3M},
\]
with
\begin{eqnarray*}
b_{\p_1,\p_2}&=&\| \p_1 \|_{\infty}\|\p_2
\|_{\infty},
\\
v_{\p_1,\p_2}&=&\E \biggl(\int_0^1
\bigl(Y^{(m)}_t \bigr)^2\p_1
\bigl(t,X^{(m)}\bigr) \p_2\bigl(t,X^{(m)} \bigr)\,
\mathrm{d}t \biggr)^2\leq D\|\p_1\|_{\infty}\|
\p_2\|_{\infty}\bigl\langle \llvert \p_1 \rrvert ,
\llvert \p_2 \rrvert \bigr\rangle,
\end{eqnarray*}
where $\langle\cdot,\cdot\rangle$ denotes the standard $\mathbb
{L}_2$-scalar product. We have just used the classical Bernstein
inequality combined with (\ref{condY}). So, with probability larger
than $1-2\llvert  \Phi \rrvert ^2M^{-\beta}$,
for any vector $a$ and any $\delta>0$,
\begin{eqnarray*}
\bigl\llvert a'Ga-\E\bigl[a'Ga\bigr] \bigr\rrvert &
\leq&\square_{D,\beta}\sum_{\p_1,\p_2}\llvert
a_{\p_1} \rrvert \llvert a_{\p
_2} \rrvert \bigl[\delta M \bigl
\langle\llvert \p_1 \rrvert ,\llvert \p_2 \rrvert \bigr
\rangle+\delta^{-1}\log M\|\p_1\| _\infty\|
\p_2\|_\infty\bigr]
\\
&\leq&\square_{D,\beta}\bigl(\delta M r_\Phi+
\delta^{-1}\|\Phi\|_{\infty
}^2\llvert \Phi \rrvert
\log M\bigr)\|a\|_{\ell_2}^2.
\end{eqnarray*}
We choose $\delta=\sqrt{\frac{\|\Phi\|_\infty^2\llvert  \Phi \rrvert \log M}{M r_\Phi
}}$, so that with probability larger than $1-2\llvert  \Phi \rrvert ^2M^{-\beta}$,
\[
\frac{1}{M}\bigl\llvert a'Ga-\E\bigl[a'Ga
\bigr] \bigr\rrvert \leq\square_{D,\beta}\sqrt{\frac{\|\Phi\|
_\infty^2r_\Phi\llvert  \Phi \rrvert \log M}{M}}
\|a\|_{\ell_2}^2.
\]
We use (\ref{condr}) and (\ref{cond:Aalen}) to conclude as for
Proposition~\ref{Qnorm} and we obtain Proposition~\ref{Prop:Aalen}.

\subsubsection{Proof of Corollary \texorpdfstring{\protect\ref{OmegaN}}{3}}
First, let us cut $[-1,T]$ in $\lfloor T \rfloor+2$ intervals $I$'s of
the type $[a,b)$ such that the first $\lfloor T \rfloor+1$ intervals
are of length 1 and the last one is of length strictly smaller than 1
(eventually it is just a singleton). Then, any interval of the type
$[t-1,t]$ for $t$ in $[0,T]$ is included into the union of two such
intervals. Therefore, the event where all the $N_I$'s are smaller than
$u=\mathcal{N}/2$ is included into $\Omega_\mathcal{N}$. It remains to
control the probability of the complementary of this event.
By stationarity, all the first $N_I$'s have the same distribution and
satisfy Proposition~\ref{momentsHawkes}. The last one can also be
viewed as the truncation of a stationary point process to an interval
of length smaller than 1. Therefore, the exponential inequality of
Proposition~\ref{momentsHawkes} also applies to the last interval.
It remains to apply $\lfloor T \rfloor+2$ times this exponential
inequality and to use a union bound.

\subsubsection{Proof of Corollary \texorpdfstring{\protect\ref{etpourH}}{4}}
As in the proof of Proposition~\ref{flow}, we use the notation $\square$.
The nonasymptotic part of the result is just a pure application of
Theorem~\ref{th:oraclewithd}, with the choices of $B_\p$ and $V_\p$
given by \eqref{Bphi} and \eqref{Vphi}.
The next step consists in controlling the martingale $\psi(\p)^2\bullet
(N-\Lambda)_T$ on $\Omega_{V,B}$.
To do so, let us apply \eqref{sho} to $H$ such that for any $m$,
\[
H^{(m)}_t=\psi^{(m)}_t(
\p)^2\indic_{t\leq\tau'},
\]
with $B=B_\p^2$ and $\tau=T$ and where $\tau'$ is defined in \eqref
{monT} (see the proof of Theorem~\ref{th:oraclewithd}). The assumption
to be fulfilled is checked as in the proof of Theorem~\ref{th:oraclewithd}.
But as previously, on $\Omega_{V,B}$, $H\bullet(N-\Lambda)_T=\psi(\p
)^2\bullet(N-\Lambda)_T$ and also
$H^2\bullet\Lambda_T=\psi(\p)^4\bullet\Lambda_T$. Moreover, on $\Omega
_\mathcal{N}\subset\Omega_{V,B}$
\[
H^2\bullet\Lambda_T=\psi(\p)^4\bullet
\Lambda_T \leq v:=TM\Bigl(\max_m
\nu^{(m)} +\mathcal{N}\max_{m,\ell} h_\ell^{(m)}
\Bigr) B_\p^4.
\]
Recall that $x=\alpha\log(T)$.
So on $\Omega_{V,B}$, with probability larger than
$1-(M+KM^2)\mathrm{e}^{-x}=1-(M+KM^2)T^{-\alpha}$, one has that for all $\p\in
\Phi$,
\[
\psi(\p)^2\bullet N_T \leq\psi(\p)^2\bullet
\Lambda_T +\sqrt{2vx}+\frac
{B_\p^2x}{3}.
\]
So that for all $\p\in\Phi$,
\[
\psi(\p)^2\bullet N_T \leq\square_{M,f_0} \bigl[
\mathcal{N}\llVert \p \rrVert ^2_T+\llVert \Phi \rrVert
_\infty^2\mathcal{N}^2\sqrt{T\mathcal{N}\log(T)}
\bigr].
\]
Also, since $\mathcal{N}=\log^2(T)$, one can apply Corollary~\ref
{OmegaN}, with $\beta=\alpha$. We finally choose $c$ as in Proposition~\ref{Qnorm}. This leads to the result.

\section*{Acknowledgements}
We are very grateful to Christine Tuleau-Malot who allowed us to use
her R programs simulating Hawkes processes. The research of Patricia
Reynaud-Bouret and Vincent Rivoirard is partly supported by the french
Agence Nationale de la Recherche (ANR 2011 BS01 010 01 projet
Calibration). The authors would like to thank the anonymous Associate
Editor and Referees for helpful comments and suggestions.



\printhistory

\end{document}